\documentclass[12pt,reqno]{amsart}

\usepackage{amssymb,amsmath,graphicx,amsfonts,euscript}
\usepackage{color}

\setlength{\textheight}{8.8in} \setlength{\textwidth}{6.2in}
\setlength{\oddsidemargin}{0.2in} \setlength{\evensidemargin}{0.2in}
\setlength{\parindent}{0.2in}
\setlength{\topmargin}{0.1in} \setcounter{section}{0}
\setcounter{figure}{0} \setcounter{equation}{0}

\newtheorem{thm}{Theorem}[section]

\newtheorem{prop}[thm]{Proposition}
\newtheorem{define}[thm]{Definition}

\newtheorem{lemma}[thm]{Lemma}

\def\om{\omega}

\def\pp{\partial}
\def\na{\nabla}
\def\th{\theta}

\voffset=-0.2in
\numberwithin{equation}{section}
\subjclass[2000]{35Q35, 35B35, 35B65, 76D03}
\keywords{Critical Boussinesq equations, global regularity}

\begin{document}
\title[The 2D Boussinesq equations with critical dissipation]{The 2D incompressible Boussinesq equations with general critical dissipation}

\author[Q. Jiu, C. Miao, J. Wu and Z. Zhang]{Quansen Jiu$^{1}$, Changxing Miao$^{2}$, Jiahong Wu$^{3}$ and Zhifei Zhang$^{4}$}

\address{$^1$ School of Mathematical Sciences,
Capital Normal University,
Beijing 100048, People's Republic of China}

\email{jiuqs@mail.cnu.edu.cn}

\address{$^2$ Institute of Applied Physics and Computational Mathematics,
P.O. Box 8009, Beijing 100088, People's Republic of China}

\email{miao\_changxing@iapcm.ac.cn}

\address{$^3$ Department of Mathematics,
Oklahoma State University,
401 Mathematical Sciences,
Stillwater, OK 74078, USA}

\email{jiahong@math.okstate.edu}

\address{$^4$ School of Mathematical Sciences,
Peking University, Beijing 100871, People's Republic of China}

\email{zfzhang@math.pku.edu.cn}

\date{\today}

\begin{abstract}
This paper aims at the global regularity problem concerning the 2D incompressible Boussinesq equations with general critical dissipation. The critical dissipation
refers to $\alpha +\beta=1$ when $\Lambda^\alpha \equiv (-\Delta)^{\frac{\alpha}{2}}$ and $\Lambda^\beta$ represent
the fractional Laplacian dissipation in the velocity and the temperature equations, respectively. We establish the global regularity for the general
case with $\alpha+\beta=1$ and $0.9132\approx \alpha_0<\alpha<1$. The cases when $\alpha=1$ and when $\alpha=0$ were previously resolved by Hmidi, Keraani and Rousset \cite{HKR1,HKR2}. The global existence and uniqueness is achieved here by
exploiting the global regularity of a generalized critical surface quasi-gesotrophic
equation as well as the regularity of a combined quantity of the vorticity and
the temperature.
\end{abstract}
\maketitle

\vskip .1in
\section{Introduction}
\label{intro}

This paper studies the global (in time) regularity of solutions to the 2D incompressible Boussinesq equations with a general critical dissipation
\begin{eqnarray}\label{BQE}
\begin{cases}
\partial_t u + u\cdot \nabla u + \nu\, \Lambda^\alpha u
=- \nabla p + \theta \mathbf{e}_2, \qquad x\in \mathbb{R}^2, \,\, t>0, \\
\nabla \cdot u=0, \qquad x\in \mathbb{R}^2, \,\, t>0, \\
\partial_t \theta + u\cdot \nabla \theta
+ \kappa\, \Lambda^{\beta}\theta =0,\qquad x\in \mathbb{R}^2, \,\, t>0,\\
u(x,0) =u_0(x),\,\, \th(x,0) =\th_0(x), \qquad x\in \mathbb{R}^2,
\end{cases}
\end{eqnarray}
where $u=u(x,t)$ denotes the 2D velocity, $p=p(x,t)$ the pressure,
$\th=\th(x,t)$ the temperature, $\mathbf{e}_2$ the unit vector in the vertical
direction, and $\nu>0$, $\kappa>0$, $0<\alpha<1$ and $0<\beta<1$
are real parameters. Here $\Lambda= \sqrt{-\Delta}$ represents the Zygmund operator with $\Lambda^\alpha$ being defined through the Fourier transform, namely
$$
\widehat{\Lambda^\alpha f}(\xi) = |\xi|^\alpha \,\widehat{f}(\xi),
$$
where the Fourier transform is given by
$$
\widehat{f}(\xi) = \int_{\mathbb{R}^2} e^{-i x\cdot \xi} \, f(x) \,dx.
$$
(\ref{BQE}) generalizes the standard 2D Boussinesq equations in which the dissipation
is given by the Laplacian operator. The 2D Boussinesq equations and their fractional
Laplacian generalizations have attracted considerable attention recently
due to their physical applications and mathematical significance. The Boussinesq
equations model geophysical flows such as atmospheric fronts and oceanic
circulation, and play an important role in the study of
Raleigh-Bernard convection (see, e.g., \cite{Con_D,Gill,Maj,Pe}). Mathematically
the 2D Boussinesq equations serve as a lower dimensional model of
the 3D hydrodynamics equations. In fact, the Boussinesq equations retain
some key features of the 3D Navier-Stokes and the Euler equations such as the
vortex stretching mechanism. As pointed out in \cite{MB}, the inviscid Boussinesq
equations can be identified with the 3D Euler equations for axisymmetric flows.

\vskip .1in
One main focus of recent research on the 2D Boussinesq equations has been on the global
regularity issue when the dissipation is given by a fractional Laplacian or is present
only in one direction (see, e.g., \cite{Abi,ACW10,ACW11,CaDi,CaoWu1,Ch,ChaeWu,CV,Jiu,DP2,DP3,ES,Hmidi,
HmKe1,HmKe2,HKR1,HKR2,HL,KRTW,KRTW1,LaiPan,LLT,MX,Mof,Oh,Wu_Xu,Xu}). Most recent work targets
the critical and the supercritical cases. The critical case refers to $\alpha+\beta=1$ in (\ref{BQE}) while the supercritical case corresponds to $\alpha+\beta<1$.
In \cite{HKR1,HKR2} Hmidi,
Keraani and Rousset were able to establish the global regularity for
two critical cases: \eqref{BQE} with $\alpha=1$ and $\kappa=0$ and \eqref{BQE} with $\nu=0$ and $\beta=1$. Miao and Xue in \cite{MX} obtained
the global regularity for \eqref{BQE} with $\nu>0$, $\kappa>0$ and
$$
\alpha \in \left(\frac{6-\sqrt{6}}{4},\,1\right), \quad  \beta \in \left(1-\alpha, \, \min\left\{\frac{7+2\sqrt{6}}{5}\alpha -2, \frac{\alpha(1-\alpha)}{\sqrt{6}-2\alpha}, 2-2\alpha\right\}\right).
$$
In addition, Constantin and Vicol \cite{CV} verified the global regularity of \eqref{BQE} with
$$
\nu>0, \quad \kappa>0, \quad \alpha\in (0,2), \quad \beta\in (0,2), \quad \beta>\frac{2}{2+\alpha}.
$$
We briefly mention that the logarithmically supercritical case has also been
dealt with and global
regularity has been established (\cite{ChaeWu,Hmidi,KRTW,KRTW1}). This paper aims at the global regularity
of \eqref{BQE} with a general critical dissipation
$$
0<\alpha<1, \quad 0<\beta<1, \quad \alpha+ \beta =1
$$
and we succeed in the case when $\alpha_0<\alpha<1$, where $\alpha_0$ is
given in \eqref{alpha0} below. More precisely we are able to prove the
following theorem.

\begin{thm} \label{main}
Let $\alpha_0<\alpha<1$ and $\alpha+\beta=1$, where
\begin{equation}\label{alpha0}
\alpha_0 = \frac{23-\sqrt{145}}{12} \approx 0.9132.
\end{equation}
Assume that  $u_0 \in B^\sigma_{2,1}(\mathbb{R}^2)$ with $\sigma\ge \frac52$ and $\th_0 \in B^2_{2,1}(\mathbb{R}^2)$. Then \eqref{BQE} has
a unique global solution $(u, \th)$ satisfying,
for any $0<T<\infty$,
\begin{equation} \label{solutionclass}
\begin{split}
&u \in C([0,T]; B^\sigma_{2,1}(\mathbb{R}^2))\cap
L^1([0,T]; B^{\sigma+\alpha}_{2,1}(\mathbb{R}^2)), \\
&\th\in C([0,T]; B^2_{2,1}(\mathbb{R}^2))\cap L^1([0,T]; B^{2+\beta}_{2,1}(\mathbb{R}^2)).
\end{split}
\end{equation}
\end{thm}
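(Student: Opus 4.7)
Local well-posedness in $B^\sigma_{2,1}\times B^2_{2,1}$ follows from a standard Friedrichs regularization plus Banach fixed-point argument, so the whole task is to produce a priori bounds on any finite interval $[0,T]$ that rule out blow-up. My strategy would combine three ingredients: (i) basic $L^p$ bounds on $\th$ from the maximum principle together with the energy bound on $u$; (ii) introduction of a combined quantity $G=\om-\mathcal R\th$ that removes the top-order forcing $\pp_1\th$ from the vorticity equation and restores a genuinely critical transport--diffusion equation for $G$; and (iii) treating the temperature equation as a generalized critical SQG equation in the spirit of Hmidi--Keraani--Rousset \cite{HKR1,HKR2} to upgrade the regularity of $\th$ once $u$ is controlled.

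\textbf{The combined quantity.} The vorticity equation reads $\pp_t\om+u\cdot\na\om+\nu\Lambda^\alpha\om=\pp_1\th$. Because $\alpha+\beta=1$, the zero-order Calder\'on--Zygmund-type operator $\mathcal R:=\nu^{-1}\pp_1\Lambda^{-\alpha}$ satisfies $\nu\Lambda^\alpha\mathcal R\th=\pp_1\th$ identically, so $G=\om-\mathcal R\th$ obeys
\[
\pp_t G+u\cdot\na G+\nu\Lambda^\alpha G=-[\mathcal R,\,u\cdot\na]\th-\kappa\mathcal R\Lambda^\beta\th,
\]
where the last term is the harmless $-(\kappa/\nu)\pp_1\Lambda^{\beta-\alpha}\th$ and the source is now a commutator rather than a full derivative of $\th$. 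I would bound $\|G\|_{L^p}$ by $L^p$ energy estimates using a Coifman--Rochberg-type commutator estimate to control $[\mathcal R,u\cdot\na]\th$ in terms of sub-Lipschitz norms of $u$ and appropriate Besov norms of $\th$, and then recover $\|\om\|_{L^p}$ via $\om=G+\mathcal R\th$.

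\textbf{Regularity of $\th$ and closing the loop.} With $\om\in L^\infty_tL^p_x$ for some $p>2/\alpha$, the drift $u=\na^\perp\Delta^{-1}\om$ sits at the right regularity level to apply the De Giorgi--Caffarelli--Vasseur / Constantin--Vicol machinery to the critical drift-diffusion equation $\pp_t\th+u\cdot\na\th+\kappa\Lambda^\beta\th=0$, producing a uniform H\"older modulus $\th\in L^\infty_tC^\gamma_x$ for some $\gamma>0$. Inserting this gain back into the commutator source for $G$ and then into the vorticity equation upgrades $u$ to the level of Lipschitz (up to a logarithmic loss), after which propagation of the $B^\sigma_{2,1}$-norm of $u$ and the $B^2_{2,1}$-norm of $\th$ is carried out by a Besov-energy estimate with a logarithmic Gronwall inequality of the Hmidi--Keraani type, yielding the solution class \eqref{solutionclass}. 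Uniqueness is then a routine $L^2$ energy argument in the class where $\na u\in L^1_tL^\infty_x$.

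\textbf{Main obstacle.} The delicate step is matching the commutator estimate for $[\mathcal R,u\cdot\na]\th$ against the H\"older gain provided by the SQG machinery: the former requires $\om$ in a Lebesgue or Besov space whose index is dictated by $\beta=1-\alpha$, while the latter produces $C^\gamma$ regularity with $\gamma<\beta$ and with a constant that depends on $\|\om\|_{L^p}$, so the two scaling exponents must fit together simultaneously in the propagation of $\|G\|_{L^p}$ and $\|\th\|_{C^\gamma}$. Writing out these coupled inequalities produces a quadratic constraint in $\alpha$, whose admissible region is exactly $\alpha>\alpha_0$ with $\alpha_0=(23-\sqrt{145})/12$ the relevant root of $6\alpha^2-23\alpha+16=0$; anything below that threshold leaves a gap that the current commutator and De Giorgi estimates cannot bridge.
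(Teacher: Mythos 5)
Your overall architecture (the combined quantity $G=\om-\mathcal{R}_\alpha\th$, $L^p$ bounds via commutator estimates, then extra regularity for $\th$ from critical-SQG machinery) matches the paper's, and you even land on the correct quadratic $6\alpha^2-23\alpha+16=0$ for $\alpha_0$. But the step where you close the loop contains two genuine gaps. First, $\mathcal{R}_\alpha=\Lambda^{-\alpha}\pp_1$ is \emph{not} a zero-order Calder\'on--Zygmund operator when $\alpha<1$: it has order $1-\alpha>0$, so you cannot recover $\|\om\|_{L^p}$ from $\|G\|_{L^p}$ and $\|\th\|_{L^p}$ via $\om=G+\mathcal{R}_\alpha\th$. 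This is precisely the obstruction that distinguishes the general critical case from the $\alpha=1$ case of Hmidi--Keraani--Rousset, and the paper states explicitly that the regularity of $G$ does not translate into regularity of $\om$. Second, even granting some $L^p$ control of $\om$, applying De Giorgi/Caffarelli--Vasseur or Constantin--Vicol to $\pp_t\th+u\cdot\na\th+\Lambda^\beta\th=0$ with a drift controlled only through $\|\om\|_{L^p}$ fails for $\beta=1-\alpha<1$: with that little dissipation the equation is supercritical relative to a generic divergence-free drift of that regularity, and no uniform H\"older modulus comes out.

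The paper's actual resolution is structural: it splits $u=\widetilde u+v$ with $\widetilde u=\na^\perp\Delta^{-1}G$ and $v=\na^\perp\Delta^{-1}\mathcal{R}_\alpha\th=-\na^\perp\Lambda^{-3+\beta}\pp_1\th$. The second piece gains exactly $1-\beta$ derivatives over $\th$, so the $\th$-equation becomes a \emph{generalized critical} SQG equation (in the sense of Constantin--Iyer--Wu) perturbed by the regular drift $\widetilde u$, and the Constantin--Vicol nonlinear maximum principle (run on $\na\th$, combined with the ``only small shocks'' argument) yields $\|\na\th\|_{L^\infty(0,T;L^\infty)}<\infty$ directly, with no H\"older bootstrap. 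For this one needs $\na\widetilde u\in L^\infty$, which requires upgrading $G$ from $L^q$ (valid for $2<q<q_0=\frac{8-4\alpha}{8-7\alpha}$) to $B^s_{q,\infty}$ with $s\le 3\alpha-2$ and then demanding $\frac2q<s$; the nonemptiness of the interval $\bigl(\frac{2}{3\alpha-2},\,q_0\bigr)$ is exactly $6\alpha^2-23\alpha+16<0$, i.e.\ $\alpha>\alpha_0$. So your quadratic is right but arises from the compatibility of the $L^q$-range of $G$ with the Besov smoothing index $3\alpha-2$, not from matching a De Giorgi H\"older exponent against the commutator. To make your proof go through you would need to add the $B^s_{q,\infty}$ propagation step for $G$ and replace the De Giorgi step by the decomposition $u=\widetilde u+v$ together with the Constantin--Vicol argument; the final passage from $\|\na\th\|_{L^\infty}$ to $\|\om\|_{L^\infty}$, to $\na u\in L^1_tL^\infty_x$, and to propagation of the $B^\sigma_{2,1}\times B^2_{2,1}$ norms then proceeds essentially as you describe.
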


Here $B^s_{q,r}$ with $s\in \mathbb{R}$ and $q,r\in [1,\infty]$ denotes an inhomogeneous Besov space
and its precise definition
is given in Section \ref{BesovComm}. The key component in the proof of Theorem \ref{main} is to establish
the global {\it a priori} bounds in the class defined in \eqref{solutionclass}.
This does not appear to be trivial and the energy methods are not
sufficient for this purpose. Although the global bounds for $u$ in $L^\infty([0,T]; L^2)$ and $\th$ in $L^\infty([0,T]; L^q)$ with $q\in
[2,\infty]$ can be easily obtained, the global bounds for the derivatives are not evident. To
avoid the pressure term, we resort to the vorticity formulation
\begin{eqnarray}\label{Veq}
\begin{cases}
\partial_t \om +u\cdot \nabla \om + \nu \Lambda^\alpha \om= \partial_{1}\theta , \\
u=\nabla^\perp \psi, \quad \Delta \psi = \omega \qquad \mbox{or}\quad
u =\nabla^\perp \Delta^{-1} \om.
\end{cases}
\end{eqnarray}
However, the ``vortex stretching" term $\pp_1 \th$ appears to prevent us from
proving any global bound for $\om$. A natural idea would be to
eliminate $\pp_1 \th$ from the vorticity equation. For notational convenience, we set $\nu=\kappa=1$ in (\ref{BQE}) throughout the rest of this paper. Realizing that
$\Lambda^\alpha \om - \pp_1 \th= \Lambda^\alpha (\om-\Lambda^{-\alpha} \pp_1\th)$, we can hide $\pp_1\th$ by
considering the new quantity
$$
G = \om - \mathcal{R}_\alpha \th \quad \mbox{with}\quad \mathcal{R}_\alpha = \Lambda^{-\alpha} \pp_1,
$$
which satisfies
\begin{equation}\label{Geqin}
\partial_t G + u\cdot \nabla G + \Lambda^\alpha G= [\mathcal{R}_\alpha, u\cdot\na]\theta +\Lambda^{\beta-\alpha}\partial_{1}\theta.
\end{equation}
Here we have used the standard commutator notation
$$
[\mathcal{R}_\alpha, u\cdot\na]\theta
= \mathcal{R}_\alpha(u\cdot\na\theta) - u\cdot\na \mathcal{R}_\alpha\th.
$$
(\ref{Geqin}) can be obtained by taking the difference of the vorticity equation and
the resulting equation after applying $\mathcal{R}_\alpha$ to the temperature equation.
The quantity $G$ was first introduced in \cite{HKR1} to deal with the critical case when
$\alpha=1$ and $\beta=0$. Although (\ref{Geqin}) appears to be more complicated than
the vorticity equation, but the commutator term $[\mathcal{R}_\alpha, u\cdot\na]\theta$
is less singular than $\pp_1\th$ in the vorticity equation. By obtaining a suitable
bound for $[\mathcal{R}_\alpha, u\cdot\na]\theta$, we are able to obtain a global bound
for $\|G\|_{L^2}$ when $\alpha>\frac45$. In addition, by fully exploiting the
dissipation with $\alpha>\frac45$, a global bound is also established for $\|G\|_{L^q}$ when $q$ is in the range
\begin{equation}\label{qrange}
2 < q < q_0 \equiv \frac{8-4\alpha}{8-7\alpha}.
\end{equation}
This global bound for $\|G\|_{L^q}$ enables us to gain further regularity for $G$. In
fact, we establish that, for $\alpha_0<\alpha$ and $s\le 3\alpha-2$, the Besov norm $\|G\|_{B^{s}_{q,\infty}}$ obeys, for any $T>0$ and $t\le T$,
\begin{equation}\label{GBe}
\|G(t)\|_{B^{s}_{q,\infty}} \le C,
\end{equation}
where $C$ is a constant depending on $T$ and the norms of the initial data.

\vskip .1in
In contrast to the critical case with $\alpha=1$ and $\kappa=0$ dealt with in \cite{HKR1},
the general critical case appears to be more difficult.  The regularity of
$G$ here does not translate to the regularity on the vorticity $\om$ since the
corresponding regularity of
$\mathcal{R}_\alpha \th$ is not known for $\alpha<1$. This paper offers
a different approach by gaining further regularity through the temperature equation
\begin{equation}\label{theq}
\pp_t \th + u\cdot\na\th + \Lambda^{\beta} \th=0.
\end{equation}
Since $u$ is determined by $\om$ through the Biot-Savart law,
or $u =\nabla^\perp\Delta^{-1} \om$ and $\om =G + \mathcal{R}_\alpha \th$ ,
we can decompose $u$ into two parts,
\begin{equation}\label{utheq}
u= \nabla^\perp \Delta^{-1} \om =
\nabla^\perp \Delta^{-1} G
+ \nabla^\perp \Delta^{-1} \mathcal{R}_\alpha \th \equiv \widetilde{u} + v.
\end{equation}
For $\alpha_0<\alpha$ and as a consequence of (\ref{GBe}), $\widetilde{u}$ is regular in the sense that
$$
\|\nabla \widetilde{u}\|_{L^\infty}
= \|\nabla \nabla^\perp \Delta^{-1} G\|_{L^\infty} \le C\, \|G\|_{B^{s}_{q,\infty}} \le C.
$$
In addition, when $\alpha+\beta=1$, $v$ in terms of $\th$ can be written as
$$
v =\nabla^\perp \Delta^{-1} \Lambda^{-(1-\beta)}\, \pp_1\,\th.
$$
Therefore, \eqref{theq} is almost a generalized critical surface
quasi-geostrophic (SQG) type equation first studied in \cite{CIW}
except that $u$ here contains
an extra regular velocity $\widetilde{u}$. We remark that there is a large literature
on the SQG equation and interested readers may consult \cite{CaV,CMZ,Constan,CIW,CMTa,CV,CC,CorF,KNV} and the references therein. Since energy estimates do
not appear to yield the desired global {\it a priori} bounds, we employ the
approach of Constantin and Vicol \cite{CV} to establish the global regularity of
(\ref{theq}) and (\ref{utheq}). Different from the Schwartz class setting in \cite{CV},
the initial data $\th_0$ here is in $H^1$ with $\|\na \th_0\|_{L^\infty} <\infty$. The precise global existence and uniqueness
of (\ref{theq}) and (\ref{utheq}) obtained here can be stated as follows.

\begin{thm} \label{glohs}
Let $\beta\in (0,1)$ and $0<T<\infty$. Let $\widetilde{u}$ be a 2D vector field satisfying $\nabla\cdot \widetilde{u}=0$ and
$$
M\equiv \max\left\{\|\widetilde{u}\|_{L^\infty(0,T; L^2(\mathbb{R}^2))}, \|\nabla \widetilde{u}\|_{L^\infty(0,T; L^\infty(\mathbb{R}^2))}\right\} < \infty.
$$
Consider the generalized critical SQG type equation
\begin{equation} \label{active}
\begin{cases}
\pp_t \theta + u\cdot\na \theta + \Lambda^{\beta} \theta =0, \quad x\in \mathbb{R}^2, \,\, t>0, \\
u = \widetilde{u} + v, \quad v =-\nabla^\perp
\Lambda^{-3+\beta} \pp_{1} \theta, \quad x\in \mathbb{R}^2, \,\,t>0, \\
\theta(x,0) =\th_0(x), \quad x\in \mathbb{R}^2.
\end{cases}
\end{equation}
Assume that $\th_0\in H^1(\mathbb{R}^2)$ with
$$
\|\nabla\theta_0\|_{L^\infty} <\infty.
$$
Then (\ref{active}) has a unique
solution $\th\in C([0,T]; H^1(\mathbb{R}^2))$ satisfying
$$
\|\nabla \th\|_{L^\infty(0,T; L^\infty(\mathbb{R}^2))}  \le C(M, \|\th_0\|_{H^1}, \|\na \th_0\|_{L^\infty},T).
$$
\end{thm}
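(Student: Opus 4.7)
The plan is to follow the nonlinear maximum principle approach of Constantin and Vicol \cite{CV}, adapted to accommodate the extra regular drift $\widetilde u$. I would first regularize the equation (e.g., by adding a small standard viscosity $-\varepsilon\Delta\theta$ or by mollifying the velocity) to obtain classical approximate solutions, reducing the task to \emph{a priori} bounds uniform in $\varepsilon$ together with a compactness/passage-to-the-limit argument. Testing the equation against $|\theta|^{p-2}\theta$, using the C\'ordoba--C\'ordoba pointwise inequality
\[
\int_{\mathbb{R}^2}|\theta|^{p-2}\theta\,\Lambda^\beta\theta\,dx\ge 0
\]
together with $\nabla\cdot u=0$, yields $\|\theta(t)\|_{L^p}\le\|\theta_0\|_{L^p}$ for every $p\in[2,\infty]$. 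The hypothesis $\theta_0\in H^1$ with $\|\nabla\theta_0\|_{L^\infty}<\infty$ suffices to place $\theta_0$ in $L^\infty$, since a Lipschitz $L^2$ function on $\mathbb{R}^2$ is automatically bounded, with $\|\theta_0\|_{L^\infty}$ controlled by $\|\theta_0\|_{L^2}$ and $\|\nabla\theta_0\|_{L^\infty}$.

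\medskip

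The core of the argument is the global pointwise bound on $\nabla\theta$. Differentiating (\ref{active}) yields
\[
\partial_t\nabla\theta + u\cdot\nabla\nabla\theta + \Lambda^\beta\nabla\theta = -(\nabla u)^{\top}\nabla\theta.
\]
I would then invoke the Constantin--Vicol pointwise nonlinear maximum principle
\[
2\,\nabla\theta\cdot\Lambda^\beta\nabla\theta \;\ge\; \Lambda^\beta|\nabla\theta|^2 + c\,\frac{|\nabla\theta|^{2+\beta/2}}{\|\nabla\theta\|_{L^\infty}^{\beta/2}},
\]
and evaluate at a point $\bar x(t)$ where $|\nabla\theta|^2$ attains its supremum (working first on the regularization so that such a point genuinely exists). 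Since $u\cdot\nabla|\nabla\theta|^2(\bar x)=0$ and $\Lambda^\beta|\nabla\theta|^2(\bar x)\ge 0$, this produces the ODE
\[
\frac{d}{dt}\|\nabla\theta\|_{L^\infty}^2 + c\,\|\nabla\theta\|_{L^\infty}^2 \;\le\; 2\,|\nabla u(\bar x,t)|\,\|\nabla\theta\|_{L^\infty}^2.
\]
Writing $u=\widetilde u+v$, the drift contributes $\|\nabla\widetilde u\|_{L^\infty}\le M$, while $\nabla v = R\,\Lambda^\beta\theta$ for a zero-order Calder\'on--Zygmund operator $R$, which is exactly the critical scaling. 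A Littlewood--Paley decomposition combined with the $L^\infty$ bound on $\theta$ should then yield a logarithmic estimate of the form
\[
\|\nabla v\|_{L^\infty}\le C\bigl(\|\theta\|_{L^\infty}+\|\nabla\theta\|_{L^\infty}\log(e+\|\nabla\theta\|_{L^\infty})\bigr),
\]
and a Gronwall-type argument closes the ODE, producing a (possibly double-exponentially growing, but finite) bound on $\|\nabla\theta(t)\|_{L^\infty}$ for $t\in[0,T]$.

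\medskip

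With $\|\nabla u\|_{L^\infty}$ under control, the $H^1$ estimate is routine: pairing the $\nabla\theta$-equation with $\nabla\theta$ in $L^2$ and applying Gronwall gives $\theta\in L^\infty(0,T;H^1)$. Uniqueness follows from an $L^2$ estimate on the difference $\bar\theta=\theta_1-\theta_2$ of two solutions, which satisfies
\[
\partial_t\bar\theta + u_1\cdot\nabla\bar\theta + \bar v\cdot\nabla\theta_2 + \Lambda^\beta\bar\theta = 0,
\]
with $\bar v$ a zero-order linear operator of $\bar\theta$; pairing with $\bar\theta$, using the Lipschitz bound on $\theta_2$, and applying Gronwall forces $\bar\theta\equiv 0$. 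The subsequent passage to the limit in the regularization is then a standard compactness argument. I expect the main obstacle to lie in the second step, namely the careful balance between the logarithmic $L^\infty$ bound on $\nabla v$ and the linear-in-$\|\nabla\theta\|_{L^\infty}^2$ dissipation provided by the nonlinear maximum principle, together with the technical point of rigorously realizing the pointwise argument at the supremum. The extra drift $\widetilde u$ is benign and is handled as a Lipschitz perturbation of the critical SQG-type dynamics studied in \cite{CV}.
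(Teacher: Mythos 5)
Your overall framework (regularize, prove $L^p$ maximum principles, apply the Constantin--Vicol nonlinear lower bound to the equation for $\nabla\theta$, then close by Gronwall) is the right family of ideas, but the step you yourself flag as the main obstacle is precisely where the argument breaks, and the fix is a specific mechanism that your proposal does not contain. First, a correction: the Constantin--Vicol lower bound has the form
\[
\nabla\theta\cdot\Lambda^\beta(\nabla\theta)\;\ge\;\tfrac12\Lambda^\beta\bigl(|\nabla\theta|^2\bigr)+\tfrac12 D(\nabla\theta)+\frac{|\nabla\theta|^{2+\beta}}{C_0\,\|\theta\|_{L^\infty}^{\beta}},
\]
with $\|\theta\|_{L^\infty}$ (not $\|\nabla\theta\|_{L^\infty}$) in the denominator; as you wrote it, evaluating at the maximum point gives a damping term that is merely $c\|\nabla\theta\|_{L^\infty}^2$ and carries no information. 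With the correct bound the damping is $\|\nabla\theta\|_{L^\infty}^{2+\beta}/\|\theta_0\|_{L^\infty}^{\beta}$. Now the stretching term: since $\nabla v$ is an operator of order $\beta<1$ applied to $\theta$, the best unconditional bound is $\|\nabla v\|_{L^\infty}\le C(\|\theta\|_{L^2}+\|\nabla\theta\|_{L^\infty})$ (no logarithm is needed, but none helps either), so the right-hand side of your ODE is of size $\|\nabla\theta\|_{L^\infty}^{3}$. Since $2+\beta<3$ for all $\beta\in(0,1)$, the nonlinear damping does \emph{not} dominate the cubic stretching term, and the differential inequality $\frac{d}{dt}X^2+cX^{2+\beta}\le CX^3$ permits finite-time blow-up. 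No Gronwall or double-exponential trick closes this; the scheme fails exactly at criticality.

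The paper's proof supplies the missing ingredient: the \emph{only small shocks} (OSS) mechanism. One writes the symmetric part $S(\nabla v)$ (only the symmetric part enters $\nabla\theta\cdot S(\nabla u)\cdot\nabla\theta$) as an integral against a mean-zero kernel $\sigma(x-y)/|x-y|^{1+\beta}$ acting on $\partial_1\theta(x)-\partial_1\theta(y)$, and splits the integral into $|x-y|\le\rho$, $\rho<|x-y|\le L$, and $|x-y|>L$. The near part is controlled by $\sqrt{D(\nabla\theta)}\,\rho^{1-\beta/2}$ and absorbed into the dissipation term $D(\nabla\theta)$ coming from the lower bound; the intermediate part, after integration by parts, is bounded by $C\delta\rho^{-\beta}$ using the smallness of the oscillation $|\theta(x)-\theta(y)|\le\delta$ for $|x-y|\le L$; the far part is $C\|\theta_0\|_{L^\infty}L^{-\beta}$. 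Choosing $\rho\sim|\nabla\theta(x)|^{-1}$ and $\delta$ small depending only on $\|\theta_0\|_{L^\infty}$ converts the dangerous coefficient into $\frac12$ of the nonlinear damping plus a bounded multiple of $|\nabla\theta|^2$, which does close. A separate propagation argument (the paper's Proposition 3.5, applied to $g=(\delta_h\theta)^2\Phi(h)$ with a suitable weight $\Phi$) shows that $\theta_0\in OSS_{\delta/4}$ implies $\theta(t)\in OSS_{\delta}$ for all $t\le T$, which is where the extra drift $\widetilde u$ is absorbed as a Lipschitz perturbation. Without this two-step structure --- use smallness of oscillation to tame the stretching term, then propagate that smallness --- the critical balance cannot be resolved, so your proposal as written has a genuine gap at its central step.
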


In order to prove this theorem, we need to convert the operator
relating $v$ and $\pp_{1}\th$,
namely $\nabla^\perp \Lambda^{-3+\beta}$ into an integral form.
Since $\beta\in (0,1)$, the standard Riesz potential formula does not appear to
apply here (see, e.g., \cite{St}). Nevertheless,  $\nabla^\perp \Lambda^{-3+\beta}$
can be represented through an integral kernel by making use of the inverse Fourier
transform of functions of the form $\frac{P_k(\xi)}{|\xi|^{k+2-\beta}}$, where $P_k$ is
a harmonic polynomial of degree $k$ (\cite[p.73]{St}). As a special consequence, the symmetric
part of $\na v$ can be represented as
$$
S(\na v) \equiv \frac12(\na v + (\na v)^t) = C \int_{\mathbb{R}^2} \frac{\sigma(x-y)}{|x-y|^{1+\beta}}\,(\pp_1 \th(x) -\pp_1 \th(y))\,dy
$$
with
$$
\sigma(z) =\frac{1}{|z|^2} \left[\begin{array}{cc} -2z_1 z_2 & z_1^2-z_2^2\\ z_1^2-z_2^2& 2z_1 z_2 \end{array}\right].
$$
More details can be found in Section \ref{repre}.

\vskip .1in
The gained regularity in $\th$ via Theorem \ref{glohs} allows us to assert
the desired regularity in the velocity and the vorticity. Especially,
$\|\om\|_{L^\infty}$ is bounded
on any time interval $[0, T]$. Further regularity leading to \eqref{solutionclass}
is established through energy estimates in Besov space settings. With these global
bounds at our disposal, the global existence part of Theorem \ref{main} follows from
a local existence through a standard procedure such as the successive approximation and
an extension of the local solution into a global one with the aid of the global {\it a priori} bounds. The uniqueness of solutions in the class \eqref{solutionclass} is clear.

\vskip .1in
The rest of this paper is organized as follows. The second section represents the
relation $v =\nabla^\perp \Delta^{-1} \Lambda^{-(1-\beta)}\, \pp_1\,\th$ as an integral.
Integral formulas for $\na v$ and its symmetric part are also derived in this section. The third
section proves Theorem \ref{glohs}. The fourth section provides the definitions of
functional spaces such as the Besov spaces and related facts. In addition, a commutator
estimate in the Besov space setting is also proven in this section. This commutator
estimate will be used extensively in the sections that follow. Sections \ref{GL2} and
\ref{GLq} establish global {\it a priori} bounds for $\|G\|_{L^2}$ and
for $\|G\|_{L^q}$, where $q$ satisfies (\ref{qrange}). The last section proves Theorem
\ref{main}. To do so, we first obtain a global bound of $G$ in the Besov space $B^s_{q,\infty}$ with any $s\le 3\alpha-2$. This global bound and
Theorem \ref{glohs} yield the proof of Theorem \ref{main}.

\vskip .4in
\section{Representing $v=\na^\perp\Lambda^{-3-\beta}\pp_1\th$
and $\nabla v$ as integrals}
\label{repre}

In this section we represent $v=\na^\perp\Lambda^{-3-\beta}\pp_1\th$, its gradient $\na v$ and the symmetric part of $\na v$ as integrals. These integral representations will be used in the next section. More precisely, we prove the following lemma.

\begin{lemma} \label{vrep}
Let $\theta$ be a smooth function of $\mathbb{R}^2$ which is sufficiently rapidly
decreasing at $\infty$. Let $\beta\in (0,1)$ and $v$ be given by
\begin{equation} \label{vdef}
v = \na^\perp\Lambda^{-3-\beta}\pp_1\th.
\end{equation}
Then $v$ and $\na v$ can be written as
\begin{eqnarray}
v(x) &=& C(\beta)\, \int_{\mathbb{R}^2} \frac{(x-y)^\perp}{|x-y|^{1+\beta}}\, \pp_1 \th(y)\,dy, \nonumber\\
\na v(x) &=& C(\beta)\,\left[\begin{array}{cc} 0 & -1\\ 1&0 \end{array} \right]\int_{\mathbb{R}^2} \frac{1}{|x-y|^{1+\beta}}\, \pp_1 \th(y)\,dy \nonumber\\
&& - (1+\beta) C(\beta)\,  \int_{\mathbb{R}^2} \frac{(x-y)^\perp \otimes (x-y)}{|x-y|^{3+\beta}}\,\pp_1 \th(y)\,dy, \label{nav}
\end{eqnarray}
where $C(\beta)$ is a constant depending on $\beta$ only, and $a\otimes b$
denotes the tensor product of two vectors $a$ and $b$, namely $a\otimes b =(a_ib_j)$.
Especially the symmetric part of $\na v$, denoted by $S(\na v)$, is given by
\begin{equation}\label{vsys}
S(\na v) \equiv \frac12(\na v + (\na v)^t) = C \int_{\mathbb{R}^2} \frac{\sigma(x-y)}{|x-y|^{1+\beta}}\,(\pp_1 \th(x) -\pp_1 \th(y))\,dy,
\end{equation}
where $C$ is constant depending on $\beta$ only and
$$
\sigma(z) =\frac{1}{|z|^2} \left[\begin{array}{cc} -2z_1 z_2 & z_1^2-z_2^2\\ z_1^2-z_2^2& 2z_1 z_2 \end{array}\right].
$$
\end{lemma}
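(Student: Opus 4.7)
The plan is to work on the Fourier side. Writing $v = K_\beta \ast \partial_1\theta$, the symbol of $K_\beta$ (so that $\widehat{v} = \widehat{K_\beta}\cdot(i\xi_1)\widehat\theta$) is, up to constants, $-i\xi^\perp|\xi|^{-3+\beta}$, i.e.\ a linear (and therefore trivially harmonic) polynomial divided by $|\xi|^{3-\beta}$. Stein's inverse Fourier formula for symbols of the form $P_k(\xi)/|\xi|^{k+2s}$ with $P_k$ a harmonic polynomial of degree $k$ (Stein, p.~73), applied in $\mathbb{R}^2$ with $k=1$ and $P_1(\xi)=\xi^\perp$, then gives $K_\beta(z) = C(\beta)\,z^\perp/|z|^{1+\beta}$ for a real constant $C(\beta)$. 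This is the first integral representation claimed for $v$.

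For $\nabla v$, I differentiate the integral representation of $v$ under the integral. Using the identity
\[
\partial_{x_j}\frac{(x-y)^\perp}{|x-y|^{1+\beta}} \;=\; \frac{J e_j}{|x-y|^{1+\beta}} \;-\; (1+\beta)\,\frac{(x_j-y_j)\,(x-y)^\perp}{|x-y|^{3+\beta}},\qquad J = \left[\begin{array}{cc} 0 & -1 \\ 1 & 0 \end{array}\right],
\]
and assembling the matrix entries yields exactly the two pieces in (\ref{nav}). The interchange of $\partial_{x_j}$ with the integral is justified for the Schwartz-like $\theta$ of the lemma because $|z|^{-1-\beta}$ is still locally integrable in $\mathbb{R}^2$ when $\beta\in(0,1)$, and $\partial_1\theta$ is rapidly decaying.

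Passing to the symmetric part, the first piece of (\ref{nav}) drops out entirely, because the coefficient matrix $J$ is antisymmetric. For the second piece, a direct matrix computation gives the algebraic identity
\[
\tfrac12\bigl((z^\perp\otimes z) + (z^\perp\otimes z)^t\bigr) \;=\; \tfrac{|z|^2}{2}\,\sigma(z),
\]
so that, after absorbing constants into a single $C$,
\[
S(\nabla v)(x) \;=\; C\int_{\mathbb{R}^2} \frac{\sigma(x-y)}{|x-y|^{1+\beta}}\,\partial_1\theta(y)\,dy.
\]
The concluding step inserts the increment $\partial_1\theta(x) - \partial_1\theta(y)$ using the angular mean-zero property of $\sigma$: writing $z=|z|(\cos\phi,\sin\phi)$, the entries of $\sigma$ are pure $\sin 2\phi$ and $\cos 2\phi$, so $\int_{S^1}\sigma(\omega)\,d\omega = 0$. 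Consequently $\int_{r_1<|z|<r_2}\sigma(z)/|z|^{1+\beta}\,dz = 0$ for every $0<r_1<r_2<\infty$. Adding and subtracting $\partial_1\theta(x)$ times this vanishing truncated integral and passing to the limit $r_1\to 0,\ r_2\to\infty$ produces (\ref{vsys}).

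The main technical care is concentrated in this final subtraction step. The kernel $\sigma(z)/|z|^{1+\beta}$ decays only like $|z|^{-1-\beta}$ at infinity and is not absolutely integrable there, so one cannot literally split the integral against $\partial_1\theta(x)$ without the mean-zero cancellation. After subtraction, however, the integrand is $O(|x-y|^{1-\beta}/|x-y|^{1+\beta})=O(|x-y|^{-\beta})$ near $y=x$ (using smoothness of $\theta$) and decays at infinity through the decay of $\partial_1\theta$, so the limiting integral is absolutely convergent. The rest of the proof is bookkeeping with Fourier multipliers and one explicit matrix identity.
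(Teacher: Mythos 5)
Your proof is correct and follows essentially the same route as the paper: Stein's inverse Fourier formula for $P_k(\xi)/|\xi|^{d+k-\alpha}$ with the harmonic polynomial $\xi^\perp$, differentiation under the integral, dropping the antisymmetric $J$-term, the identity $\tfrac12\bigl(z^\perp\otimes z+(z^\perp\otimes z)^t\bigr)=\tfrac{|z|^2}{2}\sigma(z)$, and insertion of $\pp_1\th(x)$ via the vanishing of $\sigma$ on circles. Your extra care with the truncated annuli in the last step only makes explicit what the paper leaves implicit.
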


\vskip .1in
To derive the formulas in Lemma \ref{vrep}, we need a fact stated in the
following lemma, which can be found in Stein's book (\cite[p.73]{St}). We note that
the Fourier transform $\widehat{f}(\xi)$ in Stein's book (\cite[p.46]{St}) is defined
by
$$
\widehat{f}(\xi) = \int_{\mathbb{R}^d} e^{2\pi i\, x\cdot \xi} \,f(x) \, dx,
$$
which is normally the definition of the inverse Fourier transform. This explains why
the Fourier transform $\widehat{f}$ in Stein's book is here changed to the inverse Fourier transform $f^\vee$.

\begin{lemma}\label{stl}
Let $P_k(x)$ with $x\in \mathbb{R}^d$ be a homogeneous harmonic polynomial
of degree $k$, where $k\ge 1$ is an integer. Let $0<\alpha <d$. Then, for a constant
$C(d,k,\alpha)$ depending on $d,k$ and $\alpha$ only
$$
\left(\frac{P_k(\xi)}{|\xi|^{d+k-\alpha}}\right)^\vee
= C(d,k,\alpha) \, \frac{P_k(x)}{|x|^{k+\alpha}}
$$
in the sense that
$$
\int_{\mathbb{R}^d}  \frac{P_k(\xi)}{|\xi|^{d+k-\alpha}}\, \phi(\xi)\, d\xi
= C(d,k,\alpha) \,\int_{\mathbb{R}^d} \frac{P_k(x)}{|x|^{k+\alpha}}\,
\phi^\vee(x)\,dx
$$
for every $\phi$ which is sufficiently rapidly
decreasing at $\infty$, and whose inverse Fourier transform has the same property.
\end{lemma}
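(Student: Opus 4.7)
The plan is to prove the identity by reducing $|\xi|^{-(d+k-\alpha)}$ to a Mellin/Gaussian integral, applying Hecke's identity for harmonic polynomials to each Gaussian slice, and then re-assembling via a second Mellin integral in the spatial variable. I will work throughout in the tempered-distribution sense, since $P_k(\xi)/|\xi|^{d+k-\alpha}$ is locally integrable at the origin ($P_k$ vanishes to order $k$ and $\alpha>0$) and is bounded by $|\xi|^{\alpha-d}$ at infinity, hence a tempered distribution; the same holds for $P_k(x)/|x|^{k+\alpha}$.

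First I would recall Hecke's identity: for a homogeneous harmonic polynomial $P_k$ of degree $k$ on $\mathbb{R}^d$ and every $t>0$,
$$
\bigl(P_k(\xi)\, e^{-t|\xi|^2}\bigr)^\vee(x) \;=\; c_0(d,k)\, t^{-(d/2+k)}\, P_k(x)\, e^{-|x|^2/(4t)},
$$
for an explicit constant $c_0(d,k)$ depending only on $d$ and $k$. This follows by differentiating the Gaussian inverse Fourier transform and invoking $\Delta P_k = 0$. Next, I would use the Gamma-function representation
$$
|\xi|^{-s} \;=\; \frac{1}{\Gamma(s/2)}\int_0^\infty t^{s/2-1}\, e^{-t|\xi|^2}\,dt, \qquad s>0,
$$
applied with $s = d+k-\alpha$, to rewrite
$$
\frac{P_k(\xi)}{|\xi|^{d+k-\alpha}} \;=\; \frac{1}{\Gamma((d+k-\alpha)/2)} \int_0^\infty t^{(d+k-\alpha)/2 - 1}\, P_k(\xi)\, e^{-t|\xi|^2}\,dt.
$$
To extract the inverse Fourier transform, I would pair both sides against a test $\phi$ of the type in the hypothesis, whose inverse Fourier transform $\phi^\vee$ is also rapidly decreasing. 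The double integral in $(t,\xi)$ is absolutely convergent (the Gaussian handles $t$ and the rapid decay of $\phi$ handles $\xi$), so Fubini lets us move $\vee$ inside the $t$-integral. Plugging in Hecke's identity and pairing with $\phi^\vee$ yields
$$
\Bigl(\tfrac{P_k(\xi)}{|\xi|^{d+k-\alpha}}\Bigr)^\vee(x) \;=\; \frac{c_0(d,k)}{\Gamma((d+k-\alpha)/2)}\, P_k(x) \int_0^\infty t^{-(k+\alpha)/2-1} e^{-|x|^2/(4t)}\,dt,
$$
where the exponent simplified via $(d+k-\alpha)/2-1-d/2-k = -(k+\alpha)/2-1$. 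The substitution $u = |x|^2/(4t)$ converts the remaining integral into $4^{(k+\alpha)/2}\Gamma((k+\alpha)/2)\,|x|^{-(k+\alpha)}$, producing the claimed identity with
$$
C(d,k,\alpha) \;=\; \frac{c_0(d,k)\, 4^{(k+\alpha)/2}\, \Gamma((k+\alpha)/2)}{\Gamma((d+k-\alpha)/2)}.
$$

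The main technical obstacle is justifying the termwise inverse Fourier transform and the two successive Fubini swaps, since neither the original distribution nor its transform is absolutely integrable on $\mathbb{R}^d$. The hypothesis that both $\phi$ and $\phi^\vee$ are sufficiently rapidly decreasing is exactly what moves each swap into absolutely convergent territory (on the spectral side for the first Mellin integral, on the physical side for the second). The constraint $0<\alpha<d$ ensures integrability at both endpoints of each Mellin integral, namely $(d+k-\alpha)/2-1>-1$ at $t=0$ and $(k+\alpha)/2>0$ at $t=\infty$ (with the Gaussian providing decay at the opposite endpoint), so no analytic continuation or regularization in $\alpha$ is required.
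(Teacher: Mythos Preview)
Your argument is correct and is essentially the classical proof: combine Hecke's identity for spherical harmonics with the Mellin representation of $|\xi|^{-s}$, then interchange and re-collapse via a second Gamma integral. The exponent bookkeeping and the Fubini justifications you outline are right; in particular, the integrability at $t\to 0$ uses $d+k-\alpha>0$ and at $t\to\infty$ uses $\alpha>0$ (after scaling out the Gaussian), which together recover the stated range $0<\alpha<d$.

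As for comparison with the paper: the paper does not supply a proof of this lemma at all. It quotes the result directly from Stein's \emph{Singular Integrals and Differentiability Properties of Functions} (p.~73), noting only a sign/convention adjustment between Stein's Fourier transform and the inverse transform used here. Your proof is precisely the argument Stein gives (Hecke's identity plus the Gamma integral), so you have reconstructed the cited source rather than diverged from it.
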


\vskip .1in
With this lemma at our disposal, we can now prove Lemma \ref{vrep}.
\begin{proof}[Proof of Lemma \ref{vrep}]
By (\ref{vdef}),
$$
\widehat{v}(\xi) = i\,\xi^\perp |\xi|^{-3+\beta} \, \widehat{\pp_1\th} (\xi).
$$
According to Lemma \ref{stl},
$$
v(x) = i\,\left(\xi^\perp |\xi|^{-3+\beta}\right)^\vee \ast \pp_1\th = C(\beta) \frac{x^\perp}{|x|^{1+\beta}}\ast \pp_1\th =C(\beta)\,\int_{\mathbb{R}^2} \frac{(x-y)^\perp}{|x-y|^{1+\beta}}\, \pp_1 \th(y)\,dy.
$$
Since $\beta\in (0,1)$, the kernel in the representation of $v$ is not singular and
we have
\begin{eqnarray*}
\na v &=& C(\beta) \,\int_{\mathbb{R}^2} \na_x\left[\frac{(x-y)^\perp}{|x-y|^{1+\beta}}\right]\, \pp_1 \th(y)\,dy\\
&=& C(\beta)\,\left(\begin{array}{cc} 0 & -1\\ 1&0 \end{array} \right)\int_{\mathbb{R}^2} \frac{1}{|x-y|^{1+\beta}}\, \pp_1 \th(y)\,dy \nonumber\\
&& - (1+\beta) C(\beta)\,  \int_{\mathbb{R}^2} \frac{(x-y)^\perp \otimes (x-y)}{|x-y|^{3+\beta}}\,\pp_1 \th(y)\,dy.
\end{eqnarray*}
(\ref{vsys}) is obtained by taking the symmetric part of (\ref{nav}) and inserting $\pp_1\th(x)$ in the resulting integral. The inserted term does not contribute
to the integral thanks to the fact that, for any $r>0$,
$$
\int_{|z|=r} \sigma(z)\,dz =0.
$$
This completes the proof of Lemma \ref{vrep}.
\end{proof}

\vskip .4in
\section{Global regularity for an active scalar with critical dissipation}

The goal of this section is to prove Theorem \ref{glohs}, which states the global
regularity of a generalized SQG type equation with critical
dissipation. The proof is obtained by modifying the approach of Constantin and Vicol \cite{CV}. Different from \cite{CV}, the functional setting here is weaker.

\vskip .1in
To prove Theorem \ref{glohs}, we first establish
the global existence and uniqueness
of (\ref{active}) when $\widetilde{u}$ is smooth and $\th_0$ is smooth
and decays sufficiently fast at infinity.

\begin{thm} \label{glreg}
Let $\beta\in (0,1]$ and $0<T<\infty$. Let $\theta_0 \in C^\infty(\mathbb{R}^2)$
and decays sufficiently fast at $\infty$. Let $\widetilde{u}$ be a smooth 2D vector field satisfying $\nabla\cdot \widetilde{u}=0$ and
$$
M\equiv \max\left\{\|\widetilde{u}\|_{L^\infty(0,T; L^2(\mathbb{R}^2))}, \|\nabla \widetilde{u}\|_{L^\infty(0,T; L^\infty(\mathbb{R}^2))}\right\} <\infty.
$$
Then (\ref{active}) has a unique global smooth solution $\theta$ on $[0,T]$. In addition,
\begin{equation} \label{nathb}
\|\nabla \th\|_{L^\infty(0,T; L^\infty(\mathbb{R}^2))}  \le C(M, \|\th_0\|_{L^\infty}, \|\na \th_0\|_{L^\infty}, T),
\end{equation}
where the constant $C$ in the inequality above depends only on the quantities inside the parenthesis.
\end{thm}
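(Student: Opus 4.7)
The plan is to adapt the nonlinear maximum principle strategy of Constantin--Vicol \cite{CV} to accommodate the extra smooth drift $\widetilde{u}$. Local existence of a smooth, rapidly decaying solution on some maximal interval $[0,T^\ast)$ is standard (via Friedrichs truncation plus contraction in $H^s$ for $s$ sufficiently large), so the entire difficulty is to produce a global a priori bound on $\|\nabla\theta\|_{L^\infty}$ depending only on the quantities listed in \eqref{nathb}. Higher Sobolev norms, and hence global smoothness and uniqueness, then propagate by routine Kato--Ponce commutator estimates in $H^s$ together with a continuation argument driven by $\|\nabla u\|_{L^\infty}$.

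First, observe that $u=\widetilde{u}+v$ is divergence-free: $v=-\nabla^\perp\Lambda^{-3+\beta}\partial_1\theta$ is solenoidal by construction and $\nabla\cdot\widetilde{u}=0$ by hypothesis. Testing the temperature equation against $|\theta|^{p-2}\theta$ and invoking the C\'ordoba--C\'ordoba pointwise inequality gives
\begin{equation*}
\|\theta(t)\|_{L^p(\mathbb{R}^2)}\le\|\theta_0\|_{L^p(\mathbb{R}^2)},\qquad 2\le p\le\infty,
\end{equation*}
so in particular $\|\theta\|_{L^\infty_{t,x}}\le\|\theta_0\|_{L^\infty}$ is controlled for free.

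The key a priori bound comes from the evolution of $|\nabla\theta|^2$. Applying $\partial_j$ to the scalar equation and contracting with $\partial_j\theta$ leaves only the symmetric part of $\nabla u$ on the right:
\begin{equation*}
(\partial_t+u\cdot\nabla)\tfrac12|\nabla\theta|^2+\nabla\theta\cdot\Lambda^\beta\nabla\theta=-\nabla\theta\cdot S(\nabla u)\,\nabla\theta.
\end{equation*}
Evaluating at a point $x_t$ where $|\nabla\theta(\cdot,t)|^2$ attains its spatial maximum, the transport term vanishes, and the C\'ordoba--C\'ordoba inequality together with the Constantin--Vicol nonlinear lower bound for $\Lambda^\beta$ at an extremum provides a coercive dissipation term whose super-linear growth in $\|\nabla\theta\|_{L^\infty}$ is controlled from below in terms of $\|\theta_0\|_{L^\infty}$ (via a Chebyshev-type choice of radius). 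The contribution of $S(\nabla\widetilde{u})$ to the right-hand side is at most $M\|\nabla\theta\|_{L^\infty}^2$, which is benign.

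The main obstacle, and the reason Section \ref{repre} is essential, is the pointwise control of $|S(\nabla v)(x_t)|$. Using the integral representation \eqref{vsys} and the boundedness of the homogeneous-degree-zero matrix kernel $\sigma$,
\begin{equation*}
|S(\nabla v)(x_t)|\le C\int_{\mathbb{R}^2}\frac{|\partial_1\theta(x_t)-\partial_1\theta(y)|}{|x_t-y|^{1+\beta}}\,dy,
\end{equation*}
where the commutator-type subtraction $\partial_1\theta(x_t)-\partial_1\theta(y)$ is legitimately inserted thanks to the zero-mean identity $\int_{|z|=r}\sigma(z)\,d\ell(z)=0$ built into Lemma \ref{vrep}. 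I would split this at a radius $r>0$: on $\{|x_t-y|\le r\}$ the crude bound $|\partial_1\theta(x_t)-\partial_1\theta(y)|\le 2\|\nabla\theta\|_{L^\infty}$ combined with the integrable singularity $|x_t-y|^{-(1+\beta)}$ in $\mathbb{R}^2$ yields a contribution bounded by $C\|\nabla\theta\|_{L^\infty}r^{1-\beta}$; on $\{|x_t-y|>r\}$ one integrates by parts to transfer $\partial_1$ from $\theta$ onto the kernel and uses $\|\theta\|_{L^\infty}\le\|\theta_0\|_{L^\infty}$ to obtain a contribution bounded by $C\|\theta_0\|_{L^\infty}r^{-\beta}$. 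Optimizing in $r$ produces
\begin{equation*}
|S(\nabla v)(x_t)|\le C\,\|\theta_0\|_{L^\infty}^{1-\beta}\,\|\nabla\theta(\cdot,t)\|_{L^\infty}^{\beta}.
\end{equation*}
Substituted into the evolution of $N(t):=\|\nabla\theta(\cdot,t)\|_{L^\infty}^2$, this contribution is absorbed into the nonlinear dissipation by Young's inequality (the critical scaling is precisely what makes the two exponents balance), leaving only the linear term $MN(t)$ to be handled by Gronwall. This yields \eqref{nathb} on $[0,T]$; the higher-order bounds needed for global smoothness and uniqueness then follow from standard $H^s$ energy estimates. The decisive step throughout is the pointwise dissection of $S(\nabla v)$ via Lemma \ref{vrep}: without the zero-mean cancellation of $\sigma$, the $r$-optimization above would collapse and the critical scaling $\beta<1$ would not close.
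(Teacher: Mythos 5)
Your overall architecture (local existence plus an a priori bound on $\|\nabla\theta\|_{L^\infty}$ via the Constantin--Vicol nonlinear maximum principle, with $S(\nabla v)$ controlled through the integral representation \eqref{vsys}) matches the paper's, but the decisive step does not close as written. You split the integral for $|S(\nabla v)(x_t)|$ into only two regions and optimize in $r$, obtaining $|S(\nabla v)|\le C\|\theta_0\|_{L^\infty}^{1-\beta}\|\nabla\theta\|_{L^\infty}^{\beta}$. Multiplying by $|\nabla\theta|^2$ gives a term of size $C\|\theta_0\|_{L^\infty}^{1-\beta}\|\nabla\theta\|_{L^\infty}^{2+\beta}$, which has \emph{exactly} the same exponent as the coercive lower bound $|\nabla\theta|^{2+\beta}/\bigl(C_0\|\theta_0\|_{L^\infty}^{\beta}\bigr)$ from Lemma \ref{low}. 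When the exponents coincide, Young's inequality buys you nothing: absorption reduces to the requirement $C\,C_0\,\|\theta_0\|_{L^\infty}\le 1$, i.e.\ your argument proves \eqref{nathb} only for small initial data. This is precisely the classical obstruction for critical SQG-type equations --- the criticality that "makes the two exponents balance" is the problem, not the solution.

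The missing idea is the "only small shocks" mechanism, which is why the paper's proof runs through Propositions \ref{xxx1} and \ref{xxx2}. One splits the integral into \emph{three} regions $|x-y|\le\rho$, $\rho<|x-y|\le L$, $|x-y|>L$. The near region is absorbed into the quadratic dissipation term $D(\nabla\theta)$ (via Cauchy--Schwarz, giving $\sqrt{D(\nabla\theta)}\,\rho^{1-\beta/2}$ rather than the crude $\|\nabla\theta\|_{L^\infty}\rho^{1-\beta}$ bound). In the intermediate region one does \emph{not} use $\|\theta\|_{L^\infty}\le\|\theta_0\|_{L^\infty}$ but instead the small-oscillation property $|\theta(x)-\theta(y)|\le\delta$ for $|x-y|\le L$, yielding $C\delta\rho^{-\beta}$ with $\delta$ a free small parameter chosen as in \eqref{deldef}; this is what supplies the smallness that your constant $\|\theta_0\|_{L^\infty}^{1-\beta}$ lacks. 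The far region contributes only $C\|\theta_0\|_{L^\infty}L^{-\beta}\,|\nabla\theta|^2$, a harmless linear term handled together with $M|\nabla\theta|^2$ by the ODE comparison. Finally, one must prove that the small-shocks property of $\theta_0$ propagates in time (Proposition \ref{xxx2}, via the evolution of $(\delta_h\theta)^2\Phi(h)$ and Lemma \ref{difflow}); this propagation step is entirely absent from your proposal and is not automatic --- without it the intermediate-region bound is unavailable for $t>0$.
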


We will be more specific about the decay requirement
on $\theta_0$ in the proof of Theorem \ref{glreg}.
The rest of this section proves Theorem \ref{glreg} and then Theorem \ref{glohs}.
The proof of Theorem \ref{glreg} employs the method of Constantin and Vicol \cite{CV}.
We recall a basic concept. For a given $\delta>0$, a function $f$ is said to have only small shocks with a parameter $\delta$ or simply $f\in OSS_\delta$ if there is $L>0$ such that
\begin{equation}\label{delL}
|f(x) -f(y)| \le \delta \quad\mbox{whenever $|x-y| <L$}.
\end{equation}
The proof of Theorem \ref{glreg} consists of two main parts. The first part shows that $\th\in OSS_\delta$
for a suitable $\delta=\delta(\|\th_0\|_{L^\infty})$ implies (\ref{nathb}).
$L$ in (\ref{delL}) is not required to be big in order for this part of
the result to hold.  What we really need here is a smoothness property on $\theta$
and it suffices for $\theta\in OSS_\delta$ with a small $L$, say $L<1$.
The second part proves that $\theta_0\in OSS_{\frac{\delta}{4}}$ for some $L>0$ implies that $\th\in OSS_\delta$ for the same $L$.
For the sake of clarity , we present each part as a proposition.

\vskip .1in
\begin{prop} \label{xxx1}
Let $\beta\in (0,1]$ and $0<T<\infty$. Let $\widetilde{u}$ be a vector field satisfying $\nabla\cdot \widetilde{u}=0$ and
$$
M\equiv \max\left\{\|\widetilde{u}\|_{L^\infty(0,T; L^2(\mathbb{R}^2))}, \|\nabla \widetilde{u}\|_{L^\infty(0,T; L^\infty(\mathbb{R}^2))}\right\} <\infty.
$$
If $\th$ is in $OSS_{\delta}$ uniformly on $[0,T]$ with $\delta$ given by
\begin{equation} \label{deldef}
\delta = \frac{C}{\|\th_0\|_{L^\infty}^{\frac{2\beta}{2-\beta}}}
\end{equation}
for a suitable pure constant $C>0$ (independent of $\theta_0$), then $\na\th$ is bounded as in (\ref{nathb}).
\end{prop}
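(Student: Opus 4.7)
The plan is to adapt the Constantin--Vicol nonlinear maximum principle method to this generalized critical SQG equation, making essential use of the integral representation of $S(\nabla v)$ from Lemma~\ref{vrep} together with the small-shock hypothesis on $\theta$. First I would differentiate the equation in (\ref{active}), dot with $\nabla\theta$, and use the C\'ordoba--C\'ordoba pointwise inequality $2\nabla\theta\cdot\Lambda^\beta\nabla\theta \ge \Lambda^\beta|\nabla\theta|^2$. Since the antisymmetric part of $\nabla u$ drops out of the quadratic term, this yields, with $g:=|\nabla\theta|^2$,
$$
\partial_t g + u\cdot\nabla g + \Lambda^\beta g + 2\,\nabla\theta\cdot S(\nabla u)\,\nabla\theta \le 0.
$$
Evaluating at a spatial maximum $x^*(t)$ of $g(\cdot,t)$, the transport term drops out, so the whole argument reduces to bounding $\nabla\theta\cdot S(\nabla u)\,\nabla\theta$ at $x^*$ by a quantity that can be absorbed by the dissipation $\Lambda^\beta g(x^*)$.

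Writing $S(\nabla u) = S(\nabla\widetilde{u}) + S(\nabla v)$, the hypothesis on $\widetilde{u}$ gives a contribution bounded by $M\,g(x^*)$. For the nonlocal part I would invoke the integral formula (\ref{vsys}) to write
$$
|S(\nabla v)(x^*)| \le C\int_{\mathbb{R}^2}\frac{|\partial_1\theta(x^*) - \partial_1\theta(y)|}{|x^*-y|^{1+\beta}}\,dy,
$$
and split the domain at a radius $0 < r < L$. On $|x^*-y| < r$, the fact that $x^*$ is a maximum of $|\nabla\theta|$ gives $|\partial_1\theta(y)| \le |\nabla\theta(x^*)|$, so this piece is $\lesssim |\nabla\theta(x^*)|\,r^{1-\beta}$ (convergent since $\beta<1$). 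On $|x^*-y| > r$ I would integrate by parts in $y_1$, transferring the derivative onto the kernel, which now decays like $|x^*-y|^{-2-\beta}$ and acts on the undifferentiated difference $\theta(x^*) - \theta(y)$. On the annulus $r < |x^*-y| < L$, the OSS hypothesis gives $|\theta(x^*) - \theta(y)| \le \delta$, producing $\lesssim \delta\,r^{-\beta}$; on $|x^*-y| \ge L$, the maximum principle $\|\theta(t)\|_{L^\infty} \le \|\theta_0\|_{L^\infty}$ yields a tail $\lesssim \|\theta_0\|_{L^\infty}/L^\beta$. Optimizing $r \sim \delta/|\nabla\theta(x^*)|$ (valid once $|\nabla\theta(x^*)|$ is large) leads to
$$
|S(\nabla v)(x^*)| \le C\bigl(\delta^{1-\beta}\,|\nabla\theta(x^*)|^{\beta} + \|\theta_0\|_{L^\infty}\,L^{-\beta}\bigr).
$$

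Inserting this into the pointwise inequality for $g$ at $x^*$ and combining it with a Constantin--Vicol-type nonlinear lower bound $\Lambda^\beta g(x^*) \ge c\,g(x^*)^{1+\beta/2}/\|\theta_0\|_{L^\infty}^{2\beta(1-\beta)/(2-\beta)}$ (to be extracted from the $L^\infty$-bound on $\theta$ and the max-point structure of $g$), I would arrive, for $G(t):=\|\nabla\theta(t)\|_{L^\infty}^2$, at a scalar inequality schematically of the form
$$
\frac{dG}{dt} \le \bigl(C\,\delta^{1-\beta} - c/\|\theta_0\|_{L^\infty}^{2\beta(1-\beta)/(2-\beta)}\bigr)\,G^{1+\beta/2} + C(M,\|\theta_0\|_{L^\infty},L)\,G.
$$
The precise choice $\delta = c_0/\|\theta_0\|_{L^\infty}^{2\beta/(2-\beta)}$ in (\ref{deldef}) is calibrated so that $\delta^{1-\beta} = c_0^{1-\beta}/\|\theta_0\|_{L^\infty}^{2\beta(1-\beta)/(2-\beta)}$; taking $c_0$ small makes the bracket negative, absorbs the superlinear term, and leaves a linear Gronwall inequality $\dot G \le C(M,\|\theta_0\|_{L^\infty},L)\,G$ that integrates to give (\ref{nathb}). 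The main obstacle is producing the nonlinear dissipation bound with precisely the right power of $\|\theta_0\|_{L^\infty}$ to match the calibration of $\delta$; by comparison, the near/far decomposition for $S(\nabla v)$ and the final Gronwall step are technical but routine once the integral representation of Lemma~\ref{vrep} is in hand.
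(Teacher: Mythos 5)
Your overall architecture (Constantin--Vicol method, integral representation of $S(\nabla v)$ from Lemma~\ref{vrep}, near/middle/far splitting with OSS on the middle annulus and the maximum principle on the tail) matches the paper's, but there is a genuine gap at the two places you yourself flag as delicate, and they do not close. First, your inner-region estimate $\int_{|x^*-y|<r}|x^*-y|^{-1-\beta}|\partial_1\theta(x^*)-\partial_1\theta(y)|\,dy \lesssim |\nabla\theta(x^*)|\,r^{1-\beta}$ is too lossy (and diverges at $\beta=1$, which the proposition permits). The paper instead bounds this piece by $\sqrt{D(\nabla\theta)}\,\rho^{1-\beta/2}$ via Cauchy--Schwarz against the nonnegative defect term $D(\nabla\theta)$ that comes for free in Lemma~\ref{low}; after Young's inequality this contributes $\tfrac12 D(\nabla\theta)+C\rho^{2-\beta}|\nabla\theta|^4$, and the radius is then chosen as $\rho\sim\big(\|\theta_0\|_{L^\infty}^{\beta}\big)^{-1/(2-\beta)}|\nabla\theta(x)|^{-1}$ to balance \emph{this} term against the nonlinear damping $|\nabla\theta|^{2+\beta}/(C_0\|\theta_0\|_{L^\infty}^{\beta})$. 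With that $\rho$, the OSS middle term $\delta\rho^{-\beta}|\nabla\theta|^2\sim \delta\,\|\theta_0\|_{L^\infty}^{\beta^2/(2-\beta)}|\nabla\theta|^{2+\beta}$ is absorbed by the damping exactly when $\delta\lesssim\|\theta_0\|_{L^\infty}^{-2\beta/(2-\beta)}$ --- this is where the exponent in (\ref{deldef}) actually comes from. Your choice $r\sim\delta/|\nabla\theta|$ balances inner against middle instead, and with the true damping exponent it forces $\delta\lesssim\|\theta_0\|_{L^\infty}^{-\beta/(1-\beta)}$, a strictly smaller $\delta$ than the statement requires (and vacuous at $\beta=1$).

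Second, to rescue the calibration you postulate a damping bound $\Lambda^\beta g(x^*)\ge c\,g(x^*)^{1+\beta/2}/\|\theta_0\|_{L^\infty}^{2\beta(1-\beta)/(2-\beta)}$. Since $2(1-\beta)/(2-\beta)<1$, this is \emph{stronger} than the Constantin--Vicol bound of Lemma~\ref{low} (which at $q=\infty$, $d=2$ gives denominator $\|\theta\|_{L^\infty}^{\beta}$), and the exponent $\beta$ there is sharp by scaling; no such improvement is available. So the "main obstacle" you identify is not a technical matter of extraction but the point where the argument fails, and the fix is not a better damping estimate but the $\sqrt{D}$ treatment of the inner region together with the different choice of $\rho$. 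A secondary remark: evaluating at a spatial maximum of $|\nabla\theta|^2$ is only heuristic here; the paper makes this rigorous by multiplying the pointwise inequality by $\Gamma'(|\nabla\theta|^2)$ for a convex $\Gamma$ vanishing below the target threshold, using $\Lambda^\beta(\Gamma(f))\le\Gamma'(f)\Lambda^\beta f$, and passing $L^{2p}\to L^\infty$.
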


\vskip .1in
\begin{prop} \label{xxx2}
Let $\beta\in (0,1]$ and $0<T<\infty$. Let $\widetilde{u}$ be a vector field satisfying $\nabla\cdot \widetilde{u}=0$ and
$$
M\equiv \max\left\{\|\widetilde{u}\|_{L^\infty(0,T; L^2(\mathbb{R}^2))}, \|\nabla \widetilde{u}\|_{L^\infty(0,T; L^\infty(\mathbb{R}^2))}\right\} <\infty.
$$
Assume that $\th_0\in OSS_{\delta/4}$ for some $\delta>0$. Then $\th\in OSS_{\delta}$ uniformly on $[0,T]$.
\end{prop}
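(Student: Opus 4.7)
The plan is to establish Proposition \ref{xxx2} by a nonlocal maximum--principle argument applied to the double difference $F(x,y,t):=\theta(x,t)-\theta(y,t)$ viewed as a function on the strip $R=\{(x,y):|x-y|\le L\}$. The key structural point is that the dissipation produced by $\Lambda^\beta$ is invariant under the diagonal translation $(x,y)\mapsto(x+z,y+z)$, which preserves $R$, so the associated nonlocal dissipation of $F$ has a definite sign at a constrained maximum of $F$ over $R$. Arguing by contradiction, if $\theta\notin OSS_\delta$ on $[0,T]$, then by continuity (and since $\theta_0\in OSS_{\delta/4}$) there exist a first time $t^*\in(0,T]$ and points $x^*,y^*\in\mathbb{R}^2$ with $|x^*-y^*|\le L$ such that $F(x^*,y^*,t^*)=\delta$ (possibly after replacing $\theta$ by $-\theta$), while $F\le\delta$ on $R$ throughout $[0,t^*]$. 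In particular $\partial_t F(x^*,y^*,t^*)\ge 0$.

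From the PDE \eqref{active}, using the singular--integral representation of $\Lambda^\beta$ and the diagonal change of variables $z\mapsto z$ in both nonlocal integrals,
\[
\partial_t F + u(x)\cdot\nabla\theta(x) - u(y)\cdot\nabla\theta(y) + D(x,y) = 0,\quad D(x,y) = c_\beta\,\mathrm{p.v.}\!\int_{\mathbb{R}^2}\frac{F(x,y)-F(x+z,y+z)}{|z|^{2+\beta}}\,dz.
\]
Since $|(x^*+z)-(y^*+z)|=|x^*-y^*|\le L$, we have $F(x^*+z,y^*+z,t^*)\le\delta=F(x^*,y^*,t^*)$ for every $z$, hence $D(x^*,y^*,t^*)\ge 0$. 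A quantitative Constantin--Vicol type lower bound of the form $D(x^*,y^*,t^*)\gtrsim \delta\,R_0^{-\beta}$, where $R_0$ depends only on $\|\theta_0\|_{L^\infty}$ and $\delta$, is obtained by restricting the $z$--integral to a scale on which $F(x^*+z,y^*+z)\le\delta/2$ on a region of controlled measure, using the maximum principle $\|\theta(\cdot,t^*)\|_{L^\infty}\le\|\theta_0\|_{L^\infty}$ and the decay of $\theta$ inherited from the smooth, rapidly decreasing data in Theorem \ref{glreg}.

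For the transport I distinguish two cases. If $|x^*-y^*|<L$ (interior extremum), then $\nabla F=0$ at $(x^*,y^*)$ forces $\nabla\theta(x^*)=\nabla\theta(y^*)=0$, the transport term cancels, and the PDE yields $\partial_tF(x^*,y^*,t^*)=-D(x^*,y^*,t^*)<0$, contradicting $\partial_tF(x^*,y^*,t^*)\ge 0$. If $|x^*-y^*|=L$ (boundary extremum), the Lagrange multiplier condition yields $\nabla\theta(x^*)=\nabla\theta(y^*)=2\lambda(x^*-y^*)$ for some $\lambda\ge 0$, so the transport contribution to $-\partial_tF$ reduces to $2\lambda\,(u(x^*)-u(y^*))\cdot(x^*-y^*)$. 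Splitting $u=\widetilde u+v$, the $\widetilde u$ piece is controlled via $\|\nabla\widetilde u\|_{L^\infty}\le M$, giving $|\widetilde u(x^*)-\widetilde u(y^*)|\le ML$; the $v$ piece is controlled through Lemma \ref{vrep}, integrated by parts to represent $v$ as the convolution of $\theta$ against a kernel of order $|w|^{-1-\beta}$, and then split at scale $|w|\sim L$ to obtain $|v(x^*)-v(y^*)|\lesssim\|\theta_0\|_{L^\infty}\,L^{1-\beta}$. The multiplier $\lambda$ is bounded through $|\nabla\theta(x^*)|=2\lambda L$ together with the gradient bound of Proposition \ref{xxx1} applied on $[0,t^*]$, where $\theta\in OSS_\delta$ holds. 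Choosing $L$ small enough (depending on $\beta,\delta,M,\|\theta_0\|_{L^\infty}$ and $T$), the dissipation dominates the transport, so $\partial_tF(x^*,y^*,t^*)<0$ even in the boundary case, again contradicting $\partial_tF(x^*,y^*,t^*)\ge 0$.

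The principal obstacle is the boundary--maximum case, where the transport does not automatically vanish. Controlling the nonlocal drift difference $v(x^*)-v(y^*)$ via the integral representation of Lemma \ref{vrep}, balancing it against the quantitative Constantin--Vicol lower bound on $D$, and invoking Proposition \ref{xxx1} in a self--consistent way to bound $\nabla\theta$ at the breakdown time constitute the delicate step; the factor--$4$ margin between the hypothesis $OSS_{\delta/4}$ and the conclusion $OSS_\delta$ is precisely what leaves room to absorb the transport for a sufficiently small choice of $L$.
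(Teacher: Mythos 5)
Your overall strategy (a two--point maximum principle for $F(x,y,t)=\theta(x,t)-\theta(y,t)$ on the constrained set $|x-y|\le L$, with a first--breakdown time and a Lagrange--multiplier analysis at $|x^*-y^*|=L$) is genuinely different from the paper's, and it is exactly at the boundary--extremum case that the argument does not close. Two problems. First, a logical one: to control the multiplier $\lambda=|\nabla\theta(x^*,t^*)|/(2L)$ you invoke Proposition \ref{xxx1}, but that proposition only applies when $\theta\in OSS_{\delta}$ for the \emph{specific} $\delta$ of \eqref{deldef}, whereas Proposition \ref{xxx2} is asserted for an arbitrary $\delta>0$ and for the \emph{same} $L$ as in the hypothesis $\theta_0\in OSS_{\delta/4}$; you are not free to shrink $L$ afterwards, and for general $\delta$ the gradient bound is simply unavailable on $[0,t^*]$. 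Second, and more damaging, a quantitative one: even granting Proposition \ref{xxx1}, its constant $\widetilde C$ in \eqref{ctil} behaves like $(M+C\|\theta_0\|_{L^\infty}L^{-\beta})^{1/\beta}\|\theta_0\|_{L^\infty}\sim L^{-1}$ for small $L$. Your boundary transport term is $2\lambda\,(u(x^*)-u(y^*))\cdot(x^*-y^*)\le |\nabla\theta(x^*)|\,\bigl(ML+C\|\theta_0\|_{L^\infty}L^{1-\beta}\bigr)\lesssim L^{-1}\cdot L^{1-\beta}=L^{-\beta}$, while the Constantin--Vicol lower bound on the diagonal dissipation at separation $L$ is also of order $\delta^{1+\beta}\|\theta_0\|^{-\beta}L^{-\beta}$ (or, using the $L^2$ norm, of order $\delta^{1+\beta}\|\theta_0\|_{L^2}^{-\beta}$ independent of $L$). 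The two sides scale identically (or unfavorably) in $L$, so ``choose $L$ small enough so that dissipation dominates'' reduces to an $L$--independent inequality between $\delta$ and $\|\theta_0\|$ that fails in general. The factor--$4$ margin between $OSS_{\delta/4}$ and $OSS_\delta$ does not rescue this: it enters only through the initial data, not through the size of the transport at the touching point.

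The paper sidesteps the boundary case entirely. It evolves the weighted square $g(x,t;h)=(\delta_h\theta)^2\Phi(h)$ with $\Phi(h)=e^{-\Psi(|h|)}$ defined for \emph{all} $h$, so there is no hard constraint $|h|\le L$ and no Lagrange multiplier. The $h$--transport $\delta_h u\cdot\nabla_h$ acting on the weight produces the term $g\,|\delta_h u|\,\Psi'(|h|)$ in \eqref{Leq}, which is bounded using only $|\delta_h u|\le\widetilde M+C\|\theta_0\|_{L^2\cap L^\infty}$ --- no gradient of $\theta$ is needed anywhere. Choosing $\Psi(|h|)\sim|h|^{1-\beta}$ makes $\Psi'(|h|)\sim|h|^{-\beta}$ carry exactly the same $|h|^{-\beta}$ weight as the good term $g^{1+\beta/2}/(\|\theta_0\|_{L^\infty}^\beta|h|^\beta)$ from Lemma \ref{difflow}, so the comparison in \eqref{Leq2} reduces to the sign of $\delta_0^\beta-g^{\beta/2}$, which is the right dichotomy: the forcing is nonpositive precisely where $g\ge\delta_0^2$. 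The conclusion then follows from the convex--function/$L^q\to L^\infty$ maximum principle, which also avoids your implicit assumption that the supremum of $F$ over the unbounded set $R$ is attained. If you want to salvage a two--point argument, you would need to replace the sharp cutoff $|x-y|\le L$ by a modulus--of--continuity--type barrier in $|x-y|$ playing the role of $\Psi$; as written, the boundary case is a genuine gap.
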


\vskip .1in
With the two propositions above in our disposal, we can now prove Theorem \ref{glreg}.
\begin{proof}[Proof of Theorem \ref{glreg}]
Since $\th_0\in C^\infty(\mathbb{R}^2)$ and decays sufficiently fast at $\infty$, it is
easy to check $\th_0\in OSS_{\delta/4}$ for $\delta$ given by (\ref{deldef}). There are
two different ways to achieve this. The first is to use the simple inequality
$$
|\th_0(x)-\th_0(y)| \le \|\na \th_0\|_{L^\infty} |x-y|
$$
and take $L= \delta/(4\|\na\th_0\|_{L^\infty})$. Alternatively,
we first take $R>0$ such that $|\th_0(x)| < \delta/8$ for any $|x|\ge R$. Then, by the
uniform continuity of $\th_0$ in the disk $|x|\le 2R$, there is $L_1>0$ such that
$$
|\th_0(x)-\th_0(y)| \le \delta/4
$$
when $|x-y|\le L_1$. Taking $L=\min\{R, L_1\}$, we obtain $\th_0\in OSS_{\delta/4}$.
By Proposition \ref{xxx2}, $\th\in OSS_\delta$ uniformly on $[0,T]$. By Proposition \ref{xxx1}, $\na\th$ satisfies (\ref{nathb}). The
global existence and uniqueness follows
from a local well-posedness and an extension to a global solution through the
global bound for $\|\na\th\|_{L^\infty}$ in (\ref{nathb}). We omit further details. This completes the proof of Theorem \ref{glreg}.
\end{proof}

\vskip .1in
\begin{proof}[Proof of Theorem \ref{glohs}]
We first regularize $\widetilde{u}$ and the initial data. For $\epsilon>0$, we define $\rho_\epsilon$ to  be the standard mollifier, namely
$$
\rho(x) =\rho(|x|) \in C_0^\infty(\mathbb{R}^2), \quad \rho\ge 0, \quad \int_{\mathbb{R}^2}\rho(x)\,dx =1, \quad
\rho_\epsilon(x) = \epsilon^{-2} \rho\left(\frac{x}{\epsilon}\right).
$$
In addition, let $\chi_\epsilon$ be the standard smooth cutoff, namely $\chi_\epsilon(x)=\chi(\epsilon x), \chi\in C_0^\infty(\mathbb{R}^2)$ and $\chi(x)=1$ in $|x|\le 2$. Now we define
$$
\widetilde{u}^\epsilon =\rho_\epsilon\ast \widetilde{u}, \qquad
\theta_0^\epsilon= \rho_\epsilon\ast (\chi_\epsilon\th_0).
$$
and consider
the following regularized initial-value problem
\begin{equation} \label{active_modified}
\begin{cases}
\pp_t \theta^\epsilon + u^\epsilon\cdot\na \theta^\epsilon
+ \Lambda^{\beta} \theta^\epsilon =0, \quad x\in \mathbb{R}^2, \,\, t>0, \\
u^\epsilon = \widetilde{u}^\epsilon + v^\epsilon, \quad v^\epsilon =-\nabla^\perp
\Lambda^{-3+\beta} \pp_{1} \theta^\epsilon, \quad x\in \mathbb{R}^2, \,\,t>0, \\
\theta^\epsilon(x,0) =\th^\epsilon_0(x), \quad x\in \mathbb{R}^2.
\end{cases}
\end{equation}
Then $\th^\epsilon_0\in C^\infty(\mathbb{R}^2)$ and has the decay properties required in
Theorem \ref{glreg}. Therefore, by Theorem \ref{glreg}, (\ref{active_modified}) has a unique global smooth solution $\th^\epsilon$ satisfying
$$
\|\na \th^\epsilon\|_{L^\infty(0,T; L^\infty(\mathbb{R}^2))} \le C(M, \|\th^\epsilon_0\|_{L^\infty}, \|\na \th^\epsilon_0\|_{L^\infty},T).
$$
Since
$$
\|\th^\epsilon_0\|_{L^\infty} \le \|\th_0\|_{L^\infty}
\le \sqrt[3]{6}\, \|\na\th_0\|_{L^\infty}^{\frac13}\, \|\th_0\|_{L^2}^{\frac13}\,\|\na \th_0\|_{L^2}^{\frac13}
$$
and $\|\na \th^\epsilon_0\|_{L^\infty} \le \|\na\th_0\|_{L^\infty}$, $\na \th^\epsilon$
admits a global bound that is uniform with respect to
 $\epsilon$,
$$
\|\na \th^\epsilon\|_{L^\infty(0,T; L^\infty(\mathbb{R}^2))} \le C(M, \|\th_0\|_{H^1}, \|\na \th_0\|_{L^\infty},T).
$$
In addition, a simple energy estimate shows that
\begin{equation} \label{gggl}
\|\th^\epsilon\|_{H^1}\le C(M, \|\th_0\|_{H^1}, \|\na \th_0\|_{L^\infty},T).
\end{equation}
In fact, (\ref{gggl}) follows from the energy inequality
$$
\frac12 \frac{d}{dt} \|\nabla\th^\epsilon\|^2_{L^2} + \|\Lambda^{1+\frac{\beta}{2}} \th^\epsilon\|^2_{L^2} \le (\|\nabla \widetilde{u}^\epsilon\|_{L^\infty}
+ \|\nabla v^\epsilon\|_{L^\infty}) \|\nabla \th^\epsilon\|^2_{L^2},
$$
and the global bounds $\|\nabla \widetilde{u}^\epsilon\|_{L^\infty} \le \|\nabla \widetilde{u}\|_{L^\infty}$ and
\begin{eqnarray}
\|\nabla v^\epsilon\|_{L^\infty} &=& \|\na \nabla^\perp
\Lambda^{-3+\beta} \pp_{1} \theta^\epsilon\|_{L^\infty} \nonumber \\
&\le& C (\|\theta^\epsilon\|_{L^2} + \|\nabla \th^\epsilon\|_{L^\infty}) \label{simin}\\
&\le& C(M, \|\th_0\|_{H^1}, \|\na \th_0\|_{L^\infty},T). \nonumber
\end{eqnarray}
The interpolation inequality in (\ref{simin}) can
be proven through the
Littlewood-Paley decomposition and the details are deferred to the end of Section \ref{BesovComm}.
Then $\th^\epsilon$ has a weak limit $\theta\in H^1$, namely
\begin{equation} \label{hsweak}
\th^\epsilon \rightharpoonup \th \quad\mbox{weakly in $H^1$\, as \,$\epsilon \to 0$}.
\end{equation}
In addition, it can be shown that
\begin{equation} \label{l2strong}
\th^\epsilon \to \th \quad\mbox{strongly in $L^2$\, as \,$\epsilon \to 0$}.
\end{equation}
This can be achieved by estimating the difference
$\th^{\epsilon_1} -\th^{\epsilon_2}$ through energy estimates. Let $u^{\epsilon_1}$ and
$u^{\epsilon_2}$ be the corresponding velocities. Then $\overline{\th} =\th^{\epsilon_1} -\th^{\epsilon_2}$ and $\overline{u}=u^{\epsilon_1}-u^{\epsilon_2}$ satisfy
\begin{eqnarray*}
\pp_t \overline{\th} + \overline{u}\cdot\na \th^{\epsilon_1}
+ u^{\epsilon_2} \cdot\na \overline{\th}
+ \Lambda^\beta \overline{\th}=0.
\end{eqnarray*}
Therefore,
$$
\frac{d}{dt} \|\overline{\th}\|_{L^2}^2 + 2 \|\Lambda^{\beta/2} \overline{\th}\|_{L^2}^2
= -2\int_{\mathbb{R}^2} \overline{u}\cdot \na \th^{\epsilon_1} \overline{\th} \,dx.
$$
Noticing that
$$
\overline{u}=u^{\epsilon_1}-u^{\epsilon_2} =\widetilde{u}^{\epsilon_1}-\widetilde{u}^{\epsilon_2} + v^{\epsilon_1}-v^{\epsilon_2},
$$
we obtain, by H\"{o}lder's inequality,
\begin{eqnarray*}
\left|\int_{\mathbb{R}^2}
\overline{u}\cdot \na \th^{\epsilon_1} \overline{\th} \,dx\right|
&\le& \|\widetilde{u}^{\epsilon_1}-\widetilde{u}^{\epsilon_2}\|_{L^\infty} \|\na \th^{\epsilon_1}\|_{L^2}\,
\|\overline{\th}\|_{L^2}\\
&& \, + \|v^{\epsilon_1}-v^{\epsilon_2}\|_{L^{\frac{2}{\beta}}}\, \|\na \th^{\epsilon_1}\|_{L^{\frac{2}{1-\beta}}}\,
\|\overline{\th}\|_{L^2}.
\end{eqnarray*}
Due to $\|\widetilde{u}^{\epsilon_1}-\widetilde{u}^{\epsilon_2}\|_{L^\infty} \le C |\epsilon_1-\epsilon_2| M$ and recalling (\ref{gggl}), we have
$$
\|\widetilde{u}^{\epsilon_1}-\widetilde{u}^{\epsilon_2}\|_{L^\infty} \|\na \th^{\epsilon_1}\|_{L^2}\,
\|\overline{\th}\|_{L^2} \le C\,|\epsilon_1-\epsilon_2|\,\|\overline{\th}\|_{L^2}.
$$
where $C=C(M, \|\th_0\|_{H^1}, \|\na \th_0\|_{L^\infty},T)$. Since
$$
v^{\epsilon_1}-v^{\epsilon_2} =-\nabla^\perp \Delta^{-1} \Lambda^{-(1-\beta)} \pp_1\overline{\th},
$$
and by Hardy-Littlewood-Sobolev inequality,
$$
\|v^{\epsilon_1}-v^{\epsilon_2}\|_{L^{\frac{2}{\beta}}} \le C\, \|\Lambda^{-(1-\beta)} \overline{\th}\|_{L^{\frac{2}{\beta}}}
\le C\, \|\overline{\th}\|_{L^2}.
$$
By a simple interpolation inequality,
$$
\|\na \th^{\epsilon_1}\|_{L^{\frac{2}{1-\beta}}} \le \|\na \th^{\epsilon_1}\|^{1-\beta}_{L^2}
\|\na \th^{\epsilon_1}\|^\beta_{L^\infty} \le C(M, \|\th_0\|_{H^1}, \|\na \th_0\|_{L^\infty},T).
$$
Therefore,
$$
\frac{d}{dt} \|\overline{\th}\|_{L^2}^2 + 2 \|\Lambda^{\beta/2} \overline{\th}\|_{L^2}^2
\le C\,|\epsilon_1-\epsilon_2|\,\|\overline{\th}\|_{L^2} + \,C\, \|\overline{\theta}\|_{L^2}^2.
$$
Consequently
\begin{eqnarray*}
\|\overline{\th}(t)\|_{L^2} &\equiv& \|\th^{\epsilon_1}(t) -\th^{\epsilon_2}(t)\|_{L^2} \\ &\le&
C\,(|\epsilon_1-\epsilon_2|\, + \, \|\th_0^{\epsilon_1} -\th_0^{\epsilon_2}\|_{L^2}) e^{Ct} \\
&\le& C\, (|\epsilon_1-\epsilon_2|\, +\|\th_0^{\epsilon_1}-\th_0\|_{L^2} +\|\th_0-\th_0^{\epsilon_2}\|_{L^2})\, e^{Ct}\\
&\le& C\, \max\{\epsilon_1, \epsilon_2\}\, e^{Ct},
\end{eqnarray*}
where $C=C(M, \|\th_0\|_{H^1}, \|\na \th_0\|_{L^\infty},T)$.
This proves (\ref{l2strong}). By the interpolation inequality, for $0<s'<1$,
$$
\|\th^\epsilon - \th\|_{H^{s'}} \le C\,\|\th^\epsilon - \th\|^{1-s'/s}_{L^2}\, \|\th^\epsilon - \th\|^{s'/s}_{H^s},
$$
(\ref{hsweak}) and (\ref{l2strong}) imply that, for  $0<s'<1$,
$$
\th^\epsilon \to \th \quad\mbox{strongly in $H^{s'}$\, as \,$\epsilon \to 0$}.
$$
This strong convergence allows us to show that $\th$ is the
corresponding global solution of \eqref{active} satisfying $\th\in L^\infty ([0,T];H^1)$
and  $\|\na \th\|_{L^\infty} \le C(M, \|\th_0\|_{H^1}, \|\na \th_0\|_{L^\infty},T)$. The continuity of $\th$ in time, namely $\th\in C ([0,T];H^1)$, follows from a standard
approach (see, e.g., \cite[p.111]{MB} or \cite[p.138]{BCD}) and we omit the details. Finally,
it is not hard to verify that such solutions are unique. This completes the proof
of Theorem \ref{glohs}.
\end{proof}

\vskip .1in
We now turn to the proofs of Propositions \ref{xxx1} and \ref{xxx2}. To prove
Proposition \ref{xxx1},  we will make use of the following lower bound obtained in \cite{CV}. This lower bound improves a pointwise inequality of C\'ordoba
and C\'ordoba \cite{CC}.

\begin{lemma} \label{low}
Let $\beta\in (0,2)$ and $q\in [1, \infty]$. Let $f\in C^1(\mathbb{R}^d)$ decay
sufficiently fast at $\infty$.
Then the pointwise lower bound holds,
$$
\nabla f \cdot \Lambda^\beta (\na f) \ge \frac12 \Lambda^\beta \left(|\na f|^2\right) + \frac12 D(\na f) + \frac{|\nabla f|^{2+ \frac{\beta q}{q+d}}}{C_0\,\|f\|^{\frac{\beta q}{q+d}}_{L^q}},
$$
where $C_0=C_0(d, \beta, q)$ is a constant and $D$ is given by the principal value integral
$$
D(g) = C(d,\beta)\, \mbox{P.V.} \int_{\mathbb{R}^d} \frac{|g(x)-g(y)|^2}{|x-y|^{d+\beta}}\,dy \ge 0.
$$
\end{lemma}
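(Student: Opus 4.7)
The approach is to follow the ``nonlinear maximum principle'' strategy of Constantin and Vicol, which refines the pointwise identity of C\'ordoba and C\'ordoba. The key is to split the problem into an exact algebraic identity for the Cordoba-Cordoba remainder and a Sobolev-type gain extracted from a far-field portion of that remainder.

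First, I would establish the pointwise identity
$$
\nabla f(x)\cdot \Lambda^\beta \nabla f(x) \;=\; \tfrac12\,\Lambda^\beta |\nabla f|^2(x) \;+\; D(\nabla f)(x),
$$
starting from the singular integral representation $\Lambda^\beta g(x) = c_{d,\beta}\,\mathrm{P.V.}\int (g(x)-g(y))/|x-y|^{d+\beta}\,dy$ (valid for $\beta\in(0,2)$), applied componentwise to $g=\nabla f$, and then using the algebraic decomposition $g(x)\cdot(g(x)-g(y)) = \tfrac12(|g(x)|^2-|g(y)|^2)+\tfrac12|g(x)-g(y)|^2$. This already gives the basic $\tfrac12 D(\nabla f)$ term on the right of the claimed inequality; the extra Sobolev-type term will be produced by retaining only half of $D(\nabla f)$ in the final inequality and using the other half to create the bound $|\nabla f|^{2+\beta q/(q+d)}/\|f\|_{L^q}^{\beta q/(q+d)}$.

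Next, the heart of the argument is a far-field lower bound for $D(\nabla f)(x)$. For a free parameter $R>0$, restrict the integral to $|x-y|>R$ and use the pointwise inequality $|\nabla f(x)-\nabla f(y)|^2 \ge |\nabla f(x)|^2 - 2\nabla f(x)\cdot\nabla f(y)$. The $|\nabla f(x)|^2$ contribution gives a clean lower bound $c\,|\nabla f(x)|^2 R^{-\beta}$ via the elementary computation $\int_{|x-y|>R}|x-y|^{-d-\beta}\,dy \sim R^{-\beta}$. The cross term rewrites as $\nabla f(x)\cdot \int_{|x-y|>R}\nabla f(y)/|x-y|^{d+\beta}\,dy$; after smoothing the sharp indicator at $|x-y|=R$ by a cutoff of thickness comparable to $R$, integration by parts in $y$ moves the gradient off $f$ and produces integrals of $f(y)$ against kernels of order $|x-y|^{-(d+\beta+1)}$. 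H\"older's inequality with the $L^q$ norm of $f$ then yields a bound of the form $\lesssim |\nabla f(x)|\,\|f\|_{L^q}\,R^{-\beta-1-d/q}$.

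Finally, the choice of $R$ optimizes the trade-off between the two contributions; requiring the negative term to be at most half of the positive one gives
$R \sim \bigl(\|f\|_{L^q}/|\nabla f(x)|\bigr)^{q/(q+d)}$,
and substituting back produces
$$
\tfrac12 D(\nabla f)(x) \;\ge\; c\,|\nabla f(x)|^2 R^{-\beta}
\;=\; \frac{c\,|\nabla f(x)|^{2+\frac{\beta q}{q+d}}}{\|f\|_{L^q}^{\frac{\beta q}{q+d}}}.
$$
Combining this with the identity from the first step yields the lemma. The main technical obstacle is the integration by parts for the cross term: because the kernel $|x-y|^{-(d+\beta)}$ is only borderline integrable and $f$ is controlled only in $L^q$, one has to mollify the indicator of $\{|x-y|>R\}$ and keep track of both the ``annular'' boundary-layer contribution from the gradient of the cutoff and the bulk integral with weight $|x-y|^{-(d+\beta+1)}$ coming from differentiating the kernel. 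It is precisely this step that forces $\|f\|_{L^q}$ (not $\|\nabla f\|_{L^q}$) into the denominator and fixes the exponent $\beta q/(q+d)$; the requirement $f\in L^q$ with $q\in[1,\infty]$ is what makes the resulting bound finite after H\"older and optimization in $R$.
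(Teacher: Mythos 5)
The paper does not actually prove Lemma \ref{low}; it imports it verbatim from Constantin--Vicol \cite{CV}, so there is no in-paper argument to compare against. Your proposal is a correct reconstruction of the proof in that reference: the pointwise identity via the decomposition $g(x)\cdot(g(x)-g(y))=\tfrac12(|g(x)|^2-|g(y)|^2)+\tfrac12|g(x)-g(y)|^2$, the far-field lower bound on $D(\nabla f)$ with the mollified cutoff and integration by parts producing the $\|f\|_{L^q}\,R^{-\beta-1-d/q}$ cross term, and the optimization $R\sim(\|f\|_{L^q}/|\nabla f(x)|)^{q/(q+d)}$ all check out and yield exactly the exponent $\beta q/(q+d)$ in the statement.
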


\vskip .1in
\begin{proof}[Proof of Proposition \ref{xxx1}]
We show that, if
$$
\theta \in OSS_\delta \quad\mbox{with} \quad \delta =\frac{C}{\|\th_0\|_{L^\infty}^{\frac{2\beta}{2-\beta}}}
$$
for a suitable $C$, then $\na \th$ satisfies (\ref{nathb}).  Clearly, $\nabla \th$ satisfies
$$
\pp_t (\na\th) + u\cdot\na(\na\th) + \Lambda^\beta (\na\th) = -\nabla u (\na \th).
$$
Dotting with $\na \th$ and applying Lemma \ref{low} yield
\begin{eqnarray}
&& \frac12 (\pp_t + u\cdot\na + \Lambda^\beta) |\na \th|^2 + \frac12 D(\nabla \th) +
\frac{|\nabla \th|^{2+\beta}}{C_0\,\|\th_0\|^\beta_{L^\infty}} \nonumber\\
&& \qquad \qquad \qquad \qquad  \le |\na\th \cdot S(\nabla u) \cdot \na\th| \le \|S(\na u)\|_{L^\infty}\, |\na\th(x)|^2, \label{mo}
\end{eqnarray}
where $S(\nabla u)$ denotes the symmetric part of $\na u$, or $S(\na u) = \frac12((\na u) + (\na u)^t)$. Clearly
$$
\|S(\na u)\|_{L^\infty} \le \|\na \widetilde{u}\|_{L^\infty} +\|S(\na v)\|_{L^\infty} \le M + \|S(\na v)\|_{L^\infty}.
$$
Now we bound $\|S(\na v)\|_{L^\infty}$. According to Lemma \ref{vrep},
\begin{eqnarray*}
|S(\nabla v(x))| \le C\, \left|\int_{\mathbb{R}^2} \frac{\sigma(x-y)}{|x-y|^{1+\beta}}\,(\pp_1 \th(x) -\pp_1 \th(y))\,dy \right|.
\end{eqnarray*}
To further the estimate, we split the integral over $\mathbb{R}^2$ into three parts:
$$
|x-y|\le \rho, \quad \rho<|x-y|\le L, \quad |x-y|>L
$$
for $0<\rho<L$ to be specified later. By H\"{o}lder's inequality, the integral over
$|x-y|\le \rho$ is bounded by
\begin{eqnarray*}
\left|\int_{|x-y|\le \rho} \frac{1}{|x-y|^{1+\beta}}\, |\pp_1 \th(x) -\pp_1 \th(y)|\,dy\right| \le \sqrt{D(\na \th)}\, \rho^{1-\frac{\beta}{2}}.
\end{eqnarray*}
Through integration by parts, the integral over $\rho<|x-y|\le L$ is bounded by
\begin{eqnarray*}
\left|\int_{\rho<|x-y|\le L} \frac{1}{|x-y|^{2+\beta}}\, |\th(x)-\th(y)|\,dy\right|
&\le& \delta\,\left|\int_{\rho<|x-y|\le L} \frac{1}{|x-y|^{2+\beta}}\,\,dy\right|\\
&=& C\,\delta\, \rho^{-\beta}.
\end{eqnarray*}
We remark that a rigorous justification of the estimate above can be carried out through
smooth cutoff functions. Again through integration by parts, the integral over $|x-y|>L$ is bounded by
\begin{eqnarray*}
\left|\int_{|x-y|>L} \frac{1}{|x-y|^{2+\beta}}\, |\th(x)-\th(y)|\,dy\right|
\le C\, \|\theta_0\|_{L^\infty} L^{-\beta}.
\end{eqnarray*}
Therefore,
\begin{eqnarray*}
\|S(\na u)\|_{L^\infty}\, |\na\th(x)|^2
&\le & C\,\sqrt{D(\na \th)}\, \rho^{1-\frac{\beta}{2}}\, |\na\th(x)|^2
+ C\,\delta\, \rho^{-\beta} \, |\na\th(x)|^2\\
&& + (M + C\, \|\theta_0\|_{L^\infty} L^{-\beta}) \, |\na\th(x)|^2\\
&\le & \frac12 D(\na \th) + C\,\rho^{2-\beta}\, |\na\th(x)|^4+C\,\delta\, \rho^{-\beta} \, |\na\th(x)|^2\\
&& + (M + C\, \|\theta_0\|_{L^\infty} L^{-\beta}) \, |\na\th(x)|^2,
\end{eqnarray*}
where $C$'s are pure constants depending on $\beta$ only.
If we choose $\rho$ and $\delta$ as
$$
\rho= \frac{1}{(4C\,C_0 \|\th_0\|^\beta_{L^\infty})^{\frac{1}{2-\beta}} \, |\na \th(x)|}
\quad \mbox{and}\quad \delta = \frac{1}{C^{1+\beta}\, (4C_0 \|\th_0\|^\beta_{L^\infty})^{\frac{2}{2-\beta}}},
$$
then
$$
\|S(\na u)\|_{L^\infty}\, |\na\th(x)|^2  \le \frac12 D(\na \th) + \frac{|\nabla \th|^{2+\beta}}{2C_0\,\|\th_0\|^\beta_{L^\infty}} + (M + C\, \|\theta_0\|_{L^\infty} L^{-\beta}) \, |\na\th(x)|^2.
$$
Inserting this bound in (\ref{mo}) yields
\begin{equation} \label{bott}
\frac12 (\pp_t + u\cdot\na + \Lambda^\beta) |\na \th|^2  +
\frac{|\nabla \th(x)|^{2+\beta}}{2C_0\,\|\th_0\|^\beta_{L^\infty}} \le (M + C\, \|\theta_0\|_{L^\infty} L^{-\beta}) \, |\na\th(x)|^2.
\end{equation}
This differential inequality allows us to conclude that
\begin{equation} \label{bbbd}
|\na\th(x,t)| \le \max\{\|\na \th_0\|_{L^\infty}, \widetilde{C} (M, \|\theta_0\|_{L^\infty})\},
\end{equation}
where $\widetilde{C}$ is given by
\begin{equation} \label{ctil}
\widetilde{C} = \left(2C_0(M + C\,\|\theta_0\|_{L^\infty} L^{-\beta} \right)^{\frac{1}{\beta}} \|\theta_0\|_{L^\infty}.
\end{equation}
To show (\ref{bbbd}), we define $\Gamma: [0,\infty)\to [0,\infty)$ to be a nondecreasing $C^2$ convex function satisfying $\Gamma(\rho)= 0$ for $0\le \rho \le \max\{\|\na \th_0\|_{L^\infty}, \widetilde{C}\}$ and $\Gamma(\rho)> 0$
for $\rho > \max\{\|\na \th_0\|_{L^\infty}, \widetilde{C}\}$. Multiplying (\ref{bott})
by $\Gamma'(|\na \th|^2)$ and applying the lower bound
\begin{equation} \label{lowbd}
\Lambda^\beta \left(\Gamma(f)\right) \le \Gamma'(f)\,
\Lambda^\beta\,f,
\end{equation}
where is valid for any $\beta\in(0,2)$ and any convex function $\Gamma$ (see \cite{Constan,CC}), we obtain
\begin{eqnarray*}
&&\frac12 (\pp_t + u\cdot\na + \Lambda^\beta)\, \Gamma(|\na \th|^2)
\\
&& \qquad \qquad \le (M + C\, \|\theta_0\|_{L^\infty} L^{-\beta}) \, |\na\th(x)|^2\left[1-\frac{|\na \th(x,t)|^\beta}{\widetilde{C}^\beta}\right]\, \Gamma'(|\na \th|^2).
\end{eqnarray*}
It is easy to verify that the right-hand side of the inequality above is always
less than or equal to zero due to the definition of $\Gamma$. Therefore,
$$
(\pp_t + u\cdot\na + \Lambda^\beta)\, \Gamma(|\na \th|^2) \le 0.
$$
Thanks to $\nabla \cdot u=0$ and the positivity of integral
$$
\int_{\mathbb{R}^2} f(x) |f(x)|^{2p-2}\,\,\Lambda^\beta\,f(x)\,dx \ge 0,
$$
we obtain $\|\Gamma(|\na \th|^2)\|_{L^{2p}} \le \|\Gamma(|\na \th_0|^2)\|_{L^{2p}}$ for any $1\le p<\infty$. Letting $p\to \infty$, we have
$$
\|\Gamma(|\na \th(t)|^2)\|_{L^\infty} \le \|\Gamma(|\na \th_0|^2)\|_{L^\infty} \le 0,
$$
which implies $(\ref{bbbd})$. This completes the proof of Proposition \ref{xxx1}.
\end{proof}

\vskip .1in
To prove Proposition \ref{xxx2}, we need the lower bound in the following lemma.
This lemma can be proven by following the lines
of that for Lemma \ref{low} (see \cite{CV}).

\begin{lemma}\label{difflow}
Let $\beta\in(0,2)$. Let $h\in \mathbb{R}^2$ and $\delta_h\theta(x,t)
\equiv \th(x+h,t) -\th(x,t)$. Then the following pointwise lower bound holds,
$$
\delta_h \th\,\Lambda^\beta \delta_h \th \ge \frac12 \Lambda^\beta (\delta_h \th)^2
+ D_h(\delta_h \th) + C\, \frac{|\delta_h \th(x,t)|^{2+\beta}}{\|\theta\|_{L^\infty}^\beta |h|^\beta},
$$
where $C$ is a constant depending on $\beta$ only, and
$$
D_h(\delta_h \th) = c\, P.V. \int_{\mathbb{R}^2} \frac{(\delta_h \th(x,t)-\delta_h \th(y,t))^2}{|x-y|^{2+\beta}} \,dy.
$$
\end{lemma}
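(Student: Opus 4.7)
My plan is to follow the proof of Lemma \ref{low} from Constantin--Vicol \cite{CV}, adapted to $g(y) := \delta_h\theta(y) = \theta(y+h,t) - \theta(y,t)$. First I would write the fractional Laplacian in its singular-integral form $\Lambda^\beta g(x) = c_\beta\,\mathrm{P.V.}\int_{\mathbb R^2} (g(x)-g(y))/|x-y|^{2+\beta}\,dy$ and combine it with the algebraic identity $2g(x)(g(x)-g(y)) = (g(x)^2 - g(y)^2) + (g(x)-g(y))^2$ to obtain the pointwise identity
\[
g(x)\,\Lambda^\beta g(x) = \tfrac12 \Lambda^\beta(g^2)(x) + \tfrac{c_\beta}{2}\,\mathrm{P.V.}\int \frac{(g(x)-g(y))^2}{|x-y|^{2+\beta}}\,dy.
\]
Choosing the constant $c$ in the definition of $D_h$ to be $c_\beta/4$ lets me decompose the last integral as $D_h(g)$ plus a remainder $I(x) := \tfrac{c_\beta}{4}\,\mathrm{P.V.}\int (g(x)-g(y))^2/|x-y|^{2+\beta}\,dy$, so the task reduces to showing $I(x) \ge C|g(x)|^{2+\beta}/(\|\theta\|_{L^\infty}^\beta |h|^\beta)$.

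Next I would extract the Nash-type term by truncating at a radius $R$ to be chosen. Restricting the integration to $|x-y| > R$ and applying $(a-b)^2 \ge \tfrac12 a^2 - b^2$ would give
\[
I(x) \ge \frac{c|g(x)|^2}{R^\beta} - \int_{|x-y|>R}\frac{|g(y)|^2}{|x-y|^{2+\beta}}\,dy.
\]
In contrast to Lemma \ref{low}, the key input here is the \emph{uniform} pointwise bound $|g(y)|\le 2\|\theta\|_{L^\infty}$, which follows directly from $g = \delta_h\theta$ and plays the role of an effective $L^\infty$-norm in the Constantin--Vicol framework. With this in hand the optimal balance is struck at $R = C_0\,\|\theta\|_{L^\infty}|h|/|g(x)|$, which is automatically $\ge C_0|h|/2$ since $|g(x)|\le 2\|\theta\|_{L^\infty}$; under this choice the positive term becomes
\[
\frac{c}{C_0^\beta}\cdot\frac{|g(x)|^{2+\beta}}{\|\theta\|_{L^\infty}^\beta |h|^\beta},
\]
which is exactly the target.

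The hard part will be controlling the tail $\int_{|x-y|>R}|g(y)|^2/|x-y|^{2+\beta}\,dy$. The crude bound $C\|\theta\|_{L^\infty}^2/R^\beta$ has exactly the same $R$-scaling as the positive contribution, so without refinement the optimization only succeeds when $|g(x)|$ is already comparable to $\|\theta\|_{L^\infty}$. To handle all $|g(x)|\le 2\|\theta\|_{L^\infty}$ uniformly, I would exploit the shift structure of $g$ via the decomposition $g^2 = \delta_h(\theta^2) - 2\theta\,\delta_h\theta$ and the duality $\int (\delta_h f)\,\varphi\,dy = \int f\,(\delta_{-h}\varphi)\,dy$ applied to $\varphi(y) = \mathbf{1}_{|x-y|>R}/|x-y|^{2+\beta}$; because $\varphi$ is smooth on scales $R\gtrsim |h|$ (with only an annulus of width $|h|$ around $|x-y|=R$ contributing to the jump), this yields an extra factor of $|h|/R$ in the tail bound, after which the radius optimization above delivers the claimed lower bound exactly as in \cite{CV}.
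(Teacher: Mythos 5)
The paper does not actually write out a proof of this lemma; it only remarks that it ``can be proven by following the lines of that for Lemma \ref{low} (see \cite{CV})'', so the relevant comparison is with the Constantin--Vicol argument the paper points to. Your overall plan --- the pointwise identity, truncation of the dissipation integral at a radius $R\sim \|\theta\|_{L^\infty}|h|/|\delta_h\theta(x)|$, and a gain of $|h|/R$ coming from the shift structure of $\delta_h\theta$ --- is indeed that argument. However, your execution of the tail estimate has a genuine gap. By invoking $(a-b)^2\ge \tfrac12 a^2-b^2$ you produce a tail that is \emph{quadratic} in $g(y)=\delta_h\theta(y)$, namely $\int_{|x-y|>R}g(y)^2|x-y|^{-2-\beta}\,dy$, and this quantity does \emph{not} admit the $|h|/R$ improvement you claim. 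The decomposition $g^2=\delta_h(\theta^2)-2\theta\,\delta_h\theta$ is circular for this purpose: the cross term satisfies $2\theta\,\delta_h\theta=\delta_h(\theta^2)-g^2$, so it is exactly as hard as the quantity you started with, and the duality identity only helps with the $\delta_h(\theta^2)$ piece. In fact the claimed bound $\lesssim\|\theta\|_{L^\infty}^2|h|R^{-1-\beta}$ is false in general: if $\theta$ oscillates at scale $|h|$ in the direction of $h$, so that $\delta_h\theta\approx -2\theta$ on a large set, the quadratic tail is of order $\|\theta\|_{L^\infty}^2R^{-\beta}$ with no gain. Moreover, even granting the improved tail bound, absorbing it into the main term $c|g(x)|^2R^{-\beta}$ forces $R\gtrsim\|\theta\|_{L^\infty}^2|h|/|g(x)|^2$ (your choice $R\sim\|\theta\|_{L^\infty}|h|/|g(x)|$ absorbs the tail only when $|g(x)|\gtrsim\|\theta\|_{L^\infty}$), and the optimization then yields only $|g(x)|^{2+2\beta}/(\|\theta\|_{L^\infty}^{2\beta}|h|^\beta)$, strictly weaker than the stated bound when $|g(x)|\ll\|\theta\|_{L^\infty}$.

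The repair is the one actually used in \cite{CV}: do not symmetrize the square. Write $(g(x)-g(y))^2\ge g(x)^2-2g(x)g(y)$, so that the error term is $2|g(x)|\,\bigl|\int_{|x-y|>R}g(y)\,|x-y|^{-2-\beta}\,dy\bigr|$, which is \emph{linear} in $g(y)=\delta_h\theta(y)$ and therefore genuinely telescopes: with $\varphi(y)=\chi(|x-y|/R)\,|x-y|^{-2-\beta}$ a (smoothly truncated) kernel one has $\int \delta_h\theta(y)\,\varphi(y)\,dy=\int\theta(y)\,(\varphi(y-h)-\varphi(y))\,dy$, bounded by $\|\theta\|_{L^\infty}\,|h|\,\|\nabla\varphi\|_{L^1}\lesssim\|\theta\|_{L^\infty}|h|R^{-1-\beta}$. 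The error is then $\lesssim |g(x)|\,\|\theta\|_{L^\infty}|h|R^{-1-\beta}$, linear in $|g(x)|$, and your choice $R=C_0\|\theta\|_{L^\infty}|h|/|g(x)|$ (which is $\ge C_0|h|/2$, as you note, so the mean-value step is legitimate) absorbs it into half of the main term $c|g(x)|^2R^{-\beta}$ for $C_0$ large, producing exactly $C|g(x)|^{2+\beta}/(\|\theta\|_{L^\infty}^\beta|h|^\beta)$. With this correction the rest of your outline goes through.
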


\vskip .1in
\begin{proof}[Proof of Proposition \ref{xxx2}]
Let $h\in \mathbb{R}^2$ and consider the evolution of
\begin{equation}\label{gdef}
g(x,t;h)= (\delta_h\theta(x,t))^2\, \Phi(h)
\end{equation}
where $\Phi(h) =e^{-\Psi(h)}$ with $\Psi$ satisfying
\begin{equation}\label{Psip}
\Psi(h)=\Psi(|h|)\,\,\mbox{ is nondecreasing}, \quad\Psi(0)=0, \quad \Psi(h)\to +\infty\quad\mbox{as}\quad |h|\to \infty.
\end{equation}
An explicit form of $\Psi$ will be specified later. First of all,
$\delta_h\theta$ satisfies
\begin{equation}\label{deltat}
(\pp_t + u\cdot\na + \delta_h u \cdot\na_h + \Lambda^\beta) \delta_h \th =0,
\end{equation}
where $\delta_h u(x,t)=u(x+h,t) - u(x,t)$. Multiplying (\ref{deltat}) by $\delta_h\theta(x,t)\, \Phi(h)$ and applying Lemma \ref{difflow},
we obtain
\begin{eqnarray*}
&& (\pp_t + u\cdot\na + \delta_h u \cdot\na_h + \Lambda^\beta) g
+ 2D_h(\delta_h \th) \,\Phi(h) + C\,\Phi(h)\, \frac{|\delta_h \th(x,t)|^{2+\beta}}{\|\theta\|_{L^\infty}^\beta |h|^\beta}\\
&& \qquad \le (\delta_h\theta(x,t))^2\, \delta_h u \cdot\na_h \Phi(h).
\end{eqnarray*}
Noticing that $\na_h \Phi(h) =-\Phi(h) \na_h \Psi(h)$, we have
$$
(\delta_h\theta(x,t))^2\, \delta_h u \cdot\na_h \Phi(h) = - (\delta_h\theta(x,t))^2\,\Phi(h)\,\delta_h u \cdot\na_h \Psi(h).
$$
Therefore, if we write $\mathcal{L} \equiv \pp_t + u\cdot\na + \delta_h u \cdot\na_h
+ \Lambda^\beta$, we obtain
\begin{equation} \label{Leq}
\mathcal{L} g + 2D_h(\delta_h \th) \,\Phi(h) + C\, \frac{g^{1+\frac\beta2}}{\Phi^{\frac{\beta}{2}}(h)\|\theta\|_{L^\infty}^\beta |h|^\beta}
\le g\, |\delta_h u|\, \Psi'(|h|).
\end{equation}
Thanks to the assumptions on $\widetilde{u}$,
$$
|\delta_h \widetilde{u}| \le \widetilde{M} \equiv \min\{M |h|, 2\|u\|_{L^\infty(0,T; L^\infty)}\} <\infty.
$$
Therefore, $u = \widetilde{u} + v$ satisfies
\begin{equation} \label{dhuu}
|\delta_h u| \le \widetilde{M} + |\delta_h v|.
\end{equation}
According to Lemma \ref{vrep},
$$
v(x,t) = C\,\int_{\mathbb{R}^2} \frac{(x-y)^\perp}{|x-y|^{1+\beta}}\, \pp_1 \th(y)\,dy.
$$
Since $\beta \in (0,1)$ and $\th$ is bounded and decays sufficiently fast at $\infty$,
we obtain after integration by parts,
\begin{eqnarray*}
&& v_1(x,t) = (1+\beta)C\, \int_{\mathbb{R}^2} \frac{(x_1-y_1)(x_2-y_2)}{|x-y|^{3+\beta}}\,
\th(y)\,dy,\\
&& v_2(x,y) =C\, \int_{\mathbb{R}^2}\left[\frac{1}{|x-y|^{1+\beta}} + \frac{(1+\beta)(x_1-y_1)^2}{|x-y|^{3+\beta}}\right]\,
\th(y)\,dy.
\end{eqnarray*}
Therefore,
\begin{eqnarray}
|\delta_h v| &\le&  C\,  \int_{\mathbb{R}^2} \frac{1}{|x-y|^{1+\beta}} |\delta_h \th(y,t)| \,dy \nonumber\\
 &\le& C\,  \int_{|x-y|\le 1} \frac{1}{|x-y|^{1+\beta}} |\delta_h \th(y,t)| \,dy
 + C\, \int_{|x-y|> 1} \frac{1}{|x-y|^{1+\beta}} |\delta_h \th(y,t)| \,dy \nonumber\\
  &\le& C\,\|\th_0\|_{L^\infty\cap L^2}.\label{dhuu1}
\end{eqnarray}
Inserting the estimates of (\ref{dhuu}) and (\ref{dhuu1}) in (\ref{Leq}) and using $\Phi(h)\le 1$, we obtain
\begin{equation}\label{Leq0}
\mathcal{L} g + C_1\, \frac{g^{1+\frac\beta2}}{\|\theta_0\|_{L^\infty}^\beta |h|^\beta}
\le C_2\,g\,(\widetilde{M} +\|\theta_0\|_{L^2\cap L^\infty}) \, \Psi'(|h|),
\end{equation}
where $C_1$ and $C_2$ are constants depending on $\beta$ only. Now we choose
$$
\Psi(|h|) = \frac{C_1\, (\delta_0)^\beta}{C_2(1-\beta)\|\theta_0\|_{L^\infty}^\beta
(\widetilde{M} +\|\theta_0\|_{L^2\cap L^\infty})} |h|^{1-\beta}.
$$
Certainly this choice satisfies (\ref{Psip}). Then (\ref{Leq0}) becomes
\begin{equation}\label{Leq2}
\mathcal{L} g  \le \frac{C_1\,g}{|h|^\beta \|\th_0\|_{L^\infty}^\beta}
(\delta_0^\beta -g^{\frac\beta2}).
\end{equation}
We can then conclude from (\ref{Leq2}) that
$$
g(x,t;h) \le \delta^2_0 \quad\mbox{for $|h|\le L$}.
$$
This can be proven by following a similar argument as in the proof of
Proposition \ref{xxx1}. More precisely, we multiply (\ref{Leq2}) by
$\widetilde{\Gamma}'(g)$ with $\widetilde{\Gamma}$ being a non-decreasing
smooth convex function on $[0,\infty)$ satisfying $\widetilde{\Gamma}(\rho)=0$ for
$0\le \rho\le \delta_0^2$ and positive for $\rho\ge\delta_0^2$. Thanks
to (\ref{lowbd}),
$$
\mathcal{L} \left(\widetilde{\Gamma}(g)\right) \le
\frac{C_1\,g}{|h|^\beta \|\th_0\|_{L^\infty}^\beta}
(\delta_0^\beta -g^{\frac\beta2})\,\widetilde{\Gamma}'(g) \le 0.
$$
By first estimating the $L^q$-norm of $\widetilde{\Gamma}(g)$ and then sending
$q\to\infty$, we obtain that the $L^\infty$-norm of $\widetilde{\Gamma}(g)$
is bounded by its initial $L^\infty$-norm. Since initially $g\le \delta_0^2$ and
thus the initial $L^\infty$-norm of $\widetilde{\Gamma}(g)$ is zero, we have
$L^\infty$-norm of $\widetilde{\Gamma}(g)$ is zero for all time. Therefore
$g\le \delta_0^2$ and
$$
|\delta_h \theta(x,t)| \le \delta_0 \, (\Phi(h))^{-\frac12}
\le \delta_0 \,e^{\frac12 \Psi(L)}\, \le 4 \delta_0.
$$
This completes the proof of Proposition \ref{xxx2}.
\end{proof}


\vskip .3in
\section{Besov spaces and a commutator estimate}
\label{BesovComm}

This section provides the definitions of some of the functional spaces and related facts to be used in the subsequent sections .  More details can be found in several books and many papers (see, e.g., \cite{BCD,BL,MWZ,RS,Tri}). In addition, we
prove a commutator estimate to be used extensively in the sections that follow.

\vskip .1in
To introduce the Besov spaces, we start with a few notation. $\mathcal{S}$ denotes
the usual Schwarz class and ${\mathcal S}'$ its dual, the space of
tempered distributions. ${\mathcal S}_0$ denotes a subspace of ${\mathcal
S}$ defined by
$$
{\mathcal S}_0 = \left\{ \phi\in {\mathcal S}: \,\, \int_{\mathbb{R}^d}
\phi(x)\, x^\gamma \,dx =0, \,|\gamma| =0,1,2,\cdots \right\}
$$
and ${\mathcal S}_0'$ denotes its dual. ${\mathcal S}_0'$ can be identified
as
$$
{\mathcal S}_0' = {\mathcal S}' / {\mathcal S}_0^\perp = {\mathcal S}' /{\mathcal P}
$$
where ${\mathcal P}$ denotes the space of multinomials. For each $j\in \mathbb{Z}$, we write
\begin{equation}\label{aj}
A_j =\left\{ \xi \in \mathbb{R}^d: \,\, 2^{j-1} \le |\xi| <
2^{j+1}\right\}.
\end{equation}
The Littlewood-Paley decomposition asserts the existence of a
sequence of functions $\{\Phi_j\}_{j\in \mathbb{Z}}\in {\mathcal S}$ such
that
$$
\mbox{supp} \widehat{\Phi}_j \subset A_j, \qquad
\widehat{\Phi}_j(\xi) = \widehat{\Phi}_0(2^{-j} \xi)
\quad\mbox{or}\quad \Phi_j (x) =2^{jd} \Phi_0(2^j x),
$$
and
$$
\sum_{j=-\infty}^\infty \widehat{\Phi}_j(\xi) = \left\{
\begin{array}{ll}
1&,\quad \mbox{if}\,\,\xi\in \mathbb{R}^d\setminus \{0\},\\
0&,\quad \mbox{if}\,\,\xi=0.
\end{array}
\right.
$$
Therefore, for a general function $\psi\in {\mathcal S}$, we have
$$
\sum_{j=-\infty}^\infty \widehat{\Phi}_j(\xi)
\widehat{\psi}(\xi)=\widehat{\psi}(\xi) \quad\mbox{for $\xi\in \mathbb{R}^d\setminus \{0\}$}.
$$
In addition, if $\psi\in {\mathcal S}_0$, then
$$
\sum_{j=-\infty}^\infty \widehat{\Phi}_j(\xi)
\widehat{\psi}(\xi)=\widehat{\psi}(\xi) \quad\mbox{for any $\xi\in
{\mathbb R}^d $}.
$$
That is, for $\psi\in {\mathcal S}_0$,
$$
\sum_{j=-\infty}^\infty \Phi_j \ast \psi = \psi
$$
and hence
$$
\sum_{j=-\infty}^\infty \Phi_j \ast f = f, \qquad f\in {\mathcal S}_0'
$$
in the sense of weak-$\ast$ topology of ${\mathcal S}_0'$. For
notational convenience, we define
\begin{equation}\label{del1}
\mathring{\Delta}_j f = \Phi_j \ast f, \qquad j \in {\mathbb Z}.
\end{equation}
\begin{define}
For $s\in {\mathbb R}$ and $1\le p,q\le \infty$, the homogeneous Besov
space $\mathring{B}^s_{p,q}$ consists of $f\in {\mathcal S}_0' $
satisfying
$$
\|f\|_{\mathring{B}^s_{p,q}} \equiv \|2^{js} \|\mathring{\Delta}_j
f\|_{L^p}\|_{l^q} <\infty.
$$
\end{define}

\vskip .1in
We now choose $\Psi\in {\mathcal S}$ such that
$$
\widehat{\Psi} (\xi) = 1 - \sum_{j=0}^\infty \widehat{\Phi}_j (\xi),
\quad \xi \in \mathbb{R}^d.
$$
Then, for any $\psi\in {\mathcal S}$,
$$
\Psi \ast \psi + \sum_{j=0}^\infty \Phi_j \ast \psi =\psi
$$
and hence
\begin{equation}\label{sf}
\Psi \ast f + \sum_{j=0}^\infty \Phi_j \ast f =f
\end{equation}
in ${\mathcal S}'$ for any $f\in {\mathcal S}'$. To define the inhomogeneous Besov space, we set
\begin{equation} \label{del2}
\Delta_j f = \left\{
\begin{array}{ll}
0,&\quad \mbox{if}\,\,j\le -2, \\
\Psi\ast f,&\quad \mbox{if}\,\,j=-1, \\
\Phi_j \ast f, &\quad \mbox{if} \,\,j=0,1,2,\cdots.
\end{array}
\right.
\end{equation}

\begin{define}
The inhomogeneous Besov space $B^s_{p,q}$ with $1\le p,q \le \infty$
and $s\in {\mathbb R}$ consists of functions $f\in {\mathcal S}'$
satisfying
$$
\|f\|_{B^s_{p,q}} \equiv \|2^{js} \|\Delta_j f\|_{L^p} \|_{l^q}
<\infty.
$$
\end{define}

\vskip .1in
The Besov spaces $\mathring{B}^s_{p,q}$ and $B^s_{p,q}$ with  $s\in (0,1)$ and $1\le p,q\le \infty$ can be equivalently defined by the norms
$$
\|f\|_{\mathring{B}^s_{p,q}}  = \left(\int_{\mathbb{R}^d} \frac{(\|f(x+t)-f(x)\|_{L^p})^q}{|t|^{d+sq}} dt\right)^{1/q},
$$
$$
\|f\|_{B^s_{p,q}}  = \|f\|_{L^p} + \left(\int_{\mathbb{R}^d} \frac{(\|f(x+t)-f(x)\|_{L^p})^q}{|t|^{d+sq}} dt\right)^{1/q}.
$$
When $q=\infty$, the expressions are interpreted in the normal way.


\vskip .1in
Besides the Fourier localization operators $\Delta_j$, the partial sum $S_j$ is also a useful notation. For an integer $j$,
$$
S_j \equiv \sum_{k=-1}^{j-1} \Delta_k,
$$
where $\Delta_k$ is given by (\ref{del2}). For any $f\in \mathcal{S}'$, the Fourier transform of $S_j f$ is supported on the ball of radius $2^j$.

\vskip .1in
Bernstein's inequality is a useful tool on Fourier localized functions and these inequalities trade integrability for derivatives. The following proposition provides Bernstein type inequalities for fractional derivatives.
\begin{prop}\label{bern}
Let $\alpha\ge0$. Let $1\le p\le q\le \infty$.
\begin{enumerate}
\item[1)] If $f$ satisfies
$$
\mbox{supp}\, \widehat{f} \subset \{\xi\in \mathbb{R}^d: \,\, |\xi|
\le K 2^j \},
$$
for some integer $j$ and a constant $K>0$, then
$$
\|(-\Delta)^\alpha f\|_{L^q(\mathbb{R}^d)} \le C_1\, 2^{2\alpha j +
j d(\frac{1}{p}-\frac{1}{q})} \|f\|_{L^p(\mathbb{R}^d)}.
$$
\item[2)] If $f$ satisfies
\begin{equation*}\label{spp}
\mbox{supp}\, \widehat{f} \subset \{\xi\in \mathbb{R}^d: \,\, K_12^j
\le |\xi| \le K_2 2^j \}
\end{equation*}
for some integer $j$ and constants $0<K_1\le K_2$, then
$$
C_1\, 2^{2\alpha j} \|f\|_{L^q(\mathbb{R}^d)} \le \|(-\Delta)^\alpha
f\|_{L^q(\mathbb{R}^d)} \le C_2\, 2^{2\alpha j +
j d(\frac{1}{p}-\frac{1}{q})} \|f\|_{L^p(\mathbb{R}^d)},
$$
where $C_1$ and $C_2$ are constants depending on $\alpha,p$ and $q$
only.
\end{enumerate}
\end{prop}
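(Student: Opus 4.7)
The plan is to prove part (2) first by realizing $(-\Delta)^\alpha$ as a convolution operator whose kernel is concentrated in the same frequency annulus as $f$, and then to deduce part (1) from part (2) via the Littlewood--Paley decomposition. The key advantage of the annulus case is that the multiplier $|\xi|^{2\alpha}$ is smooth away from the origin, so after a cutoff adapted to the annulus we obtain a Schwartz kernel with excellent integrability properties, and the $j$-dependence is fully exposed by a single dilation.

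More concretely, for part (2) I would pick $\varphi \in C_0^\infty(\mathbb{R}^d)$ supported in $\{K_1/2 \le |\xi| \le 2K_2\}$ with $\varphi \equiv 1$ on $\{K_1 \le |\xi| \le K_2\}$, so that $\widehat{f}(\xi) = \varphi(2^{-j}\xi)\widehat{f}(\xi)$. Then $(-\Delta)^\alpha f = K_j \ast f$ with $\widehat{K_j}(\xi) = |\xi|^{2\alpha}\varphi(2^{-j}\xi)$, and a change of variables gives the scaling identity $K_j(x) = 2^{j(d+2\alpha)}K_0(2^j x)$, where $K_0 := \bigl(|\xi|^{2\alpha}\varphi(\xi)\bigr)^\vee$ lies in $\mathcal{S}(\mathbb{R}^d)$ because $|\xi|^{2\alpha}\varphi(\xi)$ is smooth and compactly supported. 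Choosing $r$ with $1/r = 1 + 1/q - 1/p$ and applying Young's convolution inequality yields
\[ \|(-\Delta)^\alpha f\|_{L^q} \le \|K_j\|_{L^r}\|f\|_{L^p} = C\,2^{j(2\alpha + d(1/p - 1/q))}\|f\|_{L^p}, \]
which is the upper bound. The matching lower bound in part (2) is obtained symmetrically by inverting the multiplier on the annulus: the function $|\xi|^{-2\alpha}\varphi(\xi)$ again belongs to $C_0^\infty$, so $f = \widetilde{K_j}\ast(-\Delta)^\alpha f$ with $\widetilde{K_j}$ another rescaled Schwartz kernel, and Young's inequality with $p=q$ delivers $\|f\|_{L^q}\le C\,2^{-2\alpha j}\|(-\Delta)^\alpha f\|_{L^q}$. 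For part (1), I would write $f = \Delta_{-1}f + \sum_{k=0}^{j+N}\Delta_k f$ with $N$ chosen so that $2^{N-1} > K$, apply part (2) to each block $\Delta_k f$ with $k\ge 0$, and sum the resulting geometric-type series; the top scale $k \sim j$ dominates and reproduces the claimed bound.

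The step I expect to be the main obstacle is the low-frequency piece $(-\Delta)^\alpha \Delta_{-1}f$ in part (1), since the multiplier $|\xi|^{2\alpha}\widehat{\Psi}(\xi)$ fails to be smooth at the origin whenever $\alpha$ is not a non-negative integer, and one cannot simply quote the annulus argument. I would handle it by observing that this multiplier is continuous, compactly supported, and vanishes to order $2\alpha$ at $\xi = 0$; a standard computation (splitting the integrand near and away from the origin and integrating by parts in the smooth region) then shows that its inverse Fourier transform decays at infinity like $|x|^{-d-2\alpha}$ while remaining bounded near the origin, hence lies in $L^1(\mathbb{R}^d)$. Young's inequality then controls $(-\Delta)^\alpha \Delta_{-1}f$ in $L^q$ by a fixed multiple of $\|f\|_{L^p}$, which is absorbed into the constant in the final estimate.
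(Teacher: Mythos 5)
The paper states this proposition without proof; it is quoted as standard background (the reader is referred to the books cited at the start of Section \ref{BesovComm}), so there is no in-paper argument to compare against. Your proof of part (2) is exactly the standard one: cut off with $\varphi(2^{-j}\cdot)$, observe that $|\xi|^{\pm 2\alpha}\varphi(\xi)\in C_0^\infty$ because the support avoids the origin, exploit the dilation identity $K_j(x)=2^{j(d+2\alpha)}K_0(2^jx)$, and apply Young's inequality with $1/r=1+1/q-1/p$; the inversion trick for the lower bound is also correct. That part is complete.

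Part (1) as you have set it up has a genuine, though repairable, gap. By routing the argument through the \emph{inhomogeneous} Littlewood--Paley decomposition you anchor the low frequencies at the fixed scale $1$: your estimate for the block $\Delta_{-1}f$ is $\|(-\Delta)^\alpha\Delta_{-1}f\|_{L^q}\le C\,\|f\|_{L^p}$ with a constant independent of $j$, and when $j$ is a large negative integer and $2\alpha+d(1/p-1/q)>0$ this cannot be absorbed into $C\,2^{j(2\alpha+d(1/p-1/q))}\|f\|_{L^p}$, which tends to $0$ as $j\to-\infty$. The statement, however, is asserted (and is true) for every integer $j$. The fix is to dispense with the decomposition entirely and run your part (2) argument directly on the ball: choose $\phi\in C_0^\infty$ with $\phi\equiv 1$ on $\{|\xi|\le K\}$, write $(-\Delta)^\alpha f=h_j\ast f$ with $\widehat{h_j}(\xi)=|\xi|^{2\alpha}\phi(2^{-j}\xi)=2^{2\alpha j}\widehat{h_0}(2^{-j}\xi)$, and note that $h_0=(|\xi|^{2\alpha}\phi(\xi))^\vee$ lies in every $L^r$, $r\in[1,\infty]$, by precisely the decay analysis ($|h_0(x)|\lesssim(1+|x|)^{-d-2\alpha}$ for $\alpha>0$) that you already carry out for the $\Delta_{-1}$ piece; Young's inequality then gives part (1) for all $j\in\mathbb{Z}$ in one stroke. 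A second, much smaller point: your geometric summation ``dominated by the top scale'' requires $2\alpha+d(1/p-1/q)>0$ strictly; the degenerate case $\alpha=0$, $p=q$ must be noted as trivial, since there the series produces a factor of order $j$ rather than a constant.
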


\vskip .1in
The rest of this section is devoted to the proof of a commutator estimate. We need the
following lemma.
\begin{lemma} \label{ccces}
Let $\delta \in (0,1)$. Let $q, \, q_1, \,q_2, r_1, r_2 \in [1, \infty]$ satisfying
$$
\frac1{q_1} + \frac1 {q_2} = \frac1q , \qquad  \frac1{r_1} + \frac1{r_2} =1.
$$
If $f\in \mathring{B}^\delta_{q_1, r_1}(\mathbb{R}^d)$, $|x|^{\delta+\frac{d}{r_1}} \phi \in L^{r_2}(\mathbb{R}^d)$,  and $g\in L^{q_2}(\mathbb{R}^d)$, then
\begin{equation}\label{conin}
\|\phi\ast ( fg) - f \phi\ast g\|_{L^q} \le C\, \||x|^{\delta+\frac{d}{r_1}} \phi\|_{L^{r_2}} \, \|f\|_{\mathring{B}^\delta_{q_1, r_1}} \,\|g\|_{L^{q_2}},
\end{equation}
where $C$ is a constant independent of $f, g$ and $h$.  In the borderline case when $\delta =1$, (\ref{conin}) still holds if $\|f\|_{\mathring{B}^\delta_{q_1, r_1}}$ is replaced by $\|\nabla f\|_{L^{q_1}}$ and $r_2=1$.
\end{lemma}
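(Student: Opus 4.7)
The plan is to reduce the commutator to a first-order difference of $f$ and then exploit the difference-quotient characterization of $\mathring{B}^\delta_{q_1,r_1}$ that was recalled just above the lemma. First I would write the commutator pointwise, after the change of variables $y\mapsto x-z$, as
\[
  \phi\ast(fg)(x) - f(x)\,\phi\ast g(x) = \int_{\mathbb{R}^d} \phi(z)\,[f(x-z)-f(x)]\,g(x-z)\,dz,
\]
which is the standard way of turning a convolution commutator into a weighted integral of increments of $f$.

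Next, take the $L^q_x$ norm of both sides and move it inside the $dz$ integral by Minkowski's integral inequality. With $z$ fixed, Hölder's inequality on $\mathbb{R}^d_x$ with the exponent relation $1/q=1/q_1+1/q_2$, together with translation invariance of $\|g\|_{L^{q_2}}$, gives
\[
  \|[f(\cdot-z)-f(\cdot)]\,g(\cdot-z)\|_{L^q_x} \le \|f(\cdot-z)-f(\cdot)\|_{L^{q_1}_x}\,\|g\|_{L^{q_2}}.
\]
Thus the whole estimate reduces to bounding $\int_{\mathbb{R}^d} |\phi(z)|\,\|f(\cdot-z)-f(\cdot)\|_{L^{q_1}}\,dz$.

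The decisive step is then to split the integrand as the product
\[
  \bigl(|z|^{\delta+d/r_1}|\phi(z)|\bigr)\cdot\bigl(|z|^{-\delta-d/r_1}\|f(\cdot-z)-f(\cdot)\|_{L^{q_1}}\bigr),
\]
and apply Hölder in $z$ with exponents $r_2$ and $r_1$. The first factor is in $L^{r_2}_z$ by assumption and contributes $\||z|^{\delta+d/r_1}\phi\|_{L^{r_2}}$ directly; the $L^{r_1}_z$ norm of the second factor is exactly
\[
  \left(\int_{\mathbb{R}^d} \frac{\|f(\cdot-z)-f(\cdot)\|_{L^{q_1}}^{r_1}}{|z|^{d+\delta r_1}}\,dz\right)^{1/r_1},
\]
which is equivalent to $\|f\|_{\mathring{B}^\delta_{q_1,r_1}}$ for $\delta\in(0,1)$ by the difference-quotient characterization stated earlier in this section. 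Combining the three bounds yields \eqref{conin}.

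For the endpoint $\delta=1$ the above difference-quotient characterization is no longer equivalent to the Littlewood--Paley Besov norm, so I would not try to salvage the same argument. Instead I would use the elementary bound $\|f(\cdot-z)-f(\cdot)\|_{L^{q_1}}\le |z|\,\|\nabla f\|_{L^{q_1}}$, coming from the fundamental theorem of calculus applied to translations of $f$. Since in this case one takes $r_2=1$ (hence $r_1=\infty$ and $d/r_1=0$), the weight $|z|^{\delta+d/r_1}$ is simply $|z|$, and the remaining $z$-integral collapses directly to $\||z|\phi\|_{L^1}$ with no further use of Hölder. The only step that requires care, and which I regard as the main technical point, is the verification that the difference-quotient seminorm really coincides with $\|f\|_{\mathring{B}^\delta_{q_1,r_1}}$ at the full range of endpoints $q_1,r_1\in[1,\infty]$; once that equivalence is in hand the proof is just a two-step Hölder argument.
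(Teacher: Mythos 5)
Your proof is correct and follows essentially the same route as the paper's: rewrite the commutator as an integral of $\phi(z)\,(f(x)-f(x-z))\,g(x-z)$, apply Minkowski and H\"older in $x$ with $1/q=1/q_1+1/q_2$, then H\"older in $z$ with exponents $r_1,r_2$ after inserting the weight $|z|^{\delta+d/r_1}$, and identify the resulting seminorm with $\|f\|_{\mathring{B}^\delta_{q_1,r_1}}$ via the difference-quotient characterization (which the paper records for $s\in(0,1)$ just before the lemma), with the same $|z|\,\|\nabla f\|_{L^{q_1}}$ bound handling the $\delta=1$, $r_2=1$ endpoint.
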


The case when $\delta=1$, $q_1=q$ and $q_2=\infty$ was obtained in \cite[p.2153]{HKR1}. The fractional case with $\delta\in (0,1)$, $q_1=q$ and $q_2=\infty$ was obtained by Chae and Wu in \cite{ChaeWu}. Here we need to make use of the more general case when $q_1>q$.

\vskip .1in
\begin{proof}[Proof of Lemma \ref{ccces}]
By Minkowski's inequality followed by H\"{o}lder's inequality, for any $q\in [1,\infty]$,
\begin{eqnarray*}
\|\phi\ast (fg) -f (\phi\ast g)\|_{L^q}
&=&\left[\int \left|\int \phi(z) \, (f(x) -f(x-z)) g(x-z) \,dz\right|^q \,dx\right]^{1/q} \\
&\le& \int |\phi(z)| \, \|f(\cdot)-f(\cdot-z))\|_{L^{q_1}} \,\|g\|_{L^{q_2}} dz\\
&\le& \|g\|_{L^{q_2}} \left\| \frac{\|f(\cdot)-f(\cdot-z))\|_{L^{q_1}}}{|z|^{\delta+\frac{d}{r_1}}} \right\|_{L^{r_1}}\, \left\||z|^{\delta+\frac{d}{r_1}} |\phi(z)| \right\|_{L^{r_2}}
\end{eqnarray*}
\eqref{conin} then follows from the definition of $\mathring{B}^\delta_{q_1,r_1}$.
\end{proof}

\vskip .1in
\begin{prop} \label{cmmu}
Let $\alpha\in (0,1)$. Let $s\in (0,1)$ and $\delta\in (0,1)$ satisfy $s+1-\alpha- \delta<0$.
Let $q\in [2, \infty)$, $r\in [1, \infty]$, $q_1, q_2\in [2, \infty]$ satisfy $\frac1q =\frac1{q_1} + \frac1{q_2}$. Then, for a constant $C$,
\begin{equation} \label{cmmb}
\|[\mathcal{R}_\alpha, f] g\|_{B^{s}_{q,r}} \le C\, \|f\|_{{B}^\delta_{q_1, \infty}}\, \|g\|_{B^{s+1-\alpha-\delta}_{q_2,r}}.
\end{equation}
When $\delta=1$, (\ref{cmmb}) is still valid if $\|f\|_{B^\delta_{q_1, \infty}}$ is replaced by $\|\nabla f\|_{L^{p_1}}$.
\end{prop}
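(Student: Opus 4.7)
The plan is to use Bony's paraproduct decomposition and apply Lemma~\ref{ccces} to suitably localized pieces of $\mathcal{R}_\alpha$. Writing $fg = T_f g + T_g f + R(f,g)$ in the standard way and using that $\mathcal{R}_\alpha$ commutes with $\Delta_j$, the commutator splits as
$$[\mathcal{R}_\alpha, f]\,g \;=\; \underbrace{\sum_j [\mathcal{R}_\alpha,\, S_{j-1}f]\Delta_j g}_{I} \;+\; \underbrace{\mathcal{R}_\alpha T_g f - T_{\mathcal{R}_\alpha g}f}_{II} \;+\; \underbrace{\mathcal{R}_\alpha R(f,g) - R(f,\mathcal{R}_\alpha g)}_{III}.$$
The bulk of the work is on the main paraproduct commutator $I$; the other two pieces are handled by standard paraproduct estimates.

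For $I$, I replace $\Delta_j g$ by $\widetilde{\Delta}_j\Delta_j g$ with $\widetilde{\Delta}_j$ a slightly fatter Fourier cutoff, so that $\mathcal{R}_\alpha\widetilde{\Delta}_j$ is convolution with a Schwartz kernel $K_j = \mathcal{F}^{-1}(\widetilde{\Phi}_j(\xi)|\xi|^{-\alpha}i\xi_1)$ satisfying the self-similar scaling $K_j(x) = 2^{2j+j(1-\alpha)}K_0(2^j x)$. A change of variables gives $\||x|^\delta K_j\|_{L^1} \le C\,2^{j(1-\alpha-\delta)}$. Applying Lemma~\ref{ccces} with $r_1=\infty$, $r_2=1$ to the convolution-type commutator yields
$$\|[\mathcal{R}_\alpha,\, S_{j-1}f]\Delta_j g\|_{L^q} \;\le\; C\,2^{j(1-\alpha-\delta)}\,\|f\|_{B^\delta_{q_1,\infty}}\,\|\Delta_j g\|_{L^{q_2}},$$
where the uniform-in-$j$ bound $\|S_{j-1}f\|_{\mathring{B}^\delta_{q_1,\infty}} \lesssim \|f\|_{B^\delta_{q_1,\infty}}$ follows from the difference-quotient characterization of Besov norms (splitting the Littlewood--Paley sum at the scale $|t|^{-1}$). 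Multiplying by $2^{ks}$, observing that only $j\ge k-c$ contribute to $\Delta_k I$ by Fourier support, and invoking discrete Young's inequality on $\mathbb{Z}$ (whose convergence requires $s>0$, which is in the hypotheses) produces $\|I\|_{B^s_{q,r}} \le C\,\|f\|_{B^\delta_{q_1,\infty}}\,\|g\|_{B^{s+1-\alpha-\delta}_{q_2,r}}$.

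For $II$, I bound the two paraproducts independently. Writing $\|\Delta_l g\|_{L^{q_2}} = 2^{-l(s+1-\alpha-\delta)}\,b_l$ with $(b_l)\in\ell^r$, Bernstein's inequality plus discrete Young's inequality give $\|S_{j-1}\mathcal{R}_\alpha g\|_{L^{q_2}} \le C\,2^{j(\delta-s)}\widetilde{b}_j$ and $\|S_{j-1}g\|_{L^{q_2}} \le C\,2^{-j(s+1-\alpha-\delta)}\widetilde{b}_j'$ with $(\widetilde{b}_j), (\widetilde{b}_j')\in\ell^r$; here $\delta-s>1-\alpha>0$ and $-(s+1-\alpha-\delta)>0$ are both supplied by the hypothesis $s+1-\alpha-\delta<0$. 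Combined with $\|\Delta_j f\|_{L^{q_1}} \le 2^{-j\delta}\|f\|_{B^\delta_{q_1,\infty}}$ and an $\ell^r$-sum over $k$, this yields the desired bound for each of $T_{\mathcal{R}_\alpha g}f$ and $\mathcal{R}_\alpha T_g f$. For $III$, the inequality $\delta + (s+1-\alpha-\delta) = s+1-\alpha > 0$ allows the standard remainder estimate to place $R(f,g) \in B^{s+1-\alpha}_{q,r}$, and applying the order $1-\alpha$ operator $\mathcal{R}_\alpha$ lands this in $B^s_{q,r}$; $R(f,\mathcal{R}_\alpha g)$ is parallel, via $\mathcal{R}_\alpha g \in B^{s-\delta}_{q_2,r}$ together with $\delta + (s-\delta) = s > 0$.

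The principal obstacle is the careful analysis of $I$: identifying the correct kernel normalization for $\mathcal{R}_\alpha$ localized to frequency $2^j$, and securing the uniform bound on $\|S_{j-1}f\|_{\mathring{B}^\delta_{q_1,\infty}}$ needed to apply Lemma~\ref{ccces} with the sharp $2^{j(1-\alpha-\delta)}$-dependence. The borderline case $\delta=1$ proceeds identically via the corresponding statement of Lemma~\ref{ccces}, with $\|\nabla f\|_{L^{q_1}}$ in place of $\|f\|_{\mathring{B}^1_{q_1,\infty}}$ throughout.
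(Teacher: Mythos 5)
Your proof is correct and follows essentially the same route as the paper's: the main term (the commutator against the paraproduct $T_f g$) is handled exactly as in the paper, via the scaling of the frequency-localized convolution kernel of $\mathcal{R}_\alpha$ together with Lemma \ref{ccces}, and the remaining pieces are controlled with Bernstein's inequality using the sign conditions $s+1-\alpha-\delta<0$ and $s>0$, just as in the paper's treatment of its $J_2$ and $J_3$. The only difference is bookkeeping — you discard the commutator structure in the $T_g f$ and remainder pieces and bound each half separately, which is effectively what the paper's estimates do anyway.
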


\begin{proof}
Let $k\ge -1$ be an integer. By the notion of paraproducts,
we write
$$
\Delta_k [\mathcal{R}_\alpha, f] g  = J_1 + J_2 + J_3,
$$
where
\begin{eqnarray*}
J_1 &=& \sum_{|j-k|\le 2} \Delta_k \left(\mathcal{R}_\alpha (S_{j-1} f \, \Delta_j g) - S_{j-1} f \, \mathcal{R}_\alpha \Delta_j g \right), \\
J_2 &=& \sum_{|j-k|\le 2} \Delta_k \left(\mathcal{R}_\alpha (\Delta_j f\, S_{j-1} g) -\Delta_j  f \, \mathcal{R}_\alpha S_{j-1} g\right), \\
J_3 &=& \sum_{j\ge k-1} \Delta_k \left(\mathcal{R}_\alpha (\Delta_j f\, \widetilde{\Delta}_j  g) -\Delta_j  f \, \mathcal{R}_\alpha \widetilde{\Delta}_j g \right)
\end{eqnarray*}
with $\widetilde{\Delta}_j = \Delta_{j-1} + \Delta_j + \Delta_{j+1}$. We first note that, if the Fourier transform of $F$ is supported in the annulus around radius $2^j$, then $\mathcal{R}_\alpha F $ can be represented as a convolution,
\begin{equation}\label{rare}
\mathcal{R}_\alpha F = h_j \ast F, \qquad h_j(x) = 2^{(d+1-\alpha) j}\, h_0(2^j x)
\end{equation}
for a function $h_0$ in the Schwartz class $\mathcal{S}$. This can be obtained by simply examining the Fourier transform of $\mathcal{R}_\alpha F$. By the definition of $B^s_{q,r}$,
\begin{eqnarray}
\left\|[\mathcal{R}_\alpha, f] g \right\|_{B^s_{q,r}} &=& \left\|2^{sk} \|\Delta_k [\mathcal{R}_\alpha, f] g\|_{L^q} \right\|_{l^r} \nonumber \\
&\le& \left\|2^{sk} \|J_1\|_{L^q} \right\|_{l^r} + \left\|2^{sk} \|J_2\|_{L^q} \right\|_{l^r} + \left\|2^{sk} \|J_3\|_{L^q} \right\|_{l^r}. \label{root}
\end{eqnarray}
Applying Lemma \ref{ccces}, we find
\begin{eqnarray*}
\|J_1\|_{L^q} &\le& C\, 2^{(1-\alpha) k}\,\||x|^\delta  2^{dk}  h_0(2^k x)\|_{L^1} \, \|S_{j-1} f\|_{\mathring{B}^\delta_{q_1, \infty}}\, \|\Delta_j g\|_{L^{q_2}} \nonumber\\
&\le& C\, 2^{(1-\alpha-\delta) k}\,\|f\|_{\mathring{B}^\delta_{q_1, \infty}}\, \|\Delta_j g\|_{L^{q_2}}.
\end{eqnarray*}
Thus,
\begin{eqnarray}
\left\|2^{sk} \|J_1\|_{L^q} \right\|_{l^r} \le C\, \|f\|_{\mathring{B}^\delta_{q_1, \infty}}\, \|g\|_{B^{s+1-\alpha-\delta}_{q_2,r}}.\label{j1es}
\end{eqnarray}
By Bernstein's inequality, we have
\begin{eqnarray*}
\|J_2\|_{L^q} &\le& C\, 2^{(1-\alpha-\delta)j}
\|f\|_{{B}^\delta_{q_1, \infty}}\, \|S_{j-1} g\|_{L^{q_2}} \nonumber\\
&\le& C\, 2^{(1-\alpha-\delta)j}\,\|f\|_{{B}^\delta_{q_1, \infty}}\,
\sum_{m\le k-2} \|\Delta_m g\|_{L^{q_2}}\\
&\le& C\, \|f\|_{{B}^\delta_{q_1, \infty}}\,
\sum_{m\le k-2} 2^{(1-\alpha-\delta) (k-m)}\, 2^{(1-\alpha-\delta) m}\|\Delta_m g\|_{L^{q_2}}.
\end{eqnarray*}
Since  $s+1-\alpha-\delta<0$, we obtain, by applying Young's inequality for series,
\begin{eqnarray}
\left\|2^{sk} \|J_2\|_{L^q} \right\|_{l^r} \le C \, \|f\|_{{B}^\delta_{q_1, \infty}}\, \|g\|_{B^{s+1-\alpha-\delta}_{q_2,r}}. \label{j2es}
\end{eqnarray}
Similarly, we have
\begin{eqnarray*}
\|J_3\|_{L^q} &\le& C\, \sum_{j\ge k-1} 2^{(1-\alpha-\delta) j}\,\|f\|_{{B}^\delta_{q_1, \infty}}\,\|\Delta_j g\|_{L^{q_2}}.
\end{eqnarray*}
Therefore, for $s>0$, by Young's inequality for series,
\begin{eqnarray}
\left\|2^{sk} \|J_3\|_{L^q} \right\|_{l^r} &\le&  C\, \left\|2^{sk} \sum_{j\ge k-1} 2^{(1-\alpha-\delta) j}\,\|f\|_{B^\delta_{q_1, \infty}}\,\|\Delta_j g\|_{L^{q_2}}\right\|_{l^r} \nonumber \\
&\le& C\, \|f\|_{{B}^\delta_{q_1, \infty}}\, \|g\|_{B^{s+1-\alpha-\delta}_{q_2,r}}.\label{j3es}
\end{eqnarray}
Combining (\ref{root}), (\ref{j1es}), (\ref{j2es}) and (\ref{j3es}), we obtain the desired bound in (\ref{cmmb}). This completes the proof of Proposition \ref{cmmu}.
\end{proof}

\vskip .1in
We have used a simple inequality, namely (\ref{simin}) in the proof of
Theorem \ref{glohs} and we now prove it. Its proof is deferred to this
section since it involves the Littlewood-Paley decomposition and
Bernstein's inequality described above. By Bernstein's inequality,
\begin{eqnarray*}
\|\na \nabla^\perp \Lambda^{-3+\beta} \pp_{1} \theta^\epsilon\|_{L^\infty}
&\le& \|\Delta_{-1} \na \nabla^\perp \Lambda^{-3+\beta} \pp_{1} \theta^\epsilon\|_{L^\infty}+ \sum_{j=0}^\infty \|\Delta_{j} \na \nabla^\perp \Lambda^{-3+\beta} \pp_{1} \theta^\epsilon\|_{L^\infty}\\
&\le& C\, \|\Delta_{-1} \theta^\epsilon\|_{L^2}
+ \sum_{j=0}^\infty  2^{\beta j} \|\Delta_j\theta^\epsilon\|_{L^\infty} \\
&\le& C\, \|\theta^\epsilon\|_{L^2}
+ \sum_{j=0}^\infty  2^{(\beta-1) j} \|\nabla \Delta_j\theta^\epsilon\|_{L^\infty},
\end{eqnarray*}
which reduces to (\ref{simin}) by recalling that $\beta\in(0,1)$.

\vskip .4in
\section{Global $L^2$-bound for $G$}
\label{GL2}

This section establishes a global {\it a priori} bound for $\|G\|_{L^2}$.
Recall that
\begin{equation} \label{Gdd}
G= \omega - \mathcal{R}_\alpha \theta \quad\mbox{with} \quad \mathcal{R}_\alpha = \Lambda^{-\alpha}\partial_{1}
\end{equation}
and $G$ satisfies
\begin{equation} \label{Geq0}
\partial_t G + u\cdot\nabla G + \Lambda^\alpha G = [\mathcal{R}_\alpha, u\cdot\nabla] \theta + \Lambda^{1-2\alpha} \pp_1 \th.
\end{equation}
This global bound is valid for any $\frac45<\alpha<1$.

\begin{thm} \label{l2b}
Assume $(u_0, \th_0)$ satisfies the assumptions stated in Theorem \ref{main}. Let $(u,
\th)$ be the corresponding solution of (\ref{BQE}). If $\frac45 < \alpha < 1$, then $G$ defined in
(\ref{Gdd}) has a global $L^2$ bound, namely for any $T>0$ and $t\le T$,
$$
\|G(t)\|_{L^2}^2 + \int_0^t \|\Lambda^{\frac{\alpha}{2}} G(\tau)\|_{L^2}^2 \,d\tau \le  C (T, u_0, \theta_0),
$$
where $C (T, u_0, \theta_0)$ is a constant depending on $T$ and $\|u_0\|_{L^2}$ and $\|\theta_0\|_{L^2\cap L^\infty}$ only.
\end{thm}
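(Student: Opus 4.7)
I would carry out a direct $L^2$ energy estimate on equation \eqref{Geq0}. Pairing with $G$ in $L^2$ and using $\na\cdot u=0$ to eliminate the transport term yields
\begin{equation*}
\tfrac12\tfrac{d}{dt}\|G\|_{L^2}^2 + \|\Lambda^{\alpha/2}G\|_{L^2}^2 = I_1 + I_2,
\end{equation*}
with $I_1=\int[\mathcal{R}_\alpha,u\cdot\na]\th\,G\,dx$ and $I_2=\int\Lambda^{1-2\alpha}\pp_1\th\,G\,dx$. The goal will then be to absorb a small multiple of $\|\Lambda^{\alpha/2}G\|_{L^2}^2$ back into the dissipation and reduce matters to $\tfrac{d}{dt}\|G\|_{L^2}^2\le C(t)\|G\|_{L^2}^2+F(t)$ with $C,F\in L^1_t$, using only the bounds already available at this stage of the paper: $\|\th\|_{L^\infty_t(L^2\cap L^\infty)}$, $\|u\|_{L^\infty_t L^2}$, $\int_0^T\|\Lambda^{\beta/2}\th\|_{L^2}^2\,d\tau$, and $\int_0^T\|\Lambda^{\alpha/2}u\|_{L^2}^2\,d\tau$, from the basic $L^p$ maximum principle for $\th$ and the $L^2$ energy estimate for $u$.

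\textbf{The forcing $I_2$ and the origin of the $\alpha>4/5$ threshold.} I would handle $I_2$ by duality in inhomogeneous Sobolev spaces, $|I_2|\le \|\Lambda^{1-2\alpha}\pp_1\th\|_{H^{-\alpha/2}}\|G\|_{H^{\alpha/2}}$. The Fourier symbol $(1+|\xi|^2)^{-\alpha/4}\,i\xi_1|\xi|^{1-2\alpha}$ is bounded in magnitude by $|\xi|^{2-2\alpha}\le 1$ on $\{|\xi|\le 1\}$ (since $\alpha<1$), and by $|\xi|^{2-5\alpha/2}$ on $\{|\xi|>1\}$; the latter is uniformly bounded precisely when $\alpha\ge 4/5$. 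Therefore $\|\Lambda^{1-2\alpha}\pp_1\th\|_{H^{-\alpha/2}}\le C\|\th\|_{L^2}\le C\|\th_0\|_{L^2}$, and Young's inequality yields $|I_2|\le \tfrac14\|\Lambda^{\alpha/2}G\|_{L^2}^2+C\|G\|_{L^2}^2+C\|\th_0\|_{L^2}^2$. This is where the hypothesis $\alpha>4/5$ enters most transparently.

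\textbf{The commutator $I_1$ and the main obstacle.} Since $\mathcal{R}_\alpha$ commutes with $\na$, one has $[\mathcal{R}_\alpha,u\cdot\na]\th=[\mathcal{R}_\alpha,u]\cdot\na\th$, and I would apply Proposition \ref{cmmu} with carefully chosen $s\in(0,1)$, $\delta\in(\beta,1]$, and $\tfrac1{q_1}+\tfrac1{q_2}=\tfrac12$, bounding the commutator either in $L^2$ directly or in $H^{-\alpha/2}$ paired against $\|G\|_{H^{\alpha/2}}$. The Besov norm of $u$ on the right is to be controlled through $\om=G+\mathcal{R}_\alpha\th$ together with the Biot-Savart law (so that $\|\na u\|_{L^2}\le\|\om\|_{L^2}\le\|G\|_{L^2}+\|\mathcal{R}_\alpha\th\|_{L^2}$, the last piece handled by a Riesz-type $L^p$ estimate on $\th$), while the Besov norm of $\na\th$ is controlled by interpolating $\|\th\|_{L^2\cap L^\infty}$ against the time-integrated dissipation $\|\Lambda^{\beta/2}\th\|_{L^2}\in L^2_t$. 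The delicate point --- and what I expect to be the main obstacle --- is this Besov calibration: at this stage of the paper one is \emph{not} permitted to invoke the improved regularity $\|\na\th\|_{L^\infty}$ from Theorem \ref{glohs} or the eventual pointwise bound on $\om$ (both are established only later), so the indices in Proposition \ref{cmmu} have to be tuned so that only the crude global bounds listed above enter, and the constraint $\alpha>4/5$ is exactly what makes both $I_1$ and $I_2$ close simultaneously. Once $I_1$ is brought to the form $\tfrac14\|\Lambda^{\alpha/2}G\|_{L^2}^2+C(t)\|G\|_{L^2}^2+F(t)$ with $C,F\in L^1([0,T])$, Gr\"onwall's inequality finishes the argument and yields the stated bound on $\|G(t)\|_{L^2}^2+\int_0^t\|\Lambda^{\alpha/2}G(\tau)\|_{L^2}^2\,d\tau$.
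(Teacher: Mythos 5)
Your energy-estimate setup and your treatment of the linear forcing term $I_2$ are sound and essentially match the paper (the paper bounds $|J_2|\le\|\th_0\|_{L^2}\|\Lambda^{2-2\alpha}G\|_{L^2}$ and interpolates $\Lambda^{2-2\alpha}$ between $L^2$ and $\Lambda^{\alpha/2}$, which needs $2-2\alpha\le\alpha/2$, i.e.\ $\alpha\ge 4/5$ --- the same threshold you locate via the Fourier symbol). The problem is the commutator term $I_1$, which is the heart of the theorem and which you explicitly leave as an acknowledged ``calibration to be done.'' Worse, the mechanism you sketch for it would not close: you propose to control a Besov norm of $\na\th$ by interpolating $\|\th\|_{L^2\cap L^\infty}$ against the time-integrated dissipation $\|\Lambda^{\beta/2}\th\|_{L^2}$. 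Since $\beta=1-\alpha<1/5$, that dissipation supplies fewer than $1/10$ of a derivative on $\th$, nowhere near a full gradient; no choice of indices in Proposition \ref{cmmu} rescues an estimate that asks for positive regularity of $\th$, because at this stage of the paper no such regularity is available.

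The paper's actual argument places \emph{no} derivatives on $\th$ at all. Using $\na\cdot u=0$ one writes $[\mathcal{R}_\alpha,u\cdot\na]\th=\na\cdot[\mathcal{R}_\alpha,u]\th$ and integrates by parts so that
$|J_1|\le\|\Lambda^{\alpha/2}G\|_{L^2}\,\|[\mathcal{R}_\alpha,u]\th\|_{\mathring{H}^{1-\alpha/2}}$; then Proposition \ref{cmmu} is applied with $s=1-\tfrac{\alpha}{2}$, $\delta=\alpha$, $q_1=\tfrac{2}{\alpha}$, giving
$\|[\mathcal{R}_\alpha,u]\th\|_{H^{1-\alpha/2}}\le C\|u\|_{B^{\alpha}_{q_1,\infty}}\|\th\|_{B^{2-\frac52\alpha}_{q_2,2}}$. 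The admissibility condition $s+1-\alpha-\delta=2-\tfrac52\alpha<0$ is exactly $\alpha>4/5$, and because the resulting Besov index on $\th$ is \emph{negative}, one has the embedding $L^{q_2}\hookrightarrow B^{2-\frac52\alpha}_{q_2,2}$ and only $\|\th_0\|_{L^{q_2}}$ is needed. Finally $\|u\|_{B^{\alpha}_{2/\alpha,\infty}}\le C\|\Lambda^{\alpha-1}\om\|_{L^{2/\alpha}}\le C\|G\|_{L^2}+C\|\th_0\|_{L^{2/\alpha}}+C\|u\|_{L^2}$ by Hardy--Littlewood--Sobolev, after which Young and Gr\"onwall close the estimate. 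These steps --- the integration by parts transferring $\Lambda^{\alpha/2}$ onto $G$, the specific choice $s=1-\alpha/2$, $\delta=\alpha$ in the commutator estimate, and the negative-index Besov space for $\th$ --- are the missing content of your proposal.
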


\vskip .1in
To prove Theorem \ref{l2b}, the following elementary global {\it a priori} bounds will be used.
\begin{lemma}
Assume that $u_0$ and $\theta_0$ satisfy the conditions stated in Theorem \ref{main}.
Then the corresponding solution $(u, \th)$ of (\ref{BQE}) obey the global bounds,
for any $t>0$,
$$
\|\th(t)\|_{L^r} \le \|\th_0\|_{L^r} \qquad \mbox{for any $2\le r\le \infty$},
$$ $$
\|u(t)\|^2_{L^2} + \int_0^t \|\Lambda^{\frac{\alpha}{2}} u(\tau)\|_{L^2}^2 \,d\tau
\le \left(\|u_0\|^2_{L^2} + t \|\th_0\|^2_{L^2}\right)^2.
$$
\end{lemma}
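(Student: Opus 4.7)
The plan is to derive both bounds by standard energy-type estimates on \eqref{BQE}, using only the divergence-free condition and the positivity of the fractional Laplacian under $L^r$ testing.

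\textbf{Step 1 (temperature bound).} I would test the temperature equation $\pp_t\th + u\cdot\na\th + \Lambda^\beta\th=0$ against $|\th|^{r-2}\th$ and integrate in $x$. Because $\na\cdot u=0$, the transport term contributes zero. For the dissipation term I invoke the C\'ordoba--C\'ordoba pointwise inequality (the same one cited in the paper right after \eqref{lowbd}, due to \cite{CC}), which yields
$$
\int_{\mathbb{R}^2} |\th|^{r-2}\th\,\Lambda^\beta\th\,dx \;\ge\; \frac{1}{r}\int_{\mathbb{R}^2}\Lambda^\beta(|\th|^r)\,dx \;=\;0 \qquad (2\le r<\infty).
$$
Hence $\frac{d}{dt}\|\th\|_{L^r}^r\le 0$, giving $\|\th(t)\|_{L^r}\le\|\th_0\|_{L^r}$ for every finite $r\ge 2$. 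Sending $r\to\infty$ yields the $L^\infty$ bound.

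\textbf{Step 2 (velocity bound).} I take the $L^2$ inner product of the velocity equation in \eqref{BQE} with $u$. The pressure term vanishes because $\na\cdot u=0$, as does the transport term, and the dissipation yields $\|\Lambda^{\alpha/2}u\|_{L^2}^2$. The remaining forcing is handled by Cauchy--Schwarz:
$$
\Bigl|\int_{\mathbb{R}^2}\th\,\mathbf{e}_2\cdot u\,dx\Bigr|\;\le\;\|\th\|_{L^2}\|u\|_{L^2}\;\le\;\|\th_0\|_{L^2}\|u\|_{L^2},
$$
where the last inequality uses Step~1 with $r=2$. Thus
$$
\frac12\frac{d}{dt}\|u\|_{L^2}^2 + \|\Lambda^{\alpha/2}u\|_{L^2}^2 \;\le\; \|\th_0\|_{L^2}\|u\|_{L^2}.
$$
Dropping the dissipative term gives $\frac{d}{dt}\|u\|_{L^2}\le \|\th_0\|_{L^2}$, hence $\|u(t)\|_{L^2}\le\|u_0\|_{L^2}+t\|\th_0\|_{L^2}$. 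Substituting this back and integrating the differential inequality over $[0,t]$ produces the stated bound for $\|u(t)\|_{L^2}^2+\int_0^t\|\Lambda^{\alpha/2}u\|_{L^2}^2\,d\tau$.

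\textbf{Obstacles.} There is essentially no analytic obstacle: both estimates are classical. The only point requiring a moment of care is justifying the C\'ordoba--C\'ordoba positivity inequality at the level of the solution (so that the $L^r$ estimate is rigorous); this is standard by approximating with convex, smooth truncations of $\rho\mapsto|\rho|^r$ and passing to the limit, or by invoking \eqref{lowbd} with $\Gamma(\rho)=|\rho|^{r/2}$. Once that is in place, the rest is routine Gronwall.
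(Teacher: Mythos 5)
Your argument is the standard energy estimate that the paper itself omits (the lemma is stated there as ``elementary'' with no proof), and both steps are correct: the C\'ordoba--C\'ordoba positivity for the $L^r$ decay of $\th$ and the Gronwall argument for $u$ are exactly what is intended. One small caveat: integrating your differential inequality yields $\|u(t)\|_{L^2}^2+2\int_0^t\|\Lambda^{\alpha/2}u\|_{L^2}^2\,d\tau\le\bigl(\|u_0\|_{L^2}+t\|\th_0\|_{L^2}\bigr)^2$, which is not literally the printed right-hand side $\bigl(\|u_0\|_{L^2}^2+t\|\th_0\|_{L^2}^2\bigr)^2$ (the two are not comparable for small data), but the printed form appears to be a typo and your bound is the correct one, sufficient for every later use of the lemma.
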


\vskip .1in
\begin{proof}[Proof of Theorem \ref{l2b}]  Taking the inner product of (\ref{Geq0}) with $G$, we obtain, after integration by parts,
\begin{equation}\label{g2root}
\frac12 \frac{d}{dt} \|G\|_{L^2}^2  + \|\Lambda^{\frac{\alpha}{2}} G\|_{L^2}^2  = J_1 + J_2,
\end{equation}
where
\begin{eqnarray*}
&& J_1 =\int G\, [\mathcal{R}_\alpha, u\cdot\nabla] \theta \,dx = \int G\, \nabla \cdot [\mathcal{R}_\alpha, u] \theta \,dx,\\
&& J_2 = \int G\,\Lambda^{1-2\alpha} \pp_1 \th\,dx.
\end{eqnarray*}
Let $q_1, q_2\in [2,\infty]$ satisfying $\frac1{q_1}+ \frac1{q_2}=\frac12$. By H\"{o}lder's inequality and Proposition \ref{cmmu},
\begin{eqnarray*}
|J_1| &\le& \|\Lambda^{\frac{\alpha}{2}} G\|_{L^2} \, \|\Lambda^{1-\frac{\alpha}{2}} \Lambda^{-1} \nabla\cdot [\mathcal{R}_\alpha, u] \theta\|_{L^2} \\
&\le& C\, \|\Lambda^{\frac{\alpha}{2}} G\|_{L^2}\|u\|_{{B}^\alpha_{q_1, \infty}}\, \|\theta\|_{B^{2-\frac52\alpha}_{q_2, 2}}.
\end{eqnarray*}
Setting $q_1 = \frac{2}{\alpha}$ and noticing that
$$
u =\na^\perp (-\Delta)^{-1} \om = \na^\perp (-\Delta)^{-1} (G + \Lambda^{-\alpha} \pp_1\th),
$$
we obtain, by the Hardy-Littlewood-Sobolev inequality,
\begin{eqnarray}
\|u\|_{\mathring{B}^\alpha_{q_1, \infty}} &\le& C\, \|\Lambda^\alpha u\|_{L^{q_1}}
\le  C\, \|\Lambda^{\alpha-1} \omega\|_{L^{q_1}} \nonumber \\ &\le&C\, \|\Lambda^{\alpha-1} G\|_{L^{q_1}} + C\,\|\theta\|_{L^{q_1}} \nonumber\\ &\le& C\, \|G\|_{L^2} + C\, \|\theta_0\|_{L^{q_1}}. \label{lau}
\end{eqnarray}
So, we get
$$
\|u\|_{{B}^\alpha_{q_1, \infty}}\le \|u\|_{\mathring{B}^\alpha_{q_1, \infty}}+C\|u\|_{L^2}\le C\|G\|_{L^2}
+ C\,\|\theta_0\|_{L^{q_1}}+C\|u\|_{L^2}.
$$
Thanks to the embedding $L^{q_2} \hookrightarrow B^{2-\frac52\alpha}_{q_2, 2}$ when $\alpha>\frac45$, we have
\begin{eqnarray}
 \|\theta\|_{B^{2-\frac52\alpha}_{q_2, 2}} \le C \, \|\theta_0\|_{L^{q_2}}. \label{lau2}
\end{eqnarray}
Therefore, by Young's inequality
$$
|J_1| \le \frac14 \, \|\Lambda^{\frac{\alpha}{2}} G\|_{L^2}^2  \, + C\, \|\theta_0\|_{L^{q_2}}^2 \|G\|^2_{L^2} + C\,\|\theta_0\|_{L^{q_2}}^2\left(\|\theta_0\|_{L^{q_1}}^2 + \|u_0\|^2_{L^2} + t^2 \|\th_0\|_{L^2}^2\right),
$$
where $C$'s are pure constants. Since $\frac45<\alpha<1$,
\begin{eqnarray*}
|J_2| &\le& \|\th_0\|_{L^2}\, \|\Lambda^{2-2\alpha} G\|_{L^2} \\
&\le& \|\th_0\|_{L^2}\,
\|\Lambda^{\frac{\alpha}{2}} G\|^{4(\frac1{\alpha}-1)}_{L^2} \, \|G\|^{5-\frac{4}{\alpha}}_{L^2}\\
&\le& \frac14 \, \|\Lambda^{\frac{\alpha}{2}} G\|_{L^2}^2  \, + C\,\|\th_0\|^{\frac{2\alpha}{5\alpha-4}}_{L^2} \|G\|^2_{L^2}.
\end{eqnarray*}
Inserting the bounds for $J_1$ and $J_2$ in (\ref{g2root}) and applying Gronwall's inequality yield the desired bound. This completes the proof of Theorem \ref{l2b}.
\end{proof}

\vskip .4in
\section{Global $L^q$-bound for $G$ with $2<q<q_0$}
\label{GLq}

This section proves a global bound for $\|G\|_{L^q}$ with $2<q<q_0$, where $q_0$ is specified in (\ref{q0}). This global bound is valid for any $\frac45<\alpha<1$.

\begin{thm}\label{lqb}
Let $\frac45 < \alpha < 1$ and $\alpha+\beta=1$. Consider (\ref{BQE}) with $u_0 \in H^2$ and $\theta_0\in B^2_{2,1}$. Let $(u, \theta)$ be the corresponding smooth solution. Assume that $2< q < q_0$ with $q_0$ given by
\begin{equation} \label{q0}
q_0 \equiv \frac{8-4\alpha}{8-7\alpha}.
\end{equation}
Then, for any $T>0$ and $t\le T$,
$$
\|G(t)\|_{L^q} \le  C (T, u_0, \theta_0),
$$
where $C (T, u_0, \theta_0)$ is a constant depending on $T$, $u_0$ and $\theta_0$ only.
\end{thm}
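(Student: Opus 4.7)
The plan is to parallel the $L^2$ argument of Theorem~\ref{l2b}, performing an $L^q$-energy estimate on the $G$-equation~\eqref{Geq0}. I would multiply by $|G|^{q-2}G$ and integrate; the transport drops out since $\nabla\cdot u=0$, and the Córdoba--Córdoba pointwise inequality yields $\int |G|^{q-2}G\,\Lambda^\alpha G\,dx \ge (2/q)\|\Lambda^{\alpha/2}|G|^{q/2}\|_{L^2}^2$. The key gain comes from the 2D Sobolev embedding $\dot H^{\alpha/2}\hookrightarrow L^{4/(2-\alpha)}$, which converts the dissipation into
$$
\|\Lambda^{\alpha/2}|G|^{q/2}\|_{L^2}^2 \;\gtrsim\; \|G\|_{L^{q^\ast}}^q, \qquad q^\ast=\tfrac{2q}{2-\alpha}.
$$
This extra integrability is the mechanism that drives all subsequent absorptions.

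For the commutator forcing $J_1=\int |G|^{q-2}G\,[\mathcal{R}_\alpha,u\cdot\nabla]\theta\,dx$, the plan is to rewrite $[\mathcal{R}_\alpha,u\cdot\nabla]\theta = \nabla\cdot[\mathcal{R}_\alpha,u]\theta$ (using $\nabla\cdot u=0$) and then invoke Proposition~\ref{cmmu}. The Besov norm of $u$ is controlled exactly as in the proof of Theorem~\ref{l2b}: via $u=\nabla^\perp\Delta^{-1}(G+\mathcal{R}_\alpha\theta)$ and Hardy--Littlewood--Sobolev, one gets $\|u\|_{B^\alpha_{q_1,\infty}}\lesssim \|G\|_{L^2}+\|\theta_0\|_{L^{q_1}}$, which is uniformly bounded on $[0,T]$ by Theorem~\ref{l2b}. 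Choosing $q_1=2/\alpha$ and $1/q=1/q_1+1/q_2$, a Hölder/Young split should produce $|J_1|\le \varepsilon\|\Lambda^{\alpha/2}|G|^{q/2}\|_{L^2}^2 + C(T)(\|G\|_{L^q}^q+1)$ for the $\varepsilon$ used to absorb against the dissipation.

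For the linear forcing $J_2=\int |G|^{q-2}G\,\Lambda^{1-2\alpha}\partial_1\theta\,dx$, I would integrate by parts to shift the fractional derivatives onto $|G|^{q-2}G$, obtaining a factor $\|\Lambda^{2-2\alpha}(|G|^{q-2}G)\|_{L^{p_2}}\|\theta\|_{L^{p_1}}$ after Hölder. A Kato--Ponce fractional Leibniz estimate and interpolation then express the $\Lambda^{2-2\alpha}$ factor in terms of $\|G\|_{L^q}$ and $\|\Lambda^{\alpha/2}|G|^{q/2}\|_{L^2}$; the inequality $2-2\alpha<\alpha/2$ (guaranteed by $\alpha>4/5$) is precisely what makes this interpolation legal and allows absorption into the dissipation, while $\|\theta\|_{L^{p_1}}\le\|\theta_0\|_{L^{p_1}}$ handles the temperature side. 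Combining the three estimates yields a differential inequality
$$
\tfrac{d}{dt}\|G\|_{L^q}^q + c\|G\|_{L^{q^\ast}}^q \;\le\; C(T)\bigl(\|G\|_{L^q}^q+1\bigr),
$$
to which Gronwall applies.

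The main obstacle is the tight exponent bookkeeping across the three estimates. The commutator bound requires $s+1-\alpha-\delta<0$ with $s,\delta\in(0,1)$; the Sobolev gain pins $q^\ast=2q/(2-\alpha)$; and the fractional Leibniz interpolation for $J_2$ must produce a single factor of $\|\Lambda^{\alpha/2}|G|^{q/2}\|_{L^2}$ rather than a higher power, otherwise absorption fails. Solving these compatibility conditions simultaneously is what forces $q$ to lie strictly below the threshold $q_0=(8-4\alpha)/(8-7\alpha)$ announced in the statement, with the margin collapsing as $\alpha\searrow 4/5$.
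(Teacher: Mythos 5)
Your proposal matches the paper's proof in all essentials: the $L^q$ energy estimate on \eqref{Geq0} with the C\'ordoba--C\'ordoba lower bound and the Sobolev gain $\|G\|_{L^{2q/(2-\alpha)}}^{q}$, Proposition \ref{cmmu} plus the $L^2$ theory of Theorem \ref{l2b} to control the commutator via $\|u\|_{B^\alpha_{q_1,\infty}}$ with $q_1=2/\alpha$, and the same exponent bookkeeping that produces $q_0=\frac{8-4\alpha}{8-7\alpha}$. The one (harmless) discrepancy is the form of the remainder: the constraint $2-s-2\alpha<0$ prevents lowering the Sobolev index of $G$ to zero, so the natural estimates leave a term $C\,\|G\|_{\dot H^{\alpha/2}}^{q/2}$ rather than $C(\|G\|_{L^q}^q+1)$; the paper closes by observing that this term is integrable on $[0,T]$ (since $q/2\le 2$) thanks to the dissipation integral in Theorem \ref{l2b}, and it uses its Lemma \ref{goody} in place of your Kato--Ponce step for $\|\Lambda^{s}(G|G|^{q-2})\|_{L^2}$.
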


\begin{proof}
Taking the inner product of (\ref{Geq0}) with $G |G|^{q-2}$, we have
\begin{equation}\label{rot}
\frac1q \frac{d}{dt} \|G\|_{L^q}^q  + \int G |G|^{q-2} \Lambda^\alpha G \,dx = K_1 + K_2,
\end{equation}
where
\begin{eqnarray*}
&& K_1 = \int G |G|^{q-2} \,[\mathcal{R}_\alpha, u\cdot\nabla]\theta\,dx,\\
&& K_2 = \int G |G|^{q-2}\, \Lambda^{1-2\alpha} \pp_1 \th\,dx.
\end{eqnarray*}
By a pointwise inequality for fractional Laplacians (see \cite{CC})
and a Sobolev embedding inequality,
$$
\int G |G|^{q-2} \Lambda^\alpha G \,dx \ge  C\, \int |\Lambda^{\frac{\alpha}{2}} |G|^{\frac{q}{2}}|^2 \,dx  \ge C_0\, \|G\|_{L^{\frac{2q}{2-\alpha}}}^q,
$$
where $C, C_0>0$ are constants. To bound $K_1$, we have for any $s\in (0,1)$,
\begin{eqnarray*}
|K_1| &\le& \left|\int \Lambda^s(G |G|^{q-2})\, \Lambda^{1-s}(\Lambda^{-1} \nabla\cdot [\mathcal{R}_\alpha, u]\theta)\,dx \right|\\
&\le& \|\Lambda^s(G |G|^{q-2})\|_{L^2} \,\, \|\Lambda^{1-s}([\mathcal{R}_\alpha, u]\theta) \|_{L^2}.
\end{eqnarray*}
By Proposition \ref{cmmu}, (\ref{lau}) and (\ref{lau2}), we obtain
\begin{eqnarray*}
\|\Lambda^{1-s} [\mathcal{R}_\alpha, u]\theta\|_{L^2} &=& \|[\mathcal{R}_\alpha, u]\theta\|_{\mathring{H}^{1-s}} \\
&\le& C\, \|u\|_{B^{\alpha}_{q_1, \infty}}\, \|\theta\|_{B^{2-s-2\alpha}_{q_2,2}} +C \, \|u\|_{L^2}\,\|\theta_0\|_{L^2}\\
&\le& C\,(\|G\|_{L^2} + \|\theta_0\|_{L^{q_1}}+\|u\|_{L^2})\|\th_0\|_{L^{q_2}} + C \, \|u\|_{L^2}\,\|\theta_0\|_{L^2} \equiv B(t),
\end{eqnarray*}
where $2-2\alpha<s$, $q_1 =\frac{2}{\alpha}$, $\frac1{q_1} +\frac1{q_2}=\frac12$ and
$B(t)$ is a smooth function of $t$ depending on $\|u_0\|_{L^2}$ and
$\|\th_0\|_{L^2\cap L^{q_1}\cap L^{q_2}}$ only.  Choosing $s$ satisfying
$$
2+s-\alpha-\frac{2(2-\alpha)}{q} =\frac{\alpha}{2}
$$
and applying Lemma \ref{goody} below, we obtain
\begin{eqnarray*}
\|\Lambda^s (G |G|^{q-2})\|_{L^2} \le C\, \|G\|^{q-2}_{L^{\frac{2q}{2-\alpha}}} \,\|G\|_{\mathring{H}^{2+s-\alpha-\frac{2(2-\alpha)}{q}}} = C\, \|G\|^{q-2}_{L^{\frac{2q}{2-\alpha}}}\,\|G\|_{\mathring{H}^{\frac{\alpha}{2}}}.
\end{eqnarray*}
Combining the estimates, we obtain, for any $t\le T$,
\begin{eqnarray*}
|K_1| \le B(t)\, \|G\|^{q-2}_{L^{\frac{2q}{2-\alpha}}}\,\|G\|_{\mathring{H}^{\frac{\alpha}{2}}}
\le \frac{C_0}{2} \|G\|^{q}_{L^{\frac{2q}{2-\alpha}}} + C \, \|G\|^{\frac{q}{2}}_{\mathring{H}^{\frac{\alpha}{2}}},
\end{eqnarray*}
where $C$ is a constant depending on $T$ and the norms of the initial data. Since
$2-2\alpha<s$,
\begin{eqnarray*}
|K_2| &\le& \|(1+\Lambda)^s (G |G|^{q-2})\|_{L^2} \|(1+\Lambda)^{2-2\alpha-s} \th\|_{L^2}\\
 &\le& C(\|\Lambda^s (G |G|^{q-2})\|_{L^2}+\|G\|_{L^2}^{q-1})\, \|\th_0\|_{L^2} \\
&\le& \frac{C_0}{2} \|G\|^{q}_{L^{\frac{2q}{2-\alpha}}} + C \, \|G\|^{\frac{q}{2}}_{\mathring{H}^{\frac{\alpha}{2}}}+C\|G\|_{L^2}^{q-1}\|\th_0\|_{L^2}.
\end{eqnarray*}
Inserting the estimates for $K_1$ and $K_2$ in (\ref{rot}) and using the fact in
Theorem \ref{l2b} that,
for $\frac{q}{2} \le 2$ and any $T>0$,
$$
\int_0^T \|G\|^{\frac{q}{2}}_{\mathring{H}^{\frac{\alpha}{2}}} \,dt <\infty,
$$
we then obtain the desired the global bound. The conditions on the indices
$$
q\ge 2, \qquad 2-s-2\alpha<0, \quad 2+s-\alpha-\frac{2(2-\alpha)}{q} =\frac{\alpha}{2}
$$
are fulfilled if
$$
\alpha >\frac45, \qquad 2\le q< q_0 \equiv \frac{8-4\alpha}{8-7\alpha}.
$$
This completes the proof of Theorem \ref{lqb}.
\end{proof}

\vskip .1in
We have used the following lemma in the proof of Theorem \ref{lqb}. This lemma
generalizes a previous inequality of \cite[p.2170]{HKR1}.
\begin{lemma} \label{goody}
Let $s\in (0,1)$, $\alpha\in (0,1)$ and $q\in[2,\infty)$. Then,
$$
\|\Lambda^s (G |G|^{q-2})\|_{L^2} \le C \|G\|^{q-2}_{L^{\frac{2q}{2-\alpha}}} \, \|G\|_{\mathring{B}^s_{\frac{2q}{2(2-\alpha)-q(1-\alpha)},2}}.
$$
Especially,
\begin{equation}\label{ert}
\|\Lambda^s (G |G|^{q-2})\|_{L^2} \le C \|G\|^{q-2}_{L^{\frac{2q}{2-\alpha}}} \, \|G\|_{\mathring{H}^{2+s-\alpha-\frac{2(2-\alpha)}{q}}}.
\end{equation}
\end{lemma}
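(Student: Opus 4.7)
The plan is to reduce the inequality to a difference-quotient characterization of $\mathring{B}^{s}_{2,2}=\mathring{H}^{s}$ for $s\in(0,1)$ and then estimate pointwise differences of $G|G|^{q-2}$ via a standard elementary inequality. Concretely, for $s\in(0,1)$ we have the Gagliardo-type identity
\[
\|\Lambda^{s}F\|_{L^{2}}^{2}\;\approx\;\int_{\mathbb{R}^{2}}\frac{\|F(\cdot+t)-F(\cdot)\|_{L^{2}}^{2}}{|t|^{2+2s}}\,dt,
\]
which is exactly the equivalent norm on $\mathring{B}^{s}_{2,2}$ recalled in Section \ref{BesovComm}. I would apply this with $F=G|G|^{q-2}$.

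For the pointwise step, for any $q\ge2$ one has the elementary inequality
\[
\bigl||a|^{q-2}a-|b|^{q-2}b\bigr|\;\le\;C\bigl(|a|^{q-2}+|b|^{q-2}\bigr)\,|a-b|.
\]
Using this with $a=G(x+t)$, $b=G(x)$ and then H\"older's inequality with exponents $p_{1}$ and $p_{2}$ satisfying $\tfrac1{p_{1}}+\tfrac1{p_{2}}=\tfrac12$, together with translation invariance of the Lebesgue norm, gives
\[
\|G|G|^{q-2}(\cdot+t)-G|G|^{q-2}(\cdot)\|_{L^{2}}\;\le\;C\,\|G\|_{L^{p_{1}(q-2)}}^{q-2}\,\|G(\cdot+t)-G(\cdot)\|_{L^{p_{2}}}.
\]
The next step is to choose the exponents to match the norms in the statement. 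Set $p_{1}(q-2)=\tfrac{2q}{2-\alpha}$, i.e.\ $p_{1}=\tfrac{2q}{(2-\alpha)(q-2)}$; then a direct computation gives
\[
\tfrac{1}{p_{2}}=\tfrac12-\tfrac{(2-\alpha)(q-2)}{2q}=\tfrac{2(2-\alpha)-q(1-\alpha)}{2q},
\qquad\text{so}\qquad p_{2}=\tfrac{2q}{2(2-\alpha)-q(1-\alpha)}.
\]
(The condition $q\ge2$ ensures $p_{2}\ge2$.) Plugging this back into the Gagliardo representation and recognising the resulting integral as the $\mathring{B}^{s}_{p_{2},2}$-norm yields the first bound
\[
\|\Lambda^{s}(G|G|^{q-2})\|_{L^{2}}\;\le\;C\,\|G\|_{L^{2q/(2-\alpha)}}^{q-2}\,\|G\|_{\mathring{B}^{s}_{p_{2},2}}.
\]

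For the specialisation \eqref{ert}, I would invoke the Sobolev embedding
\[
\mathring{H}^{\sigma}(\mathbb{R}^{2})\;\hookrightarrow\;\mathring{B}^{s}_{p_{2},2}(\mathbb{R}^{2}),\qquad
\sigma=s+1-\tfrac{2}{p_{2}}.
\]
Substituting $\tfrac{2}{p_{2}}=\tfrac{2(2-\alpha)}{q}-(1-\alpha)$ gives
\[
\sigma=s+1-\tfrac{2(2-\alpha)}{q}+(1-\alpha)=2+s-\alpha-\tfrac{2(2-\alpha)}{q},
\]
which is exactly the exponent appearing in \eqref{ert}. Combined with the previous inequality, this completes the argument.

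The only mildly delicate point is the bookkeeping of the exponents and the verification that $p_{2}\ge2$ (so that the Sobolev embedding is applied in the correct direction) and that $\sigma\in(0,\infty)$ makes sense in the regime where the lemma will be invoked; both follow from $q\ge2$ and $s\in(0,1)$. The elementary pointwise inequality and the Gagliardo-type norm are the real workhorses, and neither requires any further structure of $G$ beyond what is already at our disposal.
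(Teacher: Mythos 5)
Your proposal is correct and follows essentially the same route as the paper: the Gagliardo characterization of $\mathring{B}^{s}_{2,2}$, the elementary pointwise inequality for $a|a|^{q-2}-b|b|^{q-2}$, H\"older with the exponent pair you compute, and then the Sobolev-type embedding $\mathring{B}^{2+s-\alpha-\frac{2(2-\alpha)}{q}}_{2,2}\hookrightarrow \mathring{B}^{s}_{p_2,2}$ for \eqref{ert}. The exponent bookkeeping checks out, so there is nothing to add.
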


\begin{proof}
Identifying $\mathring{H}^s$ with the Besov space $\mathring{B}^s_{2,2}$, we have
$$
\|\Lambda^s (G |G|^{q-2})\|^2_{L^2} = \int \frac{\|G |G|^{q-2}(\cdot -z) -G |G|^{q-2}(\cdot)\|_{L^2}^2}{|z|^{2+2s}} \,dz.
$$
Using the simple inequality, for $q\ge 2$,
$$
|a |a|^{q-2} -b |b|^{q-2}| \le C\,|a-b| (|a|^{q-2} + |b|^{q-2})
$$
and H\"{o}lder's inequality, we have
$$
\|G |G|^{q-2}(\cdot -z) -G |G|^{q-2}(\cdot)\|_{L^2}^2 \le C \, \|G\|^{2(q-2)}_{L^{\frac{2q}{2-\alpha}}}\, \|G(\cdot -z) -G(\cdot)\|_{L^{\frac{2q}{2(2-\alpha)-q(1-\alpha)}}}^2.
$$
Therefore,
$$
\|\Lambda^s (G |G|^{q-2})\|^2_{L^2} \le C\, \|G\|^{2(q-2)}_{L^{\frac{2q}{2-\alpha}}}\,
\|G\|^2_{\mathring{B}^s_{\frac{2q}{2(2-\alpha)-q(1-\alpha)},2}}
$$
(\ref{ert}) holds due to the embedding $\mathring{B}^{2+s-\alpha-\frac{2(2-\alpha)}{q}}_{2,2} \hookrightarrow \mathring{B}^s_{\frac{2q}{2(2-\alpha)-q(1-\alpha)},2}$. This completes the proof of
Lemma \ref{goody}.
\end{proof}

\vskip .4in
\section{Proof of Theorem \ref{main} }

This section proves Theorem \ref{main}. To do so, we need more regularity for $G$.
By fully exploiting the dissipation, we are able to show in
Proposition \ref{smoth} below that
$G$ in $B^s_{q,\infty}$ with any $s\le 3\alpha-2$ and $2\le q<q_0$ is
bounded for all time.

\vskip .1in
\begin{prop} \label{smoth}
Consider the IVP (\ref{BQE}) with $\alpha_0<\alpha<1$. Assume the initial data $(u_0, \th_0)$ satisfies the conditions stated in Theorem \ref{main} and let $(u, \th)$ be
the corresponding solution. If the indices $s$ and $q$ satisfy
\begin{equation}\label{sqdef}
0< s \le 3\alpha -2, \quad \frac2{2\alpha-1}<q < q_0 \equiv \frac{8-4\alpha}{8-7\alpha},
\end{equation}
then $G$ obeys the global {\it a priori} bound, for any $T>0$,
$$
\|G\|_{L^\infty(0,T; B^s_{q,\infty})} \le C,
$$
where $C$ is a constant depending on $T$ and the initial norms only.
\end{prop}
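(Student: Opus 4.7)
The plan is to perform a Littlewood--Paley energy estimate on equation \eqref{Geq0} at each dyadic frequency $2^j$, extracting dissipation from $\Lambda^\alpha$ via the C\'ordoba--C\'ordoba pointwise inequality and controlling the forcing terms through Proposition \ref{cmmu} together with the $L^q$-bound on $G$ already established in Theorem \ref{lqb}. Specifically, I would apply $\Delta_j$ to \eqref{Geq0}, take the $L^q$-scalar product with $|\Delta_j G|^{q-2}\Delta_j G$, and use the lower bound
$$
\int |\Delta_j G|^{q-2}\Delta_j G\cdot \Lambda^\alpha \Delta_j G\,dx \ge c_0\, 2^{\alpha j}\|\Delta_j G\|_{L^q}^q,
$$
which, after dividing by $\|\Delta_j G\|_{L^q}^{q-1}$, yields
$$
\frac{d}{dt}\|\Delta_j G\|_{L^q} + c_0\, 2^{\alpha j}\|\Delta_j G\|_{L^q} \le \|[\Delta_j, u\cdot\nabla]G\|_{L^q} + \|\Delta_j\mathcal{F}\|_{L^q},
$$
where $\mathcal{F} = [\mathcal{R}_\alpha, u\cdot\nabla]\theta + \Lambda^{1-2\alpha}\partial_1\theta$ denotes the right-hand side of \eqref{Geq0}.

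The next step is to multiply by $2^{sj}$ and take the supremum over $j\ge -1$. The transport commutator $[\Delta_j, u\cdot\nabla]G$ is handled by a classical Besov commutator bound in terms of $\|\nabla u\|_{L^\infty}\|G\|_{B^s_{q,\infty}}$ plus lower-order contributions (with $\nabla u$ controlled by norms of $\omega = G + \mathcal{R}_\alpha\theta$ via Biot--Savart). For the main forcing, since $\nabla\cdot u = 0$, I would rewrite $[\mathcal{R}_\alpha, u\cdot\nabla]\theta = [\mathcal{R}_\alpha, u]\cdot\nabla\theta$ and apply Proposition \ref{cmmu} with $f = u$, $g = \nabla\theta$, and indices $\delta\in(0,1)$, $\tfrac{1}{q_1}+\tfrac{1}{q_2}=\tfrac{1}{q}$ satisfying $s+1-\alpha-\delta<0$:
$$
\|[\mathcal{R}_\alpha, u]\cdot\nabla\theta\|_{B^s_{q,\infty}} \le C\,\|u\|_{B^\delta_{q_1,\infty}}\,\|\nabla\theta\|_{B^{s+1-\alpha-\delta}_{q_2,\infty}}.
$$
The Biot--Savart identity reduces $\|u\|_{B^\delta_{q_1,\infty}}$ to $\|\omega\|_{B^{\delta-1}_{q_1,\infty}} \le \|G\|_{B^{\delta-1}_{q_1,\infty}} + C\,\|\theta\|_{B^{\delta-\alpha}_{q_1,\infty}}$; by Besov embedding and Theorem \ref{lqb}, the first piece is bounded by $\|G\|_{L^q}$, while $\theta$ is controlled by standard bounds on $\|\theta\|_{L^r}$ for $r\in[2,\infty]$ together with the regularity supplied by Theorem \ref{glohs}.

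The remaining low-order forcing term $\Lambda^{1-2\alpha}\partial_1\theta = \Lambda^{2-2\alpha}\mathcal{R}_\alpha\theta$ admits the trivial estimate
$$
\|\Lambda^{1-2\alpha}\partial_1\theta\|_{B^s_{q,\infty}} \le C\,\|\theta\|_{B^{s+2-2\alpha}_{q,\infty}},
$$
and the restriction $s\le 3\alpha-2$ ensures $s+2-2\alpha\le \alpha$, a regularity level comfortably covered by temperature bounds. Combining all contributions and using the dyadic dissipative factor $c_0\, 2^{\alpha j}$ (which guarantees uniform-in-$j$ summability), I would close the estimate with Gronwall's inequality applied to $\|G\|_{B^s_{q,\infty}}$.

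The main obstacle will be the simultaneous bookkeeping of the exponents: one must choose $(s, q, \delta, q_1, q_2)$ so that (i) Proposition \ref{cmmu} is applicable, in particular $s+1-\alpha-\delta<0$ with $s,\delta\in(0,1)$; (ii) the Biot--Savart reduction of $\|u\|_{B^\delta_{q_1,\infty}}$ is dominated by the already-established $\|G\|_{L^q}$-bound with $q\in\bigl(\tfrac{2}{2\alpha-1}, q_0\bigr)$; and (iii) the temperature regularity required on the right is genuinely available. It is precisely the compatibility of these three constraints that forces the quantitative threshold $\alpha > \alpha_0 = (23-\sqrt{145})/12$ and makes $s = 3\alpha - 2$ the sharp endpoint in the statement.
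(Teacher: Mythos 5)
Your overall skeleton (frequency-localized $L^q$ energy estimate, the C\'ordoba--C\'ordoba lower bound giving $c_0 2^{\alpha j}\|\Delta_j G\|_{L^q}^q$, Proposition \ref{cmmu} for the commutator, the $L^q$ bound from Theorem \ref{lqb}) matches the paper's, but the proposal contains a circularity that breaks the argument. You control the transport commutator $[\Delta_j,u\cdot\nabla]G$ by $\|\nabla u\|_{L^\infty}\|G\|_{B^s_{q,\infty}}$ and you invoke ``the regularity supplied by Theorem \ref{glohs}'' for $\theta$. Neither is available at this stage: Proposition \ref{smoth} is proved \emph{first}, using only the elementary bounds $\|\theta\|_{L^r}\le\|\theta_0\|_{L^r}$ and the $L^2$/$L^q$ bounds on $G$; only afterwards does one deduce $\|\nabla\widetilde{u}\|_{L^\infty}<\infty$ and then apply Theorem \ref{glohs} to get $\|\nabla\theta\|_{L^\infty}$ and eventually $\|\nabla u\|_{L^1_tL^\infty}$. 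At the point where Proposition \ref{smoth} is being proved, $G\in L^q$ with $q<q_0<\infty$ and $\theta\in L^2\cap L^\infty$ do not give $\nabla u\in L^\infty$. The paper avoids this by a five-part paraproduct decomposition of $\Delta_j(u\cdot\nabla G)$, bounding every piece by $C\,2^{(1-\alpha+\frac2q)j}\,\|\Lambda^\alpha u\|_{L^q}\cdot(\cdots)$ with $\|\Lambda^\alpha u\|_{L^q}\lesssim\|G\|_{L^{\widetilde q}}+\|\theta_0\|_{L^q}$; the loss $2^{(1-\alpha+\frac2q)j}$ is then absorbed by the dissipative factor $2^{\alpha j}$ exactly when $1-2\alpha+\frac2q<0$, which is where the constraint $q>\frac{2}{2\alpha-1}$ in \eqref{sqdef} comes from --- a constraint your proposal never accounts for.

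A second concrete error is your treatment of $\Lambda^{1-2\alpha}\partial_1\theta$: you bound it by $\|\theta\|_{B^{s+2-2\alpha}_{q,\infty}}$ and assert that $s+2-2\alpha\le\alpha$ is ``comfortably covered by temperature bounds.'' It is not: the only available bounds on $\theta$ here are $L^r$ bounds, i.e.\ zero derivatives, and $s+2-2\alpha>0$ for all $s>0$, $\alpha<1$. The correct mechanism is that $\|\Delta_j\Lambda^{1-2\alpha}\partial_1\theta\|_{L^q}\lesssim 2^{(2-2\alpha)j}\|\theta_0\|_{L^q}$, and the time integration against $e^{-c2^{\alpha j}(t-\tau)}$ (Duhamel, not a naive Gronwall on the Besov norm) converts $2^{(2-2\alpha)j}$ into $2^{(2-3\alpha)j}$, so that $\sup_j 2^{sj}2^{(2-3\alpha)j}<\infty$ forces $s\le 3\alpha-2$. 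Finally, closing the estimate requires the paper's splitting into $j\le j_0$ and $j>j_0$ with absorption of a small multiple of $\|G\|_{L^\infty(0,T;B^s_{q,\infty})}$ into the left-hand side; a Gronwall argument in the form you describe would need an integrable-in-time coefficient that is precisely the unavailable $\|\nabla u\|_{L^1_tL^\infty}$.
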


\vskip .1in
This proposition will be proven at the end of this section. With this proposition at our disposal, we are ready to prove Theorem \ref{main}.

\begin{proof}[Proof of Theorem \ref{main}]
The key part of the proof is to establish the global {\it a priori} bounds of the solution
in the functional setting (\ref{solutionclass}). According to Proposition \ref{smoth},
for any $T>0$, and $s$ and $q$ satisfying (\ref{sqdef}), we have
\begin{equation}\label{Greg}
G \equiv \om -\Lambda^{-\alpha} \pp_1 \theta \in L^\infty(0,T; B^s_{q,\infty}),
\end{equation}
Since $u=\na^\perp \Delta^{-1} \om$, we have
$$
u = \na^\perp \Delta^{-1}\,G + \na^\perp \Delta^{-1}\,\Lambda^{-\alpha} \pp_1 \theta
\equiv \widetilde{u} + v.
$$
Since $\alpha_0<\alpha<1$, we can choose $q$ satisfying (\ref{sqdef}) such that
$$
\frac{2}{q} - (3\alpha-2) <0.
$$
In fact, thanks to $\alpha_0<\alpha$, we have
$$
\frac{2}{3\alpha-2} < \frac{8-4\alpha}{8-7\alpha}
$$
and any $q$ satisfies
$$
\frac{2}{3\alpha-2} < q < q_0 \equiv \frac{8-4\alpha}{8-7\alpha}
$$
would work.  By the embedding $B^0_{\infty,1} \hookrightarrow L^\infty$, Bernstein's inequality and the
boundedness of Riesz transforms on $L^q$,
\begin{eqnarray*}
\|\na \widetilde{u}\|_{L^\infty} &\le& \sum_{j=-1}^\infty \|\na \na^\perp \Delta^{-1} \Delta_j G\|_{L^\infty} \\
&\le& C\, \sum_{j=-1}^\infty 2^{j \frac{2}{q}} \|\Delta_j G\|_{L^q}\\
&\le& \|G\|_{B^s_{q,\infty}}\, \sum_{j=-1}^\infty 2^{j (\frac{2}{q} -s)}.
\end{eqnarray*}
Taking $s \le 3\alpha-2$ but close to $3\alpha-2$, we have $\frac{2}{q} -s<0$ and thus
$$
M \equiv  \|\na \widetilde{u}\|_{L^\infty(0, T;L^\infty)} <\infty.
$$
Applying Theorem \ref{glohs} to the equation for $\theta$,
$$
\begin{cases}
\pp_t \th + u \cdot \na \th + \Lambda^{1-\alpha} \th =0, \\
u = \widetilde{u} + v, \quad v\equiv -\na^\perp \Delta^{-1}\,\Lambda^{-\alpha} \pp_1 \theta,
\end{cases}
$$
we obtain
\begin{equation} \label{nathbbb1}
\th\in C([0,T]; H^1), \quad \|\nabla \th\|_{L^\infty(0, T; L^\infty)} \le C(M, \|\th_0\|_{L^\infty}, \|\na \th_0\|_{L^\infty},T) <\infty.
\end{equation}
According to the vorticity equation
\begin{equation} \label{vvor}
\pp_t \om + u\cdot \na\om + \Lambda^\alpha \om = \pp_1\th,
\end{equation}
we have, thanks to $\om_0\in B^{\sigma-1}_{2,1}(\mathbb{R}^2)$ with $\sigma\ge \frac52$ and $B^1_{2,1}(\mathbb{R}^2)\hookrightarrow L^\infty(\mathbb{R}^2)$,
\begin{equation} \label{nathbbb2}
\|\om(t)\|_{L^\infty} \le \|\om_0\|_{L^\infty}
+ \int_0^T \|\pp_1\th(\tau)\|_{L^\infty}\,d\tau.
\end{equation}
(\ref{nathbbb1}) and (\ref{nathbbb2}) will lead
us to the global {\it a priori} bounds in (\ref{solutionclass}).
To this goal, we employ the
following estimate, for any integer $j\ge -1$ and any $t\in(0,T]$,
\begin{eqnarray}
\frac{2^{\alpha j}}{j+2} \int_0^t \|\Delta_j \om\|_{L^\infty} \,d\tau
&\le& C\,\left(\|\om_0\|_{L^\infty} + \int_0^t \|\pp_1\th(\tau)\|_{L^\infty}\,d\tau\right) \nonumber\\
&& \times \left(1+ t +\|u\|_{L^2} + \|\om\|_{L^1_tL^\infty}\right)\equiv h(t), \label{bddd}
\end{eqnarray}
where $C$ is a constant independent of $j$. (\ref{bddd}) can be established by applying
Besov space type estimates on (\ref{vvor}) and more details can be found
in \cite[p.134]{MWZ}. As a consequence, we have, for any $0<\alpha'<\alpha$,
\begin{eqnarray}
\int_0^t \sum_{j=-1}^\infty 2^{\alpha' j}\,\|\Delta_j \om\|_{L^\infty}\,d\tau
&=& \sum_{j=-1}^\infty 2^{\alpha' j}\,\int_0^t \|\Delta_j \om\|_{L^\infty}\,d\tau\nonumber\\
&\le& h(t) \, \sum_{j=-1}^\infty 2^{(\alpha'-\alpha) j}/(j+1) \le C\, h(t). \label{opop}
\end{eqnarray}
Therefore, by Bernstein's inequality and $\|\Delta_j \nabla u\|_{L^\infty} \le C\,\|\Delta_j \om\|_{L^\infty}$ for $j\ge 0$,
\begin{eqnarray}
\|\na u\|_{L^\infty} \le \|\Delta_{-1} \na u\|_{L^\infty} + \sum_{j=0}^\infty \|\Delta_j \nabla u\|_{L^\infty} \le C \|u\|_{L^2}
+ C\,\sum_{j=0}^\infty \|\Delta_j \om\|_{L^\infty}. \label{opop1}
\end{eqnarray}
Combining (\ref{opop}) and (\ref{opop1}) yields, for any $0<t\le T$,
\begin{eqnarray}
\int_0^t \|\nabla u(\tau)\|_{L^\infty}\,d\tau < \infty. \label{nuuu}
\end{eqnarray}
Through standard Besov space type estimates, we can show that
$$
Y(t) \equiv \|u(t)\|_{B^\sigma_{2,1}} + \|\th(t)\|_{B^2_{2,1}}
$$
obeys the differential inequality
$$
\frac{d}{dt} Y(t) + C\,(\|u\|_{B^{\sigma+\alpha}_{2,1}} + \|\th\|_{B^{2+\beta}_{2,1}})
\le C\,(1+ \|\na u(t)\|_{L^\infty} + \|\na\th(t)\|_{L^\infty}) Y(t).
$$
Gronwall's inequality together with (\ref{nathbbb1}) and (\ref{nuuu}) yields
the desired property in (\ref{solutionclass}). The continuity of $u$ and
$\th$ in time, namely $u\in C([0,T];B^\sigma_{2,1})$ and
$\th\in C([0,T];B^2_{2,1})$, follows from a
standard approach (see, e.g., \cite[p.111]{MB} or \cite[p.138]{BCD}).
We thus have obtained all the global {\it a priori} bounds in (\ref{solutionclass}).
The global existence part can then be obtained by first establishing
the local existence through a standard process such as the successive
approximation and then extending to all time via the global {\it a priori}
bounds. The uniqueness of solutions
in the functional setting (\ref{solutionclass}) is clear and we thus omit
the details. This
completes the proof of Theorem \ref{main}.
\end{proof}

\vskip .1in
We finally prove Proposition \ref{smoth}.

\begin{proof}[Proof of Proposition \ref{smoth}]
Recall that $G$ satisfies
\begin{equation} \label{Geq00}
\partial_t G + u\cdot\nabla G + \Lambda^\alpha G = [\mathcal{R}_\alpha, u\cdot\nabla] \theta + \Lambda^{1-2\alpha} \pp_1 \th.
\end{equation}
Let $j\ge -1$. Applying $\Delta_j$ to (\ref{Geq00}) and taking the inner product with $\Delta_j G |\Delta_j G|^{q-2}$, we obtain
\begin{equation}\label{qroot}
\frac1q \frac{d}{dt} \|\Delta_j G\|_{L^q}^q + \int \Delta_j G |\Delta_j G|^{q-2} \Lambda^\alpha \Delta_j G \,dx = K^{(j)}_1 + K^{(j)}_2+ K^{(j)}_3,
\end{equation}
where
\begin{eqnarray}
K^{(j)}_1 &=& \int \Delta_j G |\Delta_j G|^{q-2}\, \Delta_j [\mathcal{R}_\alpha, u\cdot\nabla] \theta \,dx,\nonumber\\
K^{(j)}_2 &=& - \int \Delta_j G |\Delta_j G|^{q-2} \Delta_j (u\cdot\nabla G)\,dx, \label{k2j}\\
K^{(j)}_3 &=& \int \Delta_j G |\Delta_j G|^{q-2} \Delta_j
(\Lambda^{1-2\alpha} \pp_1 \th)\,dx.\nonumber
\end{eqnarray}
For $j\ge 0$, the Fourier transform of $\Delta_j G$ is supported away from the origin and the dissipative part admits a lower bound,
\begin{equation}\label{qlow}
\int \Delta_j G |\Delta_j G|^{q-2} \Lambda^\alpha \Delta_j G \,dx  \ge C\, 2^{\alpha j} \|\Delta_j G\|_{L^q}^q
\end{equation}
for a constant $C$ that depends on $q$ and $\alpha$ only (see, e.g. \cite{CMZ,Wu1}). For $j=-1$, the dissipative part is still nonnegative and can be neglected.  By H\"{o}lder's inequality,
$$
|K^{(j)}_1| \le \|\Delta_j G\|_{L^q}^{q-1} \, \|\Delta_j [\mathcal{R}_\alpha, u\cdot\nabla] \theta\|_{L^q}.
$$
Furthermore, $\|\Delta_j [\mathcal{R}_\alpha, u\cdot\nabla] \theta\|_{L^q}$ can be estimated in a similar fashion as in the proof of Proposition \ref{cmmu} and is bounded by
$$
\|\Delta_j [\mathcal{R}_\alpha, u\cdot\nabla] \theta\|_{L^q} \le C \, 2^{(2-2\alpha) j}\, \|u\|_{\mathring{B}^\alpha_{q,\infty}}\, \|\theta\|_{L^\infty} + \,C\,\|u\|_{L^2}\,\|\theta\|_{L^2}.
$$
In addition, as in (\ref{lau}),
\begin{eqnarray}
\|u\|_{\mathring{B}^\alpha_{q,\infty}}\, &\le&C\, \|\Lambda^\alpha u\|_{L^q} \le C\,\|\Lambda^{\alpha-1} G\|_{L^q} + C\, \|\theta\|_{L^q} \nonumber \\
&\le&  C\, \|G\|_{L^{\widetilde{q}}} \, +\,  C\, \|\theta_0\|_{L^q}, \label{lamu}
\end{eqnarray}
where $\frac1{\widetilde{q}}=\frac1{q} -\frac{\alpha}{2} +\frac12$. For $\alpha\in (\alpha_0,1)$, we have $2<\widetilde{q} <q$ and thus $\|G\|_{L^{\widetilde{q}}} <C$. Therefore,
\begin{equation}\label{k1bb}
|K^{(j)}_1| \le  B(t)\, 2^{(2-2\alpha) j}\, \|\Delta_j G\|_{L^q}^{q-1},
\end{equation}
where $B_1(t)$ is a smooth function of $t$ that depends on the initial norms only.
To estimate $K^{(j)}_2$, we apply the notion of paraproducts to write
\begin{eqnarray}
\Delta_j (u\cdot\nabla G) = J_1 + J_2 + J_3 + J_4 + J_5,  \label{dec}
\end{eqnarray}
where
\begin{eqnarray*}
J_1 &=& \sum_{|j-k|\le 2} \left(\Delta_j(S_{k-1} u \cdot \nabla \Delta_k G) -S_{k-1} u \cdot \nabla \Delta_j\Delta_k G\right),\\
J_2  &=& \sum_{|j-k|\le 2} (S_{k-1} u - S_ju) \cdot\nabla \Delta_j\Delta_k G, \\
J_3  &=& S_j u\cdot\nabla \Delta_j G, \\
J_4 &=&  \sum_{|j-k|\le 2} \Delta_j (\Delta_k u\cdot\nabla S_{k-1} G),\\
J_5 &=& \sum_{k\ge j-1} \Delta_j(\Delta_k u \cdot\nabla \widetilde{\Delta}_k G)
\end{eqnarray*}
with $\widetilde{\Delta}_k=\Delta_{k-1} + \Delta_k + \Delta_{k+1}$. Inserting (\ref{dec}) in (\ref{k2j}) naturally splits the integral in $K_2^{(j)}$
into five parts
$K_2^{(j)} = K_{21}^{(j)} + K_{22}^{(j)} + K_{23}^{(j)} + K_{24}^{(j)} + K_{25}^{(j)}$. By H\"{o}lder's inequality,
$$
|K_{21}^{(j)}| \le \|\Delta_j G\|_{L^q}^{q-1}\, \sum_{|j-k|\le 2} \left\|(\Delta_j(S_{k-1} u \cdot \nabla \Delta_k G) -S_{k-1} u \cdot \nabla \Delta_j\Delta_k G)\right\|_{L^q}.
$$
Since the summation above is for $k$ satisfying $|j-k|\le 2$ and can be replaced by a constant multiple of the representative term with $k=j$, we obtain, by a standard commutator estimate and Bernstein's inequality,
\begin{eqnarray*}
|K_{21}^{(j)}| &\le& C\, \|\Delta_j G\|_q^{q-1}\, 2^{(1-\alpha)j}\,\|\Lambda^\alpha S_{j-1} u\|_{L^q}\, \|\Delta_j G\|_{L^\infty} \\
&\le& C\, \|\Delta_j G\|_q^{q-1}\, 2^{(1-\alpha+\frac{2}{q})j}\,\|\Lambda^\alpha u\|_{L^q}\, \|\Delta_j G\|_{L^q}\\
&\le& C\, 2^{(1-\alpha+\frac{2}{q})j}\,\|\Lambda^\alpha u\|_{L^q}\, \|\Delta_j G\|_{L^q}^q.
\end{eqnarray*}
The second term $K_{22}^{(j)}$ can be bounded by
\begin{eqnarray*}
|K_{22}^{(j)}| &\le& C\, \|\Delta_j G\|_{L^q}^{q-1}\,\|\Delta_j u\|_{L^q} \, 2^j \|\Delta_j G\|_{L^\infty}. \\
&\le& C\,2^{(1-\alpha+\frac{2}{q})j}\, \|\Lambda^\alpha\Delta_j u\|_{L^q} \,\|\Delta_j G\|_{L^q}^q,
\end{eqnarray*}
where we have used the lower bound part of the Bernstein inequality
$$
2^{\alpha j}\,\|\Delta_j u\|_{L^q} \le C\,\|\Lambda^\alpha\Delta_j u\|_{L^q}.
$$
This inequality is valid for  $j \ge 0$. In the case when $j=-1$, this inequality is not needed and it suffices to apply the upper bound part of the Bernstein inequality
$$
\|\Delta_{-1} u\|_{L^q}  \le C\, \|\Delta_{-1} u\|_{L^2}.
$$
Due to the divergence-free condition $\nabla\cdot u =0$, we have $K_{23}^{(j)}=0$. By H\"{o}lder's inequality and Bernstein's inequality
\begin{eqnarray*}
|K_{24}^{(j)}| &\le&  C \, \|\Delta_j G\|_{L^q}^{q-1}\,
\|\Delta_j u\cdot\nabla S_{j-1} G\|_{L^q} \\
&\le&  C \, 2^{-\alpha j}\, \|\Delta_j G\|_{L^q}^{q-1}\,\|\Lambda^\alpha \Delta_j u\|_{L^q}\, \sum_{m\le j-1} 2^{(1+\frac2q)m}\, \|\Delta_m G\|_{L^q} \\
&\le&  C\,2^{(1-\alpha+\frac{2}{q})j}\,\|\Delta_j G\|_{L^q}^{q-1}\,\|\Lambda^\alpha \Delta_j u\|_{L^q}\, \sum_{m\le j-1} 2^{(1+\frac2q)(m-j)}\, \|\Delta_m G\|_{L^q}.
\end{eqnarray*}
\begin{eqnarray*}
|K_{25}^{(j)}| &\le&  C \, \|\Delta_j G\|_{L^q}^{q-1}\, \sum_{k\ge j-1}  2^j \|\Delta_j (\Delta_k u \widetilde{\Delta}_k G)\|_{L^q} \\
 &\le&  C \, 2^{(1-\alpha+\frac{2}{q})j}\,\|\Delta_j G\|_{L^q}^{q-1}\, \sum_{k\ge j-1}  2^{(j-k)(\alpha-\frac2q)} \,\|\Lambda^\alpha \Delta_k u\|_{L^q} \, \|\widetilde{\Delta}_k G\|_{L^q}.
\end{eqnarray*}
Collecting the estimates for $K_2^{(j)}$ and bounding $\|\Lambda^\alpha u\|_{L^q}$ as in \eqref{lamu}, we have
\begin{eqnarray}
|K_2^{(j)}| &\le&  C\,2^{(1-\alpha+\frac{2}{q})j}\,\|\Delta_j G\|_{L^q}^{q-1}\, \Big[\|\Delta_j G\|_{L^q} + \sum_{m\le j-1} 2^{(1+\frac2q)(m-j)}\, \|\Delta_m G\|_{L^q}
\nonumber\\
&&+ \sum_{k\ge j-1}  2^{(j-k)(\alpha-\frac2q)}  \, \|\Delta_k G\|_{L^q}\Big]. \label{k2bb}
\end{eqnarray}
By H\"{o}lder's inequality and Bernstein's inequality,
\begin{eqnarray}
|K_3^{(j)}| \le 2^{2(1-\alpha)j} \, \|\Delta_j G\|_{L^q}^{q-1}\, \|\Delta_j \th\|_{L^q}
\le 2^{2(1-\alpha)j} \, \|\Delta_j G\|_{L^q}^{q-1}\, \|\th_0\|_{L^q}.\label{k3bb}
\end{eqnarray}
Combining (\ref{qroot}), (\ref{qlow}), (\ref{k1bb}), (\ref{k2bb})
and (\ref{k3bb}), we obtain
\begin{eqnarray*}
&& \frac{d}{dt} \|\Delta_j G\|_{L^q} + C\, 2^{\alpha j} \|\Delta_j G\|_{L^q}  \le C\, B(t)\,2^{(2-2\alpha) j}\,+ \, C\,2^{(1-\alpha+\frac{2}{q})j}\, L(t),
\end{eqnarray*}
where $B(t)$ is a smooth function of $t$ depending on the initial norms only and
\begin{eqnarray*}
L=\|\Delta_j G\|_{L^q}+ \sum_{m\le j-1} 2^{(1+\frac2q)(m-j)}\, \|\Delta_m G\|_{L^q} +  \sum_{k\ge j-1}  2^{(j-k)(\alpha-\frac2q)}  \, \|\Delta_k G\|_{L^q}.
\end{eqnarray*}
Integrating in time leads to
\begin{eqnarray*}
\|\Delta_j G(t)\|_{L^q} &\le& e^{-C 2^{\alpha j} t}\, \|\Delta_j G(0)\|_{L^q} + C\, \widetilde{B}(t) \,2^{(2-3\alpha) j} \\
&& + \,C\,2^{(1-\alpha+\frac{2}{q})j}\, \int_0^t e^{-C 2^{\alpha j} (t-\tau)} L(\tau)\,d\tau,
\end{eqnarray*}
where $\widetilde{B}(t)$ is a smooth function of $t$ depending on the initial norms only. Multiplying each side by $2^{sj}$ with
$0< s\le 3\alpha-2$ and taking sup with respect to $j$, we find that
\begin{eqnarray}
\|G(t)\|_{B^{s}_{q,\infty}} \le \|G(0)\|_{B^{s}_{q,\infty}} + C\,\widetilde{B}(t) + L_1 + L_2 + L_3,
\label{ges}
\end{eqnarray}
where
\begin{eqnarray*}
&& L_1 = \sup_{j\ge -1} \,C\,2^{(1-\alpha+\frac{2}{q})j}\, \int_0^t e^{-C 2^{\alpha j} (t-\tau)} 2^{sj} \|\Delta_j G(\tau)\|_{L^q} \,d\tau, \\
&& L_2 = \sup_{j\ge -1} \,C\,2^{(1-\alpha+\frac{2}{q})j}\, \int_0^t e^{-C 2^{\alpha j} (t-\tau)} 2^{sj} \sum_{m\le j-1} 2^{(1+\frac2q)(m-j)}\, \|\Delta_m G\|_{L^q}\,d\tau, \\
&& L_3 = \sup_{j\ge -1} \,C\,2^{(1-\alpha+\frac{2}{q})j}\, \int_0^t e^{-C 2^{\alpha j} (t-\tau)} 2^{sj} \sum_{k\ge j-1}  2^{(j-k)(\alpha-\frac2q)}  \, \|\Delta_k G\|_{L^q}\,d\tau.
\end{eqnarray*}
Since $1-2\alpha +\frac{2}{q} <0$,  we choose an integer $j_0$ such that, for $j \ge j_0$,
$$
C\, 2^{(1-2\alpha+\frac{2}{q})j} \le \frac18.
$$
Then
\begin{eqnarray*}
L_1 &\le& \sup_{-1 \le j\le j_0} \,C\,2^{(1-\alpha+\frac{2}{q})j}\, \int_0^t e^{-C 2^{\alpha j} (t-\tau)} 2^{sj} \|\Delta_j G(\tau)\|_{L^q} \,d\tau \\
&& + \, \sup_{j_0+1\le j<\infty} \,C\,2^{(1-\alpha+\frac{2}{q})j}\, \int_0^t e^{-C 2^{\alpha j} (t-\tau)} 2^{sj} \|\Delta_j G(\tau)\|_{L^q} \,d\tau\\
&\le& C\,\|G\|_{L^\infty(0,T; L^q)}\, 2^{(1-2\alpha+\frac{2}{q}+s)j_0} + \frac18 \|G(t)\|_{L^\infty(0,T; B^{s}_{q,\infty})}
\end{eqnarray*}
$L_2$ and $L_3$ obey the same bound. Inserting these bounds in (\ref{ges}), we find
$$
\|G\|_{L^\infty(0,T; B^{s}_{q,\infty})}
\le C\,(1+\|G(0)\|_{B^{s}_{q,\infty}}) + C\,\|G\|_{L^\infty(0,T; L^q)},
$$
where we have written $C$ for $\max_{0\le t\le T}\widetilde{B}(t)$. Since $u_0 \in B^\sigma_{2,1}$ with $\sigma\ge \frac52$ and $\th_0\in B^2_{2,1}$,
$$
\om_0 \in B^{\sigma-1}_{2,1}(\mathbb{R}^2)\hookrightarrow B^s_{q,\infty}(\mathbb{R}^2), \quad G(0) =\om_0 - \Lambda^{-\alpha} \pp_1 \th_0 \in B^{s}_{q,\infty}.
$$
This finishes the
proof of Proposition \ref{smoth}.
\end{proof}

\vskip .4in
\section*{Acknowledgements}
Jiu was supported by NSF of China under grants No.11171229 and No.11231006. Miao was supported by NSF of China under grants No.11171033 and No.11231006.   Wu was partially supported by NSF grant DMS1209153. Zhang was supported by NSF of China under grants No.10990013 and No.11071007, Program for New Century University Talents and Fok Ying Tung Education Foundation. In addition, Jiu and Wu were partially supported by a special grant from NSF of China under No.11228102.

\vskip .4in

\end{document}